\documentclass[11pt]{article}

\usepackage[top=1in,right=1in,left=1in,bottom=1in]{geometry}
\usepackage{amsthm,amsmath,amsfonts,amssymb,colortbl,float,graphicx}
\usepackage[numbers,sort&compress]{natbib}
\usepackage{enumerate,array,multirow,bbm}
\usepackage{booktabs,siunitx}
\usepackage{xcolor}

\newtheorem{Theorem}{Theorem}[section]

\newtheorem{Proposition}{Proposition}[section]
\newtheorem{Remark}{Remark}[section]
\newtheorem{Lemma}{Lemma}[section]

\numberwithin{equation}{section}

\newcommand{\bx}{\textbf{x}}

\begin{document}

\title{A Uniformly Accurate Multiscale Time Integrator for the Klein-Gordon-Schr\"odinger Equations in the Nonrelativistic Regime via Simplified Transmission Conditions}

\author{Yue Feng \footnote{School of Mathematics and Statistics,
  Xi'an Jiaotong University,
  Xi'an 710049, P. R. China. yue.feng@xjtu.edu.cn.}
	~~~and~~~
	Caoyi Liu \footnote{Corresponding author. Department of Mathematics, National University of Singapore, Singapore 119076, Singapore. e1353374@u.nus.edu.}
}

\maketitle

\begin{abstract}
We propose a novel and simplified multiscale time integrator Fourier pseudospectral (MTI-FP) method for the Klein-Gordon-Schr\"odinger (KGS) equations with a dimensionless parameter $ \varepsilon \in (0,1]$, where $\varepsilon$ is inversely proportional to the speed of light. The proposed MTI-FP method is rigorously proved to achieve uniform first-order accuracy in time in the nonrelativistic regime, i.e., as $\varepsilon \to 0^+$. In this regime, the solution of the KGS equations exhibits temporal oscillations with an $O(\varepsilon^2)$-wavelength, imposing stringent resolution requirements on classical numerical methods. The uniformly accurate MTI-FP method is built upon two key points: (i) a multiscale decomposition by frequency in each time interval with simplified transmission conditions, and (ii) an exponential integrator for temporal discretization combined with the Fourier pseudospectral method for spatial discretization. Using the energy method and mathematical induction, we rigorously establish two independent error bounds in $H^1$-norm at $O(h^{m_0 -1} + \tau^2 / \varepsilon^2 )$ and $O(h^{m_0 -1} +  \varepsilon^2)$ with mesh size $h$, time step $\tau$ and $m_0$ an integer dependent on the regularity of the solution. These estimates imply that the MTI-FP method converges uniformly and optimally in space, and uniformly in time at $O(\tau)$ with respect to $\varepsilon \in (0, 1]$. Furthermore, by incorporating a linear interpolation of the micro-variables with the multiscale decomposition in each time interval, we obtain a uniformly accurate numerical solution for any time $t>0$. Consequently, the proposed MTI-FP method has a super-resolution property in time from the perspective of Shannon sampling theory. Ample numerical experiments are provided to validate the error estimates and to demonstrate the super-resolution property. Finally, the method is applied to numerically investigate the convergence rates of the KGS equations to different limiting models.

\medskip
\textbf{Keywords:}
{Klein-Gordon-Schr\"odinger equations; nonrelativistic limit; highly oscillatory; multiscale decomposition; simplified transmission conditions; uniform accuracy}

\medskip
\textbf{AMS Subject Classification: 65M12, 65M15, 65M70, 35B25, 35Q55}

\end{abstract}

\section{Introduction}\label{sec:intro}
In this paper, we consider the following coupled Klein-Gordon-Schr\"odinger (KGS) equations in $d$-dimensions ($d = 3, 2, 1$) which are used as a mean-field model for nucleon-meson interaction through the Yukawa coupling \cite{fukuda1978coupled,yukawa1935interaction}
\begin{equation}
\label{KGSdim}
\left\{
\begin{aligned}
& i \hbar \partial_t \psi (\bx,t) + \frac{\hbar^2}{2 m_1} \Delta \psi(\bx,t) + \eta \phi(\bx,t) \psi(\bx,t) = 0, \quad \bx \in \mathbb{R}^d, \quad d = 1,2,3, \\
& \frac{1}{c^2} \partial_{tt} \phi (\bx,t) - \Delta \phi(\bx,t) + \frac{m_2^2 c^2}{\hbar^2 } \phi(\bx,t) - \eta|\psi(\bx,t)|^2 = 0, \quad t >0,
\end{aligned}\right.
\end{equation}
where $\psi(\bx,t)$ represents a complex scalar nucleon field and $\phi(\bx,t)$ is a real scalar meson field. Here, $\bx \in \mathbb{R}^d$ is the spatial coordinate, $t$ is time,  $\hbar$ is the Planck constant, $0 \neq\eta \in \mathbb{R}$ is the Yukawa coupling constant, $c$ is the speed of light and $m_1$, $m_2>0$ are the mass of the nucleon and the meson, respectively. The KGS system is widely used in many physical fields, including the quantum field theory \cite{fukuda1978coupled,yukawa1935interaction}, plasmas physics \cite{hasegawa1995solitons,shukla2010nonlinear} and nonlinear optics \cite{kivshar2003optical}.  

In order to non-dimensionalize the KGS system \eqref{KGSdim}, we introduce
\begin{equation}
\tilde \bx = \frac{\bx}{x_s},\qquad
\tilde t = \frac{t}{t_s}, \qquad \tilde \psi(\tilde \bx,\tilde t) = x_s^{d/2} \psi(\bx,t), \qquad \tilde \phi(\tilde \bx,\tilde t) =  \displaystyle \frac{\phi(\bx,t)}{\phi_s}, \label{dim}
\end{equation}
where $x_s$, $t_s = \frac{2 m_1 x_s^2}{\hbar}$ and $\phi_s = \frac{\hbar x_s^{-d/2}}{\sqrt{2 m_1}}$ are the length unit, time unit and meson field unit, respectively. Plugging \eqref{dim} into \eqref{KGSdim} and removing all `$\sim$', we obtain the following dimensionless KGS equations in $d$-dimensions ($d = 3, 2, 1$):
\begin{equation}
\label{KGSndim}
\left\{\begin{aligned}
 & i \partial_t \psi(\bx,t) + \Delta \psi(\bx,t) + \lambda \phi(\bx,t) \psi(\bx,t) = 0, \quad \bx \in \mathbb{R}^d, \quad t > 0, \\
        & \varepsilon^2 \partial_{tt} \phi(\bx,t) - \Delta \phi(\bx,t) +  \frac{\mu^2}{\varepsilon^2} \phi(\bx,t) - \lambda | \psi(\bx,t)|^2 =0, 
\end{aligned}\right.
\end{equation}
where $\mu := \frac{m_2}{2 m_1}>0$ is the mass ratio, $\lambda = \frac{\eta \sqrt{2 m_1} x_s^{2-d/2}}{\hbar}$ and $\varepsilon := \frac{v}{c} = \frac{\hbar}{2cm_1x_s}$ is the ratio of the wave speed $v = \frac{x_s}{t_s} = \frac{\hbar}{2 m_1 x_s}$ and the speed of light. 
We remark here that the choice of the length unit $x_s$ is crucial to the nondimensionalization of the KGS system \eqref{KGSndim}, decides the observation scale of the time evolution of the particles, and determines (i) which phenomenon can be resolved numerically on prescribed spatial-temporal grids, and (ii) which dynamical features can be captured by asymptotic analysis. In particular, taking $x_s= \frac{\hbar}{2c m_1}$ ($\Longleftrightarrow  \varepsilon = 1$) corresponds to the classical regime, where the wave speed is of the same order as the speed of light $c$. On the other hand, when the wave speed is much smaller than the speed of light, it is more appropriate to select a different length scale $x_s$ such that $0<\varepsilon\ll1$, leading to the nonrelativistic limit regime.  

To study the dynamics of the KGS equations \eqref{KGSndim}, the initial data is usually given as
\begin{equation}
    \psi(\bx,0) = \psi_0(\bx),\quad \phi(\bx,0) = \phi_0(\bx), \quad \partial_t \phi(\bx,0) = \displaystyle \frac{1}{\varepsilon^2} \phi_1(\bx), \quad \bx \in \mathbb{R}^d, 
\label{KGSini}
\end{equation}
where the complex-valued function $\psi_0$ and the real-valued functions $\phi_0$ and $\phi_1$ are independent of $\varepsilon$. The KGS system \eqref{KGSndim} is dispersive, time symmetric and conserves the {\sl mass} of the nucleon field
\begin{align}
\mathcal{M}(t) = \left\| \psi(\cdot,t) \right\|_{L^2}^2 := \int_{\mathbb{R}^d} |\psi(\bx,t)|^2 d \bx \equiv  \int_{\mathbb{R}^d} |\psi_0(\bx)|^2 d \bx, \quad t \geq 0,
\end{align}
and the {\sl Hamiltonian} or total {\sl energy}
\begin{align}
\label{energy}
\mathcal{E}(t)
&:= \int_{\mathbb{R}^d} 
\left[ \frac12 \left( \varepsilon^2 |\partial_t \phi|^2
      + |\nabla \phi|^2 
      + \frac{\mu^2}{\varepsilon^2} |\phi|^2 \right)
      + |\nabla \psi|^2 - \lambda |\psi|^2 \phi \right] \, \mathrm{d} \bx \nonumber \\
&\equiv \int_{\mathbb{R}^d}
\left[ \frac12 \left( \frac{1}{\varepsilon^2} |\phi_1|^2
      + |\nabla \phi_0|^2
      + \frac{\mu^2}{\varepsilon^2} |\phi_0|^2 \right)
      + |\nabla \psi_0|^2 - \lambda |\psi_0|^2 \phi_0 \right] \, \mathrm{d} \bx \nonumber \\
& = \mathcal{E} (0), \qquad t \ge 0 . 
\end{align}

In the classical regime, i.e., $\varepsilon = O(1)$, the KGS equations \eqref{KGSndim} have been well studied both analytically and numerically. For well-posedness and regularity, we refer to literatures \cite{biler1990attractors,fukuda1978coupled,fukuda1975yukawa,guo1982global,boling1997attractor,guo1995global,hayashi1987global} and the references therein. For numerical approximations, many efficient and accurate numerical methods have been proposed and analyzed, including the finite difference method \cite{pan2013high,wang2014optimal,wang2018unconditional,zhang2010finite,zhang2005convergence}, the time-splitting method \cite{bao2007efficient}, the symplectic and multi-symplectic methods \cite{hong2009explicit,kong2013multisymplectic,kong2010semi}, the Chebyshev pseudospectral multidomain method \cite{dehghan2012numerical} and the Galerkin finite element methods \cite{yang2021unconditional}.

However, in the nonrelativistic limit regime, i.e., $0 < \varepsilon \ll 1$, the analysis and numerical computation of the KGS system \eqref{KGSndim} become significantly more complicated. The main difficulty arises from the fact that the solution exhibits highly oscillatory behavior in time, and the energy $\mathcal{E}(t) = O(\varepsilon^{-2})$ in \eqref{energy} becomes unbounded as $\varepsilon \to 0$. Taking into account the attractors and asymptotic behavior of this system in the nonrelativistic regime \cite{biler1990attractors,guo1995global,li2003asymptotic,lu2001global,ozawa1994asymptotic}, the solution of the Klein-Gordon equation can be decomposed as \cite{bao2012uniformly,machihara2002nonrelativistic,masmoudi2002nonlinear}
\begin{equation}
    \phi(\bx,t) = e^{i\mu t / \varepsilon^2} v(\bx,t) + e^{-i\mu t/ \varepsilon^2} \overline{v(\bx,t)} + o(\varepsilon), \quad \bx \in \mathbb{R}^d, \quad t \ge 0, \label{ansatzRd}
\end{equation}
where $\bar{v}$ denotes the complex conjugate of a complex-valued function $v$. By plugging \eqref{ansatzRd} into \eqref{KGSndim}, $(\psi,v)$ satisfies the Sch\"{o}dinger equations as a limiting model
\begin{equation}
\label{lim}
\left\{\begin{aligned}
& i \partial_t \psi(\bx,t) + \Delta \psi(\bx,t) + \lambda \left( e^{i\mu t / \varepsilon^2} v(\bx,t) + e^{-i\mu t/ \varepsilon^2} \overline{v(\bx,t)} \right) \psi(\bx,t) = 0,  \\
& 2i \partial_t v(\bx,t)  - \Delta v(\bx,t) =0, \quad \bx \in \mathbb{R}^d, \quad t > 0,
\end{aligned}\right.
\end{equation}
with initial data
\begin{align}
    \psi(\bx,0) = \psi_0(\bx), \qquad v(\bx,0) = \frac{1}{2}\Bigl[ \phi_0 (\bx) - \frac{i}{\mu} \phi_1(\bx) \Bigr], \label{lim_initial}
\end{align} 
or the Sch\"{o}dinger equations with wave operator as a semi-limiting model
\begin{equation}
\label{slim}
\left\{\begin{aligned}
& i \partial_t \psi(\bx,t) + \Delta \psi(\bx,t) + \lambda \left( e^{i\mu t / \varepsilon^2} v(\bx,t) + e^{-i\mu t/ \varepsilon^2} \overline{v(\bx,t)} \right) \psi(\bx,t) = 0,  \\
& 2i \partial_t v(\bx,t) + \varepsilon^2 \partial_{tt} v(\bx,t) - \Delta v(\bx,t) =0,\quad \bx \in \mathbb{R}^d, \quad t > 0, 
    \end{aligned}\right.
\end{equation}
with initial data
\begin{align}
\psi(\bx,0) = \psi_0(\bx), \quad v(\bx,0) = \frac{1}{2}\Bigl[ \phi_0 (\bx) - \frac{i}{\mu} \phi_1(\bx) \Bigr], \quad \partial_t v(\bx,0) = \gamma(\bx), \label{slim_initial}
\end{align}
where $\gamma(\bx)$ is chosen as the well-prepared initial data in the literatures \cite{bao2017uniformly}.

Based on these results, it is clear that, in the nonrelativistic regime, the solution $\phi(\bx,t)$ of the second equation in \eqref{KGSndim} exhibits highly oscillatory  behavior in time with $O(\varepsilon^2)$-wavelength. This poses significant challenges for the design and analysis of efficient and accurate numerical methods for the KGS system \eqref{KGSndim}-\eqref{KGSini}. In this regime, classical numerical methods require a prohibitively fine meshing strategy (or $\varepsilon$-scalability) to correctly resolve the oscillatory solution when $0 <\varepsilon \ll 1$. For example, finite difference time domain (FDTD) methods perform well when $\varepsilon = O(1)$, but in the nonrelativistic regime they necessitate a meshing strategy with $h = O(1)$ and $\tau = O(\varepsilon^3)$ \cite{su2018error}. Consequently, these methods are {\bf under-resolution} in time according to Shannon sampling theory, as they require $O(\varepsilon^{-1})\gg1$ grid points per wave when $0<\varepsilon\ll1$. In recent years, several uniformly accurate numerical methods have been proposed and analyzed, including multiscale time integrators (MTI) \cite{bao2017uniformly}, two-scale formulation (TSF) methods \cite{chartier2015uniformly}, asymptotic consistent exponential-type integrators \cite{baumstark2018asymptotic}, nested Picard iterative integrators (NPI) \cite{cai2023uniformly} and uniformly accurate integrators (UAI) \cite{calvo2023uniformly}. All of these methods share the same meshing strategy with $h=O(1)$ and $\tau=O(1)$ in the nonrelativistic regime, and thus they are {\bf super-resolution} methods in time according to Shannon sampling theory. Therefore, they yield accurate solutions even when the time step is much larger than the temporal wavelength of $O(\varepsilon^2)$.

The main aim of this paper is to propose a novel and simplified multiscale time integrator Fourier pseudospectral (MTI-FP) method for the KGS system \eqref{KGSndim}-\eqref{KGSini} by adapting a multiscale decomposition by frequency in each time interval with simplified transmission conditions. We carry out two error bounds in $H^1$-norm at $O(h^{m_0 -1} + \tau^2 / \varepsilon^2 )$ and $O(h^{m_0 -1}  + \varepsilon^2)$ with $m_0$ an integer dependent on the regularity of the solutions by adapting two distinct analytical techniques, i.e., the energy method and the mathematical induction, respectively. From these two independent error bounds, we immediately obtain a uniformly accurate error bound at $O(h^{m_0 -1} + \tau)$ with respect to $\varepsilon\in(0,1]$, which indicates that the proposed MTI-FP method possesses super-resolution property in time. Compared with the multiscale time integrators (MTIs) in the literature \cite{bao2017uniformly}, the proposed MTI-FP offers three main advantages: (i) the numerical scheme is significantly simplified, which greatly reduces the computational cost in practice; (ii)
the regularity requirement on the solution is weakened while remaining the same uniform first-order error bound in time, and (iii) it achieves optimal spatial accuracy given the regularity of the solution. Furthermore, by performing a linear interpolation of the micro variables in the multiscale decomposition \eqref{ansatzRd} within each time interval, we obtain a uniformly accurate approximation of the solution for any $t\ge0$ with almost no additional computational cost. This post-processing technique for constructing numerical solutions at arbitrary time $t\ge0$ with almost no additional computational cost is only applied to the MTI-FP methods and does not apply to other uniformly accurate numerical methods, such as the TSF methods and the UAI found in the literature.

The remainder of this paper is organized as follows. In Section~\ref{sec:multi}, we introduce the multiscale decomposition for the KGS system \eqref{KGSndim}-\eqref{KGSini} with simplified transmission conditions. The corresponding MTI-FP method is proposed in Section~\ref{sec:MTI} and its rigorous error estimates are established in Section~\ref{sec:unierr}. In Section~\ref{sec:interpolation}, we present and analyze a multiscale interpolation for global time $t > 0$. Numerical results are reported in Section~\ref{sec:numerical} to verify our error bounds and to numerically demonstrate the convergence rates of the KGS to its different limiting models. Finally, conclusions are drawn in Section~\ref{sec:conclusion}. Throughout this paper, we adopt the standard Sobolev spaces as well as their corresponding norms, use c.c. to denote complex conjugate of the term in front of it,  and adopt $A \lesssim B$ to represent that there exists a generic constant $C > 0$ independent of $\varepsilon$, $h$ and $\tau$, such that $|A| \le CB$.


\section{Multiscale decomposition by amplitude and /or frequency}\label{sec:multi}

In this section, we present a multiscale decomposition of the KGS system \eqref{KGSndim}-\eqref{KGSini}. Similar to the asymptotic behavior \eqref{ansatzRd}, we make the following ansatz \cite{bao2017uniformly}
\begin{align}
\phi(\bx,t) = e^{i\mu t / \varepsilon^2} v(\bx,t) + e^{-i\mu t/ \varepsilon^2} \overline{v(\bx,t)} + r(\bx,t), \quad \bx \in \mathbb{R}^d, \quad t \ge 0, \label{ansatz}
\end{align}
and plug it into the nonlinear Klein-Gordon equation and the initial data \eqref{KGSini}. After some computation, we obtain the following equation
\begin{align}
    0 = & \ \varepsilon^2 \partial_{tt} \phi(\bx,t) - \Delta \phi(\bx,t) + \frac{\mu^2}{\varepsilon^2}\phi(\bx,t) - \lambda |\psi(\bx,t)|^2 \nonumber \\
    = &\ e^{i\mu t / \varepsilon^2} \left[ \varepsilon^2 \partial_{tt} v(\bx,t) + 2i \mu \partial_t v(\bx,t) - \Delta v(\bx,t) \right] \nonumber \\
     &\ +  e^{-i\mu t / \varepsilon^2} \left[ \varepsilon^2  \overline{\partial_{tt} v(\bx,t)} - 2i \mu  \overline{ \partial_t v(\bx,t) }-  \overline{\Delta v(\bx,t)}\right]  \nonumber \\
    &\ + \varepsilon^2 \partial_{tt} r(\bx,t) - \Delta r(\bx,t) + \frac{\mu^2}{\varepsilon^2} r(\bx,t) - \lambda |\psi(\bx,t)|^2, \label{decom}
\end{align}
and the initial conditions
\begin{equation}
\label{deci}
 \left\{   \begin{aligned}
        & \phi_0(\bx) = v(\bx,0) + \overline{v(\bx,0)} + r(\bx,0) , \qquad \bx \in \mathbb{R}^d, \\
        & \frac{1}{\varepsilon^2} \phi_1(\bx) = \frac{i\mu}{\varepsilon^2} \left( v(\bx,0) - \overline{v(\bx,0)} \right) + \partial_t v(\bx,0) + \overline{\partial_t v(\bx,0)} + \partial_t r(\bx,0). 
    \end{aligned}\right.
\end{equation}

\noindent In order to make the remainder term $r$ small, we request
\begin{align}
r(\bx,0) = 0  \quad \Longrightarrow \quad v(\bx,0) + \overline{v(\bx,0)} = \phi_0(\bx) =O(1), \quad \bx \in \mathbb{R}^d. \label{scale1}
\end{align}

\noindent Similar to the scale separation in \cite{bao2012uniformly}, we let $\varepsilon \to 0$ in \eqref{deci} and obtain
\begin{align}
i \mu \left( v(\bx,0) - \overline{v(\bx,0)} \right) = \phi_1 (\bx), \qquad \partial_t r(\bx,0) + \partial_t v(\bx,0) + \overline{\partial_t v(\bx,0)} = 0. \label{scale2}
\end{align}

\noindent Combining \eqref{scale1} and \eqref{scale2}, we get
\begin{align}
 &   v(\bx,0) = \frac{1}{2} \left[ \phi_0(\bx) - \frac{i}{\mu} \phi_1(\bx) \right] :=v_0(\bx), \qquad \bx \in \mathbb{R}^d, \label{vi}\\
 & r(\bx,0) = 0, \qquad \partial_t r(\bx,0) = -\partial_t v(\bx,0) - \overline{\partial_t v(\bx,0)}.\label{ri}
\end{align}

By adopting a multiscale decomposition by amplitude with $O(1)$ and frequency with $\varepsilon^{-2}$ in \eqref{decom}, we request that $v : = v(\bx,t)$ satisfies the following Schr\"{o}dinger equation with wave operator as
\begin{equation}
\label{v}
\left\{\begin{aligned}
& \varepsilon^2 \partial_{tt} v(\bx,t) + 2i \mu \partial_t v(\bx,t) - \Delta v(\bx,t)=0, \quad \bx \in \mathbb{R}
^d, \quad t >0, \\
& v(\bx,0) = v_0 (\bx) = \frac{1}{2} \left[ \phi_0(\bx) - \frac{i}{\mu} \phi_1(\bx) \right], \quad \partial_t v(\bx,0) = \gamma (\bx),
\end{aligned}\right.
\end{equation}
and the remainder $r:=r(\bx,t)$ satisfies the nonlinear Klein-Gordon equation with source terms and small initial data
\begin{equation}
\label{r}
\left\{\begin{aligned}
&  \varepsilon^2 \partial_{tt}r(\bx,t) -\Delta r(\bx,t) +\frac{\mu^2}{\varepsilon^2} r(\bx,t) -\lambda |\psi(\bx,t)|^2=0,\quad \bx \in \mathbb{R}^d, \quad t >0, \\
    & r(\bx,0) = 0,\quad \partial_t r(\bx,0) = -\gamma(\bx) - \overline{\gamma(\bx)}, 
    \end{aligned}\right.
\end{equation}
where $\gamma(\bx)$ is a given complex-valued function to be specified later. It is easy to check that, if $v$ and $r$ are solutions of \eqref{v} and \eqref{r}, respectively, then $\phi$ in \eqref{ansatz} is a solution of the nonlinear Klein-Gordon equation. In this case, the nonlinear Schr\"odinger equation in \eqref{KGSndim} for $\psi := \psi(\bx, t)$ could be rewritten as
\begin{equation}
\label{psi}
\left\{
\begin{aligned}
   & i \partial_t \psi + \Delta \psi + \lambda \left[e^{i\mu t / \varepsilon^2} v(\bx,t) + {\rm c.c.} + r(\bx,t) \right] \psi = 0, \quad t > 0, \\
   & \psi(\bx,0) = \psi_0(\bx),\quad \bx \in \mathbb{R}^d.    
   \end{aligned}\right.
\end{equation}

Different $\gamma(\bx)$ can be used in the multiscale decomposition by frequency \eqref{v} and \eqref{r}.
In the reference \cite{bao2017uniformly}, $\gamma(\bx)$ is taken as the well-prepared initial data, i.e. it is obtained by 
setting $t=0$ and $\varepsilon \to 0$ in \eqref{v}, as
\begin{align}
\gamma(\bx)  = -\frac{i}{2\mu}\Delta v(\bx,0) = -\frac{i}{2\mu}\Delta v_0(\bx) :=v_1(\bx), \quad \bx \in \mathbb{R}^d.\label{wellprepared}
\end{align}
Under the well-prepared initial data for \eqref{v}, formally one can show that
$v(\bx,t)$, $\partial_t v(\bx,t)$ and $\partial_{tt} v(\bx,t)$ are uniformly bounded with respect to $\varepsilon\in(0,1]$ \cite{bao2017uniformly}. Based on this choice of $\gamma(\bx)$ for the multiscale decomposition by frequency \eqref{v} and \eqref{r}, a multiscale time integrator Fourier pseudospectral (MTI-FP) method was presented and analyzed in the literature \cite{bao2017uniformly}. 

On the other hand, we can take the most simplified initial condition as $\gamma(\bx)\equiv0$, which immediately implies $\partial_t v(\bx,0)=\partial_t r(\bx,0)\equiv 0$ in \eqref{v} and \eqref{r}, i.e., the initial conditions for $v$ and $r$ are taken as homogeneous except $v(\bx,0)$.
Based on this most simplified initial conditions, we will present a new and simplified MTI-FP method for the KGS system \eqref{KGSndim}-\eqref{KGSini}  and 
establish its uniform first-order accuracy in time with respect to $\varepsilon\in(0,1]$. Compared to the MTI-FP method in the literature \cite{bao2017uniformly}, 
due to the adopted homogeneous initial conditions, the proposed MTI-FP method is significantly simplified and thus the computational cost is greatly reduced in practical computation, and the regularity requirement is also weaker to obtain the same first-order error bound in time.


\section{A uniformly accurate MTI-FP method}\label{sec:MTI}
In this section, we present a multiscale time integrator Fourier pseudospectral (MTI-FP) method for the KGS system \eqref{KGSndim}-\eqref{KGSini} based on the multiscale decomposition by frequency \eqref{v}-\eqref{psi} with $\gamma(\bx) \equiv 0$. 

For simplicity of notation and without loss of generality, we only present the MTI-FP method for the KGS system \eqref{KGSndim}-\eqref{KGSini} with $\mu = \lambda = 1$ and in one dimension (1D), and  generalization to higher dimensions is straightforward by tensor product. As in the literatures \cite{bao2017uniformly,zhang2010finite}, 
the KGS system \eqref{KGSndim}-\eqref{KGSini} with $d = 1$ is usually truncated onto a bounded interval 	$\Omega=(a,b)$ ($|a|$ and $b$ are taken large enough such that the truncation error is negligible) with periodic boundary conditions
\begin{equation}
\label{psi1d}
\left\{
\begin{aligned}
 & i \partial_t \psi(x,t) + \partial_{xx} \psi(x,t) + \phi(x,t) \psi(x,t) = 0, \quad x \in \Omega, \quad t > 0,  \\
 & \psi(x,0) = \psi_0(x), \quad x \in \overline{\Omega}, \\
 & \psi(a,t) = \psi(b,t), \quad \partial_x \psi(a,t) = \partial_x \psi(b,t),\quad t \ge 0, 
 \end{aligned}\right.
 \end{equation}
and 
\begin{equation}
\label{phi1d}
\left\{\begin{aligned}
& \varepsilon^2 \partial_{tt} \phi(x,t) - \partial_{xx} \phi(x,t) + \displaystyle \frac{1}{\varepsilon^2} \phi(x,t) - | \psi(x,t)|^2 =0, \quad x \in \Omega, \quad t > 0, \\
& \phi(x,0) = \phi_0(x), \quad \partial_t \phi(x,0) = \frac{1}{\varepsilon^2} \phi_1(x), \quad x \in \overline{\Omega}, \\
& \phi(a,t) = \phi(b,t),\quad \partial_x \phi(a,t) = \partial_x \phi(b,t), \quad t \ge 0.
\end{aligned}\right.
\end{equation}

\subsection{A multiscale decomposition via simplified transmission conditions}
Let $\tau=\Delta t>0$ be the time step size, and denote time levels by $t_n = n \tau$ for $n = 0,1,...$.
We first present a multiscale decomposition by frequency for solutions of \eqref{psi1d}-\eqref{phi1d} on the time
interval $[t_n,t_{n+1}]$ with given initial data at $t=t_n$ as
\begin{equation}
\label{I_n2}
\left\{\begin{aligned}
& \psi(x,t_n) = \psi^n_0(x) = O(1), \qquad x \in \overline{\Omega},\\
& \phi(x,t_n) = \phi^n_0(x) = O(1),\qquad \partial_t \phi(x,t_n) = \frac{1}{\varepsilon^2} \phi^n_1(x) = O\left(\frac{1}{\varepsilon^2}\right).
\end{aligned}\right.
\end{equation}

Similar to the derivation in Section~\ref{sec:multi}, we take an ansatz to the solution $\phi(x,t):= \phi(x,t_n + s)$ of \eqref{phi1d} on the interval $[t_n,t_{n+1}]$ with the initial data in \eqref{I_n2} as
\begin{equation}
    \phi(x,t_n+s) = e^{i s / \varepsilon^2} v^n(x,s) + e^{-i s / \varepsilon^2} \overline{v^n(x,s)} + r^n(x,s), \quad x \in \overline{\Omega},\quad 0 \le s \le \tau. \label{ansatz1d}
\end{equation}

Then a multiscale decomposition by frequency with simplified transmission conditions for the KGS system \eqref{psi1d}-\eqref{phi1d} over the time interval $[t_n , t_{n+1}]$ is: $v^n :=v^n(x,s)$ satisfies the following Schr\"{o}dinger equation with wave operator under simplified transmission conditions as
\begin{equation}
\label{v1d}
\left\{\begin{aligned}
& \varepsilon^2 \partial_{ss} v^n(x,s) + 2i \partial_s v^n(x,s) - \partial_{xx} v^n(x,s)=0, \quad x \in \Omega, \quad 0 \le s \le \tau,  \\
& v^n(x,0) = \frac{1}{2} \left[ \phi_0^n(x) - i \phi_1^n(x) \right], \quad \partial_s v^n(x,0) = 0,\quad x \in \overline{\Omega}, \\
& v^n(a,s) = v^n(b,s),\quad \partial_x v^n(a,s) = \partial_x v^n(b,s), \quad 0 \le s \le \tau,
\end{aligned}\right.
\end{equation}
and the remainder $r^n :=r^n(x,s)$ satisfies the following nonlinear Klein-Gordon equation with source terms and homogeneous initial data as
\begin{equation}
\label{r1d}
\left\{\begin{aligned}
&  \varepsilon^2 \partial_{ss}r^n(x,s) -\partial_{xx} r^n(x,s) +\frac{1}{\varepsilon^2} r^n(x,s) - |\psi^n(x, s)|^2=0, \quad  0 \le s \le \tau, \\
    & r^n(x,0) = 0,\quad \partial_s r^n(x,0) = 0, \quad x \in \overline{\Omega},  \\
    & r^n(a,s) = r^n(b,s), \quad \partial_x r^n(a,s) = \partial_x r^n(b,s), \quad 0 \le s \le \tau,
\end{aligned}\right.
\end{equation}
where $\psi^n(x,s) : = \psi(x,t_n + s)$ satisfies the corresponding Schr\"odinger equation 
\begin{equation}
\label{psi1d(D)}
\left\{\begin{aligned}
 & i \partial_s \psi^n(x,s) + \partial_{xx} \psi^n(x, s) +  \left[ e^{i s / \varepsilon^2} v^n(x,s) + {\rm c.c.} + r^n(x,s) \right] \psi^n(x,s) = 0,  \\
 & \psi^n(x,0) = \psi_0^n(x), \quad x \in \overline{\Omega}, \\
 & \psi^n(a, s) = \psi^n(b, s), \quad \partial_x \psi^n(a, s) = \partial_x \psi^n(b,s),\quad 0 \le s \le \tau.
 \end{aligned}\right.
\end{equation}

\subsection{ A multiscale time integrator Fourier pseudospectral method}
Denote the mesh size as $h : = (b-a)/N$ with $N$ a positive even integer and grid points as $x_j := a + jh$ for $j = 0,1,...,N$. Define
\begin{align*}
   & X_N := \text{span} \left\{ e^{i\mu_l(x - a)} \ | \ l = \displaystyle -\frac{N}{2},..., \frac{N}{2} - 1 \right\} \quad \text{with} \ \ \mu_l : = \displaystyle \frac{2\pi l}{b-a}, \\
   & Y_N := \left\{ \mathbf{v} = (v_0,v_1,...,v_N)^T \in \mathbb{C}^{N+1} \ | \ v_0 = v_N  \right\} \quad \text{with} \ \ \left\|   \mathbf{v}
   \right\|_{l^2} : = \left( h \sum_{j = 0}^{N-1} | v_j|^2 \right)^{1/2}. 
\end{align*}
For a periodic function $v(x)$ on $\overline{\Omega}$ and a vector $\mathbf{v} \in Y_N$, let $P_N:L^2(\Omega) \rightarrow X_N$ denote the standard $L^2 $-projection operator, and let $I_N: C(\Omega) \rightarrow X_N$ or $Y_N \rightarrow X_N$ be the trigonometric interpolation operator, i.e.,
\begin{align*}
    (P_Nv)(x) = \sum_{l = -N / 2}^{N / 2 - 1} \widehat{v}_l e^{i \mu_l ( x - a)}, \quad (I_N \mathbf{v})(x) = \sum_{l = -N / 2}^{N / 2 - 1} \widetilde{\mathbf{v}}_l e^{i \mu_l (x-a)}, \quad a \le x \le b, 
\end{align*}
where $\widehat{v}_l$ and $\widetilde{\mathbf{v}}_l$ are the Fourier and discrete Fourier transform coefficients of the periodic function $v(x)$ and the vector $\mathbf{v}$, respectively, defined as 
\begin{align*}
    \widehat{v}_l = \displaystyle \frac{1}{b-a} \int_a^b v(x) e^{-i \mu_l (x - a)} dx, \quad \widetilde{\mathbf{v}}_l = \frac{1}{N} \sum_{j = 0}^{N-1} v_j e^{-i \mu_l (x_j - a)},
\end{align*}
with $v_j:=v(x_j)$ for $j=0,1,\ldots,N$.

To solve the equations \eqref{v1d}-\eqref{psi1d(D)} numerically, we apply exponential wave integrators in time \cite{bao2017uniformly} and Fourier spectral/pseudospectral method \cite{cai2023uniformly,shen2011spectral} in space. In space, we first apply the Fourier spectral method for discretizing \eqref{v1d}-\eqref{psi1d(D)}, i.e., to find $v^n_N(x,s)$, $r^n_N(x,s)$ and $\psi^n_N(x,s) \in X_N$ as
\begin{equation}
\label{FS}
\begin{aligned}
&v_N^n(x,s) = \sum_{l=-N/2}^{N/2-1} \widehat{(v^n_N)}_l(s) e^{i\mu_l (x-a)}, \quad r_N^n(x,s) = \sum_{l=-N/2}^{N/2-1} \widehat{(r^n_N)}_l(s) e^{i\mu_l (x-a)}, \\
& \psi_N^n(x,s) = \sum_{l=-N/2}^{N/2-1} \widehat{(\psi^n_N)}_l(s) e^{i\mu_l (x-a)}, \quad x \in \overline{\Omega}, \quad 0 \le s \le \tau,
\end{aligned}
\end{equation}
such that they are the solutions of the multiscale decomposition \eqref{v1d}-\eqref{r1d} and the Schr\"odinger equation \eqref{psi1d(D)}, respectively. Pluging \eqref{FS} into equations \eqref{v1d}-\eqref{psi1d(D)} and noticing the orthogonality of $e^{i\mu_l (x-a)}$ for $l=-\frac{N}{2},\ldots,\frac{N}{2}-1$, we get
 \begin{align*}
    & \varepsilon^2 \widehat{(v^n_N)}_l''(s) +  2i \widehat{(v^n_N)}_l'(s) + \mu_l^2 \widehat{(v^n_N)}_l(s)=0, \qquad 0 \le s \le \tau, \\
    & \varepsilon^2 \widehat{(r^n_N)}_l''(s) + ( \mu_l^2 + \frac{1}{\varepsilon^2} ) \widehat{(r^n_N)}_l(s) - \widehat{( |\psi^n_N|^2)}_l(s) =0, \\
    & i \widehat{(\psi^n_N)}_l'(s) - \mu_l^2 \widehat{(\psi^n_N)}_l(s) +  e^{i s / \varepsilon^2} \widehat{(v^n_N \psi^n_N)}_l(s) +  e^{-i s / \varepsilon^2} \widehat{(\overline{v^n_N} \psi^n_N)}_l(s) + \widehat{(r^n_N \psi^n_N)}_l(s) =0.
\end{align*}
By Duhamel's principle, we apply the initial data in \eqref{v1d} and \eqref{r1d} and obtain
\begin{align}
\widehat{(v^n_N)}_l(s) = & \ a_l(s) \widehat{(v^n_N)}_l(0), \qquad 0 \le s \le \tau, \label{Sv}\\
\widehat{(r^n_N)}_l(s) = & \  \int_0^s \frac{\sin(\omega_l (s - \theta))}{\varepsilon^2 \omega_l} \widehat{(|\psi^n_N|^2)}_l(\theta) d\theta, \label{Sr}\\
\widehat{(\psi^n_N)}_l(s) = &\  e^{-i \mu_l^2 s }\widehat{(\psi^n_N)}_l(0) + i e^{-i\mu_l^2 s} \int_0^s e^{i(\mu_l^2 + \frac{1}{\varepsilon^2}) \theta} \widehat{(v^n_N \psi^n_N)}_l(\theta) d\theta  \nonumber \\
& + i e^{-i\mu_l^2 s} \int_0^s e^{i(\mu_l^2 - \frac{1}{\varepsilon^2}) \theta} \widehat{( \overline{v^n_N} \psi^n_N)}_l(\theta) d\theta \nonumber \\
& + i \int_0^s e^{i\mu_l^2 (\theta - s)} \widehat{(r^n_N \psi^n_N)}_l(\theta) d\theta, \label{Sp}
\end{align}
where $\omega_l = \frac{1}{\varepsilon^2}\sqrt{1 + \varepsilon^2 \mu_l^2}$ for $l=-\frac{N}{2},\ldots, \frac{N}{2}-1$, and 
\begin{align}
\lambda_l^{\pm} := -  \displaystyle\frac{1}{\varepsilon^2}\left[
  1 \pm \sqrt{1 + \varepsilon^2 \mu_l^2}\right], \quad a_l(s) := \displaystyle \frac{\lambda_l^+ e^{is \lambda_l^-} - \lambda_l^-e^{is \lambda_l^+}}{\lambda_l^+ - \lambda_l^-}, \quad 0 \le s \le \tau,
    \label{Coeffa}
\end{align}
with
\begin{align}
    v^n_N(x,0) = \frac{1}{2}P_N \left( \phi_0^n - i \phi_1^n \right)(x), \quad \psi_N^n (x,0) = ( P_N \psi_0^n )(x). \label{FSinitial}
\end{align}
Then, differentiating \eqref{Sv}-\eqref{Sr} with respect to $s$, we have for $0 \le s \le \tau$
\begin{align}
\widehat{(v^n_N)}_l'(s) = & a_l'(s) \widehat{(v^n_N)}_l(0), \quad \widehat{(r^n_N)}_l'(s) =  \int_0^s \frac{\cos(\omega_l (s - \theta))}{\varepsilon^2} \widehat{(|\psi^n_N|^2)}_l(\theta) d\theta, \label{Sdvdr}
\end{align}
where
\begin{align}
    a_l'(s) = i \lambda_l^+ \lambda_l^- \displaystyle \frac{e^{is \lambda_l^-} - e^{is \lambda_l^+}}{\lambda_l^+ - \lambda_l^-}, \qquad l = - \frac{N}{2},\ldots,\frac{N}{2}-1.
    \label{Coeffda}
\end{align}

To approximate the integrals in \eqref{Sr}-\eqref{Sp} and \eqref{Sdvdr}, we apply the Gautschi's type quadrature \cite{gautschi1961numerical}, i.e.,
\begin{align*}
&\int_0^s \frac{\sin(\omega_l(s - \theta))}{\varepsilon^2 \omega_l} \widehat{(\left| \psi_N^n \right|^2)}_l(\theta ) d \theta \approx \int_0^s \frac{\sin(\omega_l(s - \theta))}{\varepsilon^2 \omega_l} \left[  \widehat{(\left| \psi_N^n \right|^2)}_l(0) + \theta \widehat{(\left| \psi_N^n \right|^2)}_l'(0) \right] d \theta, \\
& \int_0^s e^{i ( \mu_l^2 + \frac{1}{\varepsilon^2}) \theta} \widehat{( v_N^n \psi^n_N)}_l(\theta ) d \theta \approx \int_0^s e^{i ( \mu_l^2 + \frac{1}{\varepsilon^2}) \theta} \left[ \widehat{( v_N^n \psi^n_N)}_l(0 ) + \theta \widehat{( v_N^n \psi^n_N)}_l'(0) \right] d \theta,\\
& \int_0^s e^{i\mu_l^2 (\theta - s)} \widehat{(r^n_N \psi^n_N)}_l(\theta) d\theta \approx \int_0^s e^{i\mu_l^2 (\theta - s)} \left[ \widehat{(r^n_N \psi^n_N)}_l(0) + \theta \widehat{(r^n_N \psi^n_N)}_l'(0) \right] d\theta.
\end{align*}
Thus, by taking $s = \tau$ in \eqref{Sv}-\eqref{Sp} and \eqref{Sdvdr}, and combining \eqref{Coeffa}-\eqref{FSinitial} and \eqref{Coeffda}, we obtain the following multiscale time integrator Fourier spectral (MTI-FS) method
\begin{equation}
\label{FS_scheme}
\left\{\begin{aligned}
\widehat{(v_N^n)}_l(\tau) =&\ a_l(\tau) \widehat{(v_N^n)}_l(0), \quad \widehat{(v_N^n)}_l'(\tau) = a_l'(\tau) \widehat{(v_N^n)}_l(0), \\
 \widehat{(r_N^n)}_l(\tau) \approx &\ p_l(\tau) \widehat{(\left| \psi_N^n \right|^2)}_l (0) + q_l(\tau) \widehat{(\left| \psi_N^n \right|^2)}_l'(0), \\
 \widehat{(r_N^n)}_l'(\tau) \approx &\ p_l'(\tau) \widehat{(\left| \psi_N^n \right|^2)}_l (0) + q_l'(\tau) \widehat{(\left| \psi_N^n \right|^2)}_l'(0),\\
 \widehat{(\psi^n_N)}_l(\tau) \approx &\ e^{-i \mu_l^2 \tau }\widehat{(\psi^n_N)}_l(0) + c_l^+(\tau) \widehat{(v^n_N \psi^n_N)}_l(0) +  d_l^+(\tau) \widehat{(v^n_N \psi^n_N)}_l'(0)  \\
& + c_l^-(\tau) \widehat{(\overline{v^n_N} \psi^n_N)}_l(0) +  d_l^-(\tau) \widehat{(\overline{v^n_N} \psi^n_N)}_l'(0),
\end{aligned}\right.
\end{equation}
where
\begin{align}
\begin{cases}
p_l(\tau) = \displaystyle \int_0^{\tau} \frac{\sin(\omega_l(\tau - \theta))}{\varepsilon^2 \omega_l} d\theta, \quad q_l(\tau) = \int_0^{\tau}  \theta \frac{\sin(\omega_l(\tau - \theta))}{\varepsilon^2 \omega_l} d\theta, \\
p_l'(\tau) = \displaystyle \int_0^{\tau}  \frac{\cos(\omega_l(\tau - \theta))}{\varepsilon^2 } d\theta, \quad q_l'(\tau) = \int_0^{\tau}  \theta \frac{\cos(\omega_l(\tau - \theta))}{\varepsilon^2 } d\theta, \\
c_l^{\pm}(\tau) =  \displaystyle i e^{-i\mu_l^2 \tau} \int_0^{\tau} e^{i \left( \mu_l^2 \pm \frac{1}{\varepsilon^2} \right) \theta} d\theta, \\
d_l^{\pm}(\tau) =  \displaystyle i e^{-i\mu_l^2 \tau} \int_0^{\tau} \theta e^{i \left( \mu_l^2 \pm \frac{1}{\varepsilon^2} \right) \theta} d\theta.
\end{cases}
\end{align}
The time derivatives in \eqref{FS_scheme} are given by
\begin{equation}
\label{FS_Timed}
\left\{\begin{aligned}
& \widehat{(\psi_N^n)}_l'(0) \approx -i \frac{\sin(\mu_l^2 \tau)}{\tau} \widehat{(\psi_N^n)}_l(0) + i \widehat{(v_N^n \psi_N^n)}_l(0) + i \widehat{(\overline{v_N^n} \psi_N^n)}_l(0),\\
& \widehat{(\left| \psi_N^n \right|^2)}_l'(0)  = 2\widehat{({\rm Re} \{ \overline{\psi_N^n}\partial_s\psi_N^n\})}_l (0), \qquad l = -\frac{N}{2},\ldots,\frac{N}{2}-1, \\
& \widehat{(v_N^n \psi_N^n)}_l'(0) = \widehat{(v_N^n \partial_s \psi_N^n)}_l(0) + \widehat{(\psi_N^n \partial_s v_N^n)}_l(0) = \widehat{(v_N^n \partial_s \psi_N^n)}_l(0),
\end{aligned}\right.
\end{equation}
where ${\rm Re}(f)$ represents the real part of the function $f$.

In practical computation, the integrals in Fourier coefficients are usually approximated by numerical quadratures \cite{gautschi1961numerical,shen2011spectral}. Let $\phi_j^n$, $\dot{\phi}_j^n$, $\psi_j^n$ and $\dot{\psi}_j^n$ be approximations of $\phi(x_j,t_n)$, $\partial_t \phi(x_j,t_n)$, $\psi(x_j,t_n)$ and $\partial_t \psi(x_j,t_n)$, respectively, and let $v^{n,1}_j$, $\dot{v}^{n,1}_j$, $r^{n,1}_j$,  $\dot{r}^{n,1}_j$ be approximations of $v^n(x_j,\tau)$, $\partial_s v^n(x_j,\tau)$, $r^n(x_j,\tau)$ and $\partial_s r^n(x_j,\tau)$, respectively, for $n = 0,1,\ldots$ and $j = 0,1,\ldots,N$. Denote $\Phi^n=(\phi_0^n,\phi_1^n,\ldots,\phi_N^n)^T\in Y_N$, $\dot{\Phi}^n=(\dot{\phi}_0^n,\dot{\phi}_1^n,\ldots,
\dot{\phi}_N^n)^T\in Y_N$, $\Psi^n=(\psi_0^n,\psi_1^n,\ldots,\psi_N^n)^T\in Y_N$, $\dot{\Psi}^n=(\dot{\psi}_0^n,\dot{\psi}_1^n,\ldots,
\dot{\psi}_N^n)^T\in Y_N$, $\mathbf{v}^{n,1}=(v_0^{n,1},v_1^{n,1},\ldots,v_N^{n,1})^T\in Y_N$, $\dot{\mathbf{v}}^{n,1}=(\dot{v}_0^{n,1},\dot{v}_1^{n,1},\ldots,\dot{v}_N^{n,1})^T\in Y_N$, $\mathbf{r}^{n,1}=(r_0^{n,1},r_1^{n,1},\ldots,r_N^{n,1})^T\in Y_N$ and $\dot{\mathbf{r}}^{n,1}=(\dot{r}_0^{n,1},$ $\dot{r}_1^{n,1},\ldots,\dot{r}_N^{n,1})^T\in Y_N$.
Then a multiscale time integrator Fourier pseudospectral (MTI-FP) method for the KGS system \eqref{psi1d}-\eqref{phi1d} reads as
\begin{equation}
\label{MTIFP}
\left\{\begin{aligned}
&\phi_j^{n+1} = e^{i\tau / \varepsilon^2} v^{n,1}_j + e^{-i\tau / \varepsilon^2} \overline{v^{n,1}_j} + r_j^{n,1}, \qquad j = 0,1,...,N, \qquad n \ge 0, \\
&\dot{\phi}_j^{n+1} = e^{i\tau / \varepsilon^2} \left( \dot{v}_j^{n,1} + \displaystyle \frac{i}{\varepsilon^2} v^{n,1}_j \right) + e^{-i\tau / \varepsilon^2} \left( \overline{\dot{v}_j^{n,1}} - \displaystyle \frac{i}{\varepsilon^2} \overline{v^{n,1}_j} \right) + \dot{r}_j^{n,1}, \\
& \psi_j^{n+1} = \sum_{l = -N/2}^{N/2 - 1} \widetilde{(\Psi^{n+1})}_l e^{i\mu_l(x_j-a)},
\end{aligned}\right.
\end{equation}
where
\begin{subequations}
\label{MTIFPvr}
\begin{align}
& v_j^{n,1} = \sum_{l = -N / 2}^{N/2-1} \widetilde{(\mathbf{v}^{n,1})}_l e^{i \mu_l (x_j - a)}, \quad & \dot{v}_j^{n,1} = \sum_{l = -N / 2}^{N/2 - 1} \widetilde{(\dot{\mathbf{v}}^{n,1})}_l e^{i \mu_l (x_j -a)}, \\
&  r_j^{n,1} = \sum_{l = -N / 2}^{N/2-1} \widetilde{(\mathbf{r}^{n,1})}_l e^{i \mu_l (x_j - a)}, \quad & \dot{r}_j^{n,1} = \sum_{l = -N / 2}^{N/2 - 1} \widetilde{(\dot{\mathbf{r}}^{n,1})}_l e^{i \mu_l (x_j -a)},
\end{align}
\end{subequations}
with the Fourier coefficients given by
\begin{subequations}\label{MTIFP_Covrp}
\begin{align}
\widetilde{(\mathbf{v}^{n,1})}_l =&\ a_l(\tau) \widetilde{(\mathbf{v}^{n,0})}_l, \quad   
\widetilde{(\dot{\mathbf{v}}^{n,1})}_l = a_l'(\tau) \widetilde{(\mathbf{v}^{n,0})}_l , \label{MTIFP.Cv}\\
\widetilde{(\mathbf{r}^{n,1})}_l =&\ p_l(\tau)  \widetilde{(\left| \Psi^n \right|^2)}_l   + 2 q_l(\tau)  \widetilde{( {\rm Re}\{\overline{\Psi^n} \dot{\Psi}^n\})}_l , \quad l = \displaystyle -\frac{N}{2},..., \frac{N}{2}-1, \label{MTIFP.Cr} \\
\widetilde{(\dot{\mathbf{r}}^{n,1})}_l  = &\ p_l'(\tau)  \widetilde{(\left| \Psi^n \right|^2)}_l   + 2 q_l'(\tau)  \widetilde{( {\rm Re}\{\overline{\Psi^n} \dot{\Psi}^n\})}_l, \\
 \widetilde{(\Psi^{n+1})}_l =  &\ e^{-i \mu_l^2 \tau }\widetilde{(\Psi^n)}_l + c_l^+(\tau) \widetilde{(\mathbf{v}^{n,0} \Psi^n)}_l +  d_l^+(\tau) \widetilde{(\mathbf{v}^{n,0} \dot{\Psi}^n)}_l\nonumber \\
& + c_l^-(\tau) \widetilde{(\overline{\mathbf{v}^{n,0}} \Psi^n)}_l +  d_l^-(\tau) \widetilde{(\overline{\mathbf{v}^{n,0}} \dot{\Psi}^n)}_l, \label{MTIFP.CPsi}
\end{align}
\end{subequations}
and
\begin{subequations}\label{MTIFP_Ini}
\begin{align}
&\mathbf{v}^{n,0}=(v^{n,0}_0,v^{n,0}_1,\ldots,v^{n,0}_N)^T\in Y_N,\quad \mathrm{with} \quad v^{n,0}_j = \displaystyle \frac{1}{2} \left(\phi_j^n - i\varepsilon^2 \dot{\phi}_j^n  \right),\label{MTIIni1} \\
& \widetilde{(\dot{\Psi}^n)}_l = - i \frac{ \sin(\mu_l^2 \tau)}{\tau} \widetilde{(\Psi^n)}_l + i \widetilde{(\Phi^n \Psi^n)}_l, \quad l = -\frac{N}{2},\ldots,\frac{N}{2}-1,\quad n\ge0 . \label{MTIIni2}
\end{align}
\end{subequations}
The initial data is given as
\begin{equation}\label{init00}
\phi^0_j=\phi_0(x_j), \quad \dot{\phi}^0_j=\frac{1}{\varepsilon^2}\phi_1(x_j),\quad \psi_j^0 = \psi_0(x_j), \qquad j=0,1,\ldots, N.
\end{equation}

The above MTI-FP method for \eqref{psi1d}-\eqref{phi1d} is implemented by the fast Fourier transform (FFT). Thus, this method is fully explicit, accurate and very efficient, and its memory cost is $O(N)$ and the computational cost per time step is $O(N  \text{log} N)$.


\section{A uniformly accurate error bound}\label{sec:unierr}
In this section, we shall establish two independent error bounds for the MTI-FP method \eqref{MTIFP}-\eqref{init00}, which immediately implies a uniformly accurate error bound with respect to $\varepsilon \in (0,1]$.

Let $T^*$ be the maximum existence time of the solutions $\psi(x,t)$ and $\phi(x,t)$ to the KGS system \eqref{psi1d}-\eqref{phi1d} and take $0 < T < T^*$ as some fixed time. We make the following assumption (A) of $\psi(x,t)$ and $\phi(x,t)$, i.e., there exists an integer $m_0 \ge 5$ such that
\begin{equation*}\text{(A)} 
\begin{aligned} \qquad & \psi \in C \left([0,T]; H^{m_0}_{\rm per} (\Omega) \right), \quad && \left\| \psi \right\|_{L^{\infty} \left( [0,T]; H^{m_0} \right)} \lesssim 1,  \\
 & \phi \in C^1 \left([0,T]; H^{m_0}_{\rm per} (\Omega) \right), \quad && \left\| \phi \right\|_{L^{\infty} \left( [0,T]; H^{m_0} \right)} + \varepsilon^2 \left\| \partial_t \phi \right\|_{L^{\infty} \left( [0,T]; H^{m_0} \right)} \lesssim 1,
\end{aligned}
\end{equation*}
where $H_{\rm per}^m(\Omega) = \left\{ u(x) \in H^m(\Omega) \ | \ u^{(k)}(a) = u^{(k)} (b), \ k = 0,1,\ldots, m-1  \right\}$.

\subsection{Main results}
In this subsection, we present our main error estimates under assumption (A).
We introduce the following notations
\begin{align*}
& C_\psi = \max_{0<\varepsilon \le 1} \left\{ \left\| \psi \right\|_{L^{\infty} \left( [0,T]; H^{m_0} \right)}\right\}, \\ 
& C_\phi = \max_{0<\varepsilon \le 1} \left\{ \left\| \phi \right\|_{L^{\infty} \left( [0,T]; H^{m_0} \right)}, \varepsilon^2 \left\| \partial_t \phi \right\|_{L^{\infty} \left( [0,T]; H^{m_0} \right)} \right\},
\end{align*}
and define the error functions as
\begin{equation}
\begin{aligned}
  &  e^n_\psi(x) := \psi(x,t_n) - (I_N\Psi^n)(x), \qquad x \in \overline{\Omega
  }, \qquad n \ge 0, \\
  &  e^n_\phi(x) := \phi(x,t_n) - (I_N\Phi^n)(x), \qquad \dot{e}^n_\phi(x) := \partial_t \phi(x,t_n) - (I_N\dot{\Phi}^n)(x),
\end{aligned}
\end{equation}
where $\Psi^n$, $\Phi^n$ and $\dot{\Phi}^n$ are the numerical solutions obtained from the MTI-FP method \eqref{MTIFP}-\eqref{init00}. Then we have the following error estimates.

\begin{Theorem} Under the assumption (A), there exist two constants $0 < h_0 < 1$ and $0 < \tau_0 < 1$ sufficiently small and independent of $\varepsilon$ such that for any $0 < \varepsilon \le 1$, when $0 < h \le h_0$ and $0 < \tau \le \tau_0$, we have for $0 \le n \leq \frac{T}{\tau}$
\begin{align}
    & \left\| e^n_\psi \right\|_{H^1} + \left\| e^n_\phi \right\|_{H^1} + \varepsilon^2 \left\| \dot{e}^n_\phi \right\|_{H^1} \lesssim h^{m_0 -1 } + \displaystyle \frac{\tau^2}{\varepsilon^2}, \label{thm1err1}\\ 
    & \left\| e^n_\psi \right\|_{H^1}  + \left\| e^n_\phi \right\|_{H^1} + \varepsilon^2 \left\| \dot{e}^n_\phi \right\|_{H^1} \lesssim h^{m_0 -1} +  \varepsilon^2, \label{thm1err2} \\
    & \left\| I_N \Psi^n \right\|_{H^1} \le C_\psi + 1, \quad \left\| I_N \Phi^n \right\|_{H^1} \le C_\phi + 1, \quad \left\| I_N \dot{\Phi}^n \right\|_{H^1} \le \displaystyle \frac{1}{ \varepsilon^2}\left(C_\phi + 1\right). \label{thm1err3}
\end{align}
Thus, by taking the minimum of the two error estimates in \eqref{thm1err1}-\eqref{thm1err2} and then taking the maximum for $\varepsilon \in (0,1]$, we obtain an error estimate uniformly for $\varepsilon \in (0,1]$,
\begin{align}
   \left\| e^n_\psi \right\|_{H^1} + \left\| e^n_\phi \right\|_{H^1} + \varepsilon^2 \left\| \dot{e}^n_\phi \right\|_{H^1} \lesssim h^{m_0 -1} +  \max_{0 < \varepsilon \le 1}\min \left\{ \displaystyle \frac{\tau^2}{\varepsilon^2}, \varepsilon^2 \right\} \lesssim h^{m_0 -1} + \tau.\label{thm1err4}
\end{align}
\label{theorem1}
\end{Theorem}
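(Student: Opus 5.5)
The proof follows the standard MTI-FP framework of \cite{bao2017uniformly}, adapted to the simplified transmission conditions $\partial_s v^n(x,0)=\partial_s r^n(x,0)=0$. The argument is a mathematical induction on $n$ that carries the error bounds and the a priori bounds \eqref{thm1err3} together. The first step is to introduce the ``exact'' micro variables $v^n, r^n, \psi^n$ on $[t_n,t_{n+1}]$ from \eqref{v1d}--\eqref{psi1d(D)}, driven by the exact data $\psi(t_n),\phi(t_n),\partial_t\phi(t_n)$. We then split the error into three parts: the projection error $P_N$ of the exact solution, which costs $h^{m_0-1}$ in $H^1$ by assumption (A); a local truncation error obtained by inserting $P_N$ of the exact micro variables into the MTI-FS scheme \eqref{FS_scheme}; and the propagation of errors through the numerical flow. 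The interpolation-versus-projection discrepancies from the pseudospectral products are controlled by standard aliasing estimates, again at order $h^{m_0-1}$. We use $m_0\ge5$ so that the nonlinear terms and $\partial_s\psi$ stay in $H^2$ or $H^3$.

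The key estimates are two independent local error bounds for the micro variables. The first comes from the energy method. Since $v^n$ solves a linear equation with $\partial_s v^n(0)=0$, the multiplier $\partial_s\overline{v^n}$ gives $\|\partial_s v^n\|_{H^k}\lesssim \|v^n(0)\|_{H^{k+2}}$ and $\varepsilon^2\|\partial_{ss}v^n\|_{H^k}\lesssim 1$. Consequently $\|\partial_{ss}v^n\|\lesssim\varepsilon^{-2}$, and $r^n=O(\tau^2/\varepsilon^2)$ with $\partial_s r^n$ of similar size. The Gautschi quadrature remainders are of the form $\int_0^\tau\theta^2 (\cdot)''$. Using $\|\partial_{ss}\psi^n\|\lesssim\varepsilon^{-2}$ in the oscillatory terms, together with the exact integration of the phases $e^{\pm is/\varepsilon^2}$ in $c_l^\pm, d_l^\pm$, the local truncation error is $O(\tau^3/\varepsilon^2)$ for $\psi$ and $O(\tau^3/\varepsilon^2)$ for $(\phi,\varepsilon^2\dot\phi)$. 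Summing over $T/\tau$ steps via a discrete energy and Gronwall argument then yields \eqref{thm1err1}.

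The second bound, \eqref{thm1err2}, requires more care and is where I expect the main obstacle. Here we compare both the exact and the numerical solutions with the limiting model \eqref{lim}. For the exact solution, we must show $r^n=O(\varepsilon^2)$ uniformly and that $v^n$ differs from its $\varepsilon\to0$ Schr\"odinger limit by $O(\varepsilon^2)$ in a way that does not accumulate over steps. Because $\partial_s v^n(0)=0$ is not well-prepared, $v^n$ carries an initial layer of amplitude $O(\varepsilon^2)$ oscillating at frequency $\varepsilon^{-2}$, namely the $\lambda_l^+$ mode in $a_l$. The plan is to decompose $a_l(s)=e^{is\lambda_l^-}+O(\varepsilon^2\mu_l^2)$ explicitly. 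The $\lambda^+$-mode then contributes an $O(\varepsilon^2)$ term that gets absorbed into the fast part of $\phi$ through \eqref{MTIFP} at each step. We must verify that these per-step $O(\varepsilon^2)$ defects telescope rather than sum to $O(\varepsilon^2 n)$. To do this, I would show that the numerical scheme, viewed in the $\varepsilon\to0$ limit, is itself a consistent and stable exponential integrator for \eqref{lim}, with per-step defect $O(\tau\varepsilon^2)$ measured in an $\varepsilon$-weighted energy norm. This would use the identity $v^{n+1,0}=\tfrac12(\phi^{n+1}-i\varepsilon^2\dot\phi^{n+1}) = v^{n,1}+O(r^{n,1},\varepsilon^2\dot r^{n,1})$, from which the recurrence for $v$ reduces to the exact flow up to $O(\varepsilon^2\tau)$.

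Finally, the a priori bounds \eqref{thm1err3} come from the induction together with an inverse inequality. Given the bound at step $n$, the error estimate at step $n+1$ gives $\|I_N\Psi^{n+1}\|_{H^1}\le C_\psi+1$ once $h\le h_0$ and, using the minimum of the two bounds, $\max_\varepsilon\min\{\tau^2/\varepsilon^2,\varepsilon^2\}\le\tau\le\tau_0$. These constants are independent of $\varepsilon$. Taking the minimum of \eqref{thm1err1} and \eqref{thm1err2}, with the balance at $\varepsilon^2=\tau$, gives \eqref{thm1err4}.
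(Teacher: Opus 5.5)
Your argument for \eqref{thm1err1} is essentially the paper's: prior bounds on $v^n,r^n$, second-order Taylor remainders of size $\tau^3/\varepsilon^2$ per step, then energy plus Gronwall. Your route to \eqref{thm1err2}, however, has a genuine gap, at exactly the point you flag. Your mechanism rests on $v^{n+1,0}=v^{n,1}+O(r^{n,1},\varepsilon^2\dot r^{n,1})$, which is not what the scheme gives. From \eqref{MTIFP} and \eqref{MTIIni1},
\[
v^{n+1,0}_j=e^{i\tau/\varepsilon^2}v^{n,1}_j-i\varepsilon^2\,\mathrm{Re}\bigl(e^{i\tau/\varepsilon^2}\dot v^{n,1}_j\bigr)+\tfrac12\bigl(r^{n,1}_j-i\varepsilon^2\dot r^{n,1}_j\bigr).
\]
The middle term is $O(\varepsilon^2)$ per step, not $O(\varepsilon^2\tau)$. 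The same holds for the $\lambda_l^+$-part $\frac{\lambda_l^-}{\lambda_l^+-\lambda_l^-}\bigl(e^{i\tau\lambda_l^-}-e^{i\tau\lambda_l^+}\bigr)$ of $a_l(\tau)$ that you isolate. So, measured in the $v$-variables against \eqref{lim}, the per-step defect is $O(\varepsilon^2)$, and stability plus summation gives only $O(\varepsilon^2/\tau)$. The claimed $O(\tau\varepsilon^2)$ consistency is precisely what needs proof, and you give no argument for the cancellation. The detour through \eqref{lim} also requires an extra nonrelativistic-limit theorem for the exact solution, $\|\psi(\cdot,t)-\psi_{\rm S}(\cdot,t)\|_{H^1}+\|\phi(\cdot,t)-\phi_{\rm S}(\cdot,t)\|_{H^1}\lesssim\varepsilon^2$. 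That estimate is not part of (A), and the paper never needs it.

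The missing idea is Lemma~\ref{lemma_reformulation}. Since $\mathrm{Re}\bigl(e^{i\tau/\varepsilon^2}a_l(\tau)\bigr)=\cos(\omega_l\tau)$ and $\varepsilon^2\,\mathrm{Im}\bigl(e^{i\tau/\varepsilon^2}a_l(\tau)\bigr)=\sin(\omega_l\tau)/\omega_l$, the three steps exactly reproduce the free Klein--Gordon propagator in $(\phi,\dot\phi)$: decomposing with $\partial_s v^n(x,0)=0$, propagating $v^n$ exactly, and recombining. Your intuition that the recombination absorbs the $\lambda^+$-mode is right, and the absorption is exact, so nothing needs to telescope. Keep the error equations \eqref{Erreq} and the energy $\mathcal{E}$ from the first bound, and prove a second, independent local estimate against the exact KGS flow, $\mathcal{E}(\xi^n_\psi,\xi^n_\phi,\dot\xi^n_\phi)\lesssim\tau^2\varepsilon^2$. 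This follows from three ingredients:
\begin{itemize}
\item a first-order Taylor expansion and one integration by parts against $e^{\pm i\theta/\varepsilon^2}$ in the $v^n\psi^n$ integrals, which gains $\varepsilon^2$ at the cost of two derivatives since $|d_l^\pm(\tau)|\lesssim\tau\varepsilon^2(1+\mu_l^2)$;
\item the uniform bound $\|r^n\|_{L^\infty([0,\tau];H^{m_0-2})}\lesssim\varepsilon^2$ for the $r^n\psi^n$ integral, which the scheme drops entirely;
\item an integration by parts in the Gautschi remainder for $\xi^n_\phi$, whose kernels (e.g.\ $q_l(\tau)$ and the antiderivative of $\theta\sin(\omega_l(\tau-\theta))/(\varepsilon^2\omega_l)$) are $O(\tau\varepsilon^2)$ because $\varepsilon^2\omega_l^2\ge\varepsilon^{-2}$.
\end{itemize}
Summing $\tau\varepsilon^2$ over $T/\tau$ steps in the $\varepsilon^{-2}$-weighted energy and applying Gronwall then gives \eqref{thm1err2}.
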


\noindent In order to prove Theorem~\ref{theorem1}, we introduce the error energy functional \cite{bao2017uniformly}
\begin{align}
\mathcal{E}( e^n_\psi,e^n_\phi,\dot{e}^n_\phi) := \displaystyle \frac{1}{\varepsilon^2} \left( \left\| e^n_\psi \right\|_{H^1}^2 + \left\| e^n_\phi \right\|_{H^1}^2 \right) + \left\|\partial_x e^n_\phi \right\|_{H^1}^2 + \varepsilon^2 \left\| \dot{e}^n_\phi \right\|_{H^1}^2  , \quad n \ge 0. \label{enefun}
\end{align}
The proof will be split into four main steps to be presented in the following subsections as:
(i) estimates for the micro (or local) variables $v^n$ and $r^n$, (ii)
 error functions and the corresponding error equations, (iii) estimates for local truncation errors and nonlinear terms errors, and 
(iv) proof for the main result by the energy method.

\subsection{Estimates for the micro (or local) variables}
We first show the prior estimates for $v^n$ and $r^n$ in the multiscale decomposition \eqref{v1d}-\eqref{r1d} at each time step.
\begin{Lemma}\label{lemma:prior}
Under the assumption (A), there exists a constant $\tau_1 >0$ independent of $0 < \varepsilon \le 1$ and $h>0$, such that for $0 < \tau \le \tau_1$
\begin{align}
& \left\| v^n \right\|_{L^{\infty}([0,\tau]; H^{m_0 })} + \left\| \partial_s v^n \right\|_{L^{\infty}([0,\tau]; H^{m_0 - 2})} + \varepsilon^2 \left\| \partial_{ss} v^n \right\|_{L^{\infty}([0,\tau]; H^{m_0-2})} \lesssim 1, \label{lm1v} \\ 
& \left\| r^n \right\|_{L^{\infty}([0,\tau]; H^{m_0 -2 })} + \varepsilon^2 \left\| \partial_s r^n \right\|_{L^{\infty}([0,\tau]; H^{m_0 -2})} + \varepsilon^4 \left\| \partial_{ss} r^n \right\|_{L^{\infty}([0,\tau]; H^{m_0-2})} \lesssim \varepsilon^2. \label{lm1r}
\end{align}
\end{Lemma}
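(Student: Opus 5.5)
The bounds on $v^n$ and on $r^n$ will be proved separately. For $v^n$ I will work directly with the Fourier representation \eqref{Sv}. The initial data are $v^n(x,0)=\frac12(\phi^n_0-i\phi^n_1)$, where $\phi^n_0=\phi(\cdot,t_n)$ and $\phi^n_1=\varepsilon^2\partial_t\phi(\cdot,t_n)$. By assumption (A), $\|v^n(\cdot,0)\|_{H^{m_0}}\lesssim 1$.

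\textbf{Bounds for $v^n$.} The roots satisfy $\lambda_l^-=-\frac{1}{\varepsilon^2}(1-\sqrt{1+\varepsilon^2\mu_l^2})=\frac{\mu_l^2}{1+\sqrt{1+\varepsilon^2\mu_l^2}}\in[0,\mu_l^2/2]$ and $\lambda_l^+=-\frac{1}{\varepsilon^2}(1+\sqrt{1+\varepsilon^2\mu_l^2})$. Hence $|\lambda_l^+|\ge 2/\varepsilon^2$, and $\lambda_l^+-\lambda_l^-=-\frac{2}{\varepsilon^2}\sqrt{1+\varepsilon^2\mu_l^2}$, with $|\lambda_l^\pm|/|\lambda_l^+-\lambda_l^-|\le 1$.

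From \eqref{Coeffa} we get $|a_l(s)|\le 1$. Next, $|a_l'(s)|\le |\lambda_l^+\lambda_l^-|\cdot 2/|\lambda_l^+-\lambda_l^-|\le 2|\lambda_l^-|\le \mu_l^2$.

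Differentiating once more,
$$a_l''(s)=-\lambda_l^+\lambda_l^-\,\frac{\lambda_l^-e^{is\lambda_l^-}-\lambda_l^+e^{is\lambda_l^+}}{\lambda_l^+-\lambda_l^-},$$
which gives $|a_l''(s)|\le 2|\lambda_l^+\lambda_l^-|\,\max_\pm|\lambda_l^\pm|/|\lambda_l^+-\lambda_l^-|\le 2|\lambda_l^+\lambda_l^-|$. Since $\lambda_l^+\lambda_l^-=-\mu_l^2/\varepsilon^2$, this yields $\varepsilon^2|a_l''(s)|\lesssim \mu_l^2$.

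Applying Parseval with these multipliers gives \eqref{lm1v}: the bound on $a_l$ costs no derivatives, while the bounds on $a_l'$ and on $\varepsilon^2 a_l''$ each cost two derivatives. These estimates hold for all $s\ge 0$, uniformly in $\varepsilon$ and $l$. Equivalently, one can use the energy identity for \eqref{v1d}, but the Fourier route is explicit.

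\textbf{Bounds for $r^n$.} I will use Duhamel's formula in operator form,
$$r^n(s)=\int_0^s\frac{\sin(\langle\partial_x\rangle_\varepsilon(s-\theta))}{\varepsilon^2\langle\partial_x\rangle_\varepsilon}|\psi^n(\theta)|^2\,d\theta,\qquad \langle\partial_x\rangle_\varepsilon=\varepsilon^{-2}\sqrt{1-\varepsilon^2\partial_{xx}}.$$
A crude bound on the sine would only give $\|r^n\|\lesssim s$, which is not $O(\varepsilon^2)$. The key step is therefore to integrate by parts in $\theta$, which pulls out a factor $1/(\varepsilon^2\omega_l^2)=\varepsilon^2/(1+\varepsilon^2\mu_l^2)\le\varepsilon^2$:
$$\widehat{(r^n)}_l(s)=\frac{1}{\varepsilon^2\omega_l^2}\Big[\widehat{|\psi^n|^2}_l(s)-\cos(\omega_l s)\widehat{|\psi^n|^2}_l(0)-\int_0^s\cos(\omega_l(s-\theta))\,\partial_\theta\widehat{|\psi^n|^2}_l(\theta)\,d\theta\Big].$$
This gives $\|r^n(s)\|_{H^{m_0-2}}\lesssim\varepsilon^2\big(\||\psi^n|^2\|_{L^\infty H^{m_0-2}}+\tau\|\partial_s|\psi^n|^2\|_{L^\infty H^{m_0-2}}\big)$.

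For the time derivative, I differentiate the Duhamel formula directly: the kernel is $\cos(\cdot)/\varepsilon^2$, so $\varepsilon^2\|\partial_s r^n\|\lesssim\tau\||\psi^n|^2\|$. Integrating by parts in the same way instead gives $\varepsilon^2\|\partial_s r^n\|\lesssim\varepsilon^2$.

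For the second derivative, I read $\varepsilon^4\partial_{ss}r^n=\varepsilon^2\partial_{xx}r^n-r^n+\varepsilon^2|\psi^n|^2$ off the equation \eqref{r1d}. This requires $\|r^n\|_{H^{m_0}}$, which loses two derivatives. To avoid that, I will instead integrate by parts twice in $\theta$, or equivalently use the formula for $\partial_s r^n$ and one more integration by parts, to bound $\varepsilon^4\partial_{ss}r^n$ in $H^{m_0-2}$ by $\varepsilon^2$ times norms of $|\psi^n|^2$ and $\partial_s|\psi^n|^2$.

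\textbf{Bounds for $\psi^n$ (main obstacle).} By uniqueness, $\psi^n(s)=\psi(t_n+s)$, so (A) directly gives $\||\psi^n|^2\|_{H^{m_0-2}}\lesssim 1$, since $H^{m_0}$ is an algebra. For $\partial_s\psi^n=i\partial_{xx}\psi^n+i\phi\psi^n$, the bound $\|\partial_s\psi^n\|_{H^{m_0-2}}\lesssim 1$ also follows from (A), because $\phi$ is bounded in $H^{m_0}$. Hence $\|\partial_s|\psi^n|^2\|_{H^{m_0-2}}\lesssim 1$.

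The delicate point is the $H^{m_0-2}$ bound for $\varepsilon^4\partial_{ss}r^n$. If I need $\partial_s^2|\psi|^2$, this requires $\psi\in H^{m_0-4}$ regularity of $\partial_{ss}\psi$ together with $\partial_t\phi=O(\varepsilon^{-2})$. I expect the weight $\varepsilon^4$ to absorb that loss: with the one-integration-by-parts form of $\partial_s r^n$, differentiating the boundary terms gives $O(1)\cdot\varepsilon^2$ after multiplying by $\varepsilon^4$, using $|\omega_l|/(\varepsilon^2\omega_l^2)\le 1$.

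The smallness $\tau\le\tau_1$ only enters through the factors of $\tau$ in the crude bounds. Verifying these bookkeeping steps is the main obstacle; everything else is direct multiplier estimates.
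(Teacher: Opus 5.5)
Your proposal is correct and follows essentially the paper's route. For $v^n$ you use explicit Fourier multipliers: your direct bounds on $a_l$, $a_l'$ and $\varepsilon^2 a_l''$ replace the paper's operator $b(s)$ combined with the equation. For $r^n$ and $\partial_s r^n$ you integrate by parts once in the Duhamel formula, which extracts the factor $1/(\varepsilon^2\omega_l^2)\le\varepsilon^2$ for $r^n$ and $1/(\varepsilon^2\omega_l)\le 1$ for $\partial_s r^n$, exactly as the paper does.

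The only variation is in $\varepsilon^4\partial_{ss}r^n$. Differentiating your integrated-by-parts formula for $\partial_s r^n$ gives
$\varepsilon^4\widehat{(r^n)}_l''(s)=\varepsilon^2\bigl[\cos(\omega_l s)\widehat{(\rho^n)}_l(0)+\int_0^s\cos(\omega_l(s-\theta))\widehat{(\rho^n)}_l'(\theta)\,d\theta\bigr]$,
so $\partial_s^2|\psi^n|^2$ is never needed and your hedging on that point is unnecessary. The paper instead reads $\varepsilon^4\partial_{ss}r^n$ off the equation, using the crude bound $\|r^n\|_{H^{m_0}}\le\tau\|\rho\|_{L^\infty H^{m_0}}\lesssim 1$. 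Only $O(1)$ is required there, because of the factor $\varepsilon^2$ in front of $\partial_{xx}r^n$, so your concern that this route loses two derivatives is unfounded.
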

\begin{proof}
Recalling the equation of $v^n$ in \eqref{v1d} and applying the Duhamel's principle, we get
\begin{align}
    v^n(x,s) = a(s) v^n(x,0),\qquad x \in \overline{\Omega}, \qquad 0 \le s \le \tau,
\end{align}
where $a(s)$ is a pseudo-differential operartor
\begin{align*}
    a(s) := \frac{\lambda^+e^{is \lambda^-} - \lambda^- e^{is \lambda^+}}{\lambda^+ - \lambda^-}, \qquad \lambda^\pm:= - \frac{1}{\varepsilon^2} \left[ 1 \pm \sqrt{1 - \varepsilon^2 \Delta }\right],
\end{align*}
and for any $f \in H^k(\Omega)$
\begin{align*}
    \left\| a(s) f \right\|_{H^k} \le 2 \left\| f \right\|_{H^k},\qquad 0 \le s \le \tau.
\end{align*}
Then recalling the initial data in \eqref{v1d}, we have for $ 0 \le s \le \tau$
\begin{align*}
    \left\| v^n(\cdot,s) \right\|_{H^{m_0}} \le 2 \left\| v^n(\cdot,0)\right\|_{H^{m_0}} \le \left\| \phi(\cdot,t_n) \right\|_{H^{m_0}} + \varepsilon^2 \left\| \partial_t \phi(\cdot,t_n) \right\|_{H^{m_0}}, 
\end{align*}
which implies
\begin{align*}
    \left\| v^n \right\|_{L^{\infty}([0,\tau];H^{m_0})} \le \left\| \phi \right\|_{L^{\infty}([0,T];H^{m_0})} + \varepsilon^2 \left\| \partial_t \phi \right\|_{L^{\infty}([0,T];H^{m_0})} \lesssim 1, \quad n \ge 0.
\end{align*}
Similarly, by differentiating \eqref{v1d}, we can get the formula for $\partial_s v^n$, i.e.,
\begin{align}
    \partial_s v^n(x,s) = \varepsilon^2 b(s) \partial_{ss}v^n(x,0), \qquad b(s) := i \frac{e^{is \lambda^+} - e^{is \lambda^-}}{\varepsilon^2 (\lambda^- - \lambda^+ )}, \label{lm1.1}
\end{align}
with 
\begin{align}
    \partial_{ss} v^n(x,0) = \frac{1}{\varepsilon^2} \partial_{xx} v^n(x,0) \in H^{m_0 -2}.\label{lm1.2}
\end{align}
Thus, with
\begin{align}\label{lm1.3}
    \varepsilon^2 \partial_{ss} v^n(x,s) = -2i \partial_s v^n(x,s) + \partial_{xx}v^n(x,s), \quad x \in \Omega,\quad 0 \le s \le \tau,
\end{align}
we combine \eqref{lm1.1}-\eqref{lm1.3} and establish estimates \eqref{lm1v} in a similar manner with details omitted here for brevity.

To obtain the estimate \eqref{lm1r}, we perform the analysis in the Fourier space. Assuming
\begin{align*}
    r^n(x,s) = \sum_{l \in \mathbb{Z} }\widehat{(r^n)}_l(s) e^{i\mu_l(x-a)}, \qquad x \in \overline{\Omega}, \qquad 0 \le s \le \tau,
\end{align*}
and taking the Fourier transform on both sides of \eqref{r1d}, we have
\begin{align*}
    \widehat{(r^n)}_l(s) & = \int_0^s \frac{\sin(\omega_l(s - \theta))}{\varepsilon^2 \omega_l}\widehat{(\left| \psi^n\right|^2)}_l(\theta) d \theta \nonumber \\
    & = \int_0^s\frac{1}{\varepsilon^2 \omega_l^2} \widehat{(\left| \psi^n\right|^2)}_l(\theta) d ( \cos(\omega_l(s - \theta))\nonumber \\
    & = \frac{1}{\varepsilon^2 \omega_l^2} \widehat{(\left| \psi^n\right|^2)}_l(s) - \frac{\cos(\omega_l s)}{\varepsilon^2 \omega_l^2} \widehat{(\left| \psi^n\right|^2)}_l(0) - \int_0^s \frac{\cos(\omega_l(s - \theta))}{\varepsilon^2 \omega_l^2} \widehat{(\left| \psi^n\right|^2)}_l'(\theta) d\theta,
\end{align*}
which implies
\begin{align}
    \left| \widehat{(r^n)}_l(s) \right| \le \varepsilon^2  \left( \left| \widehat{(\left| \psi^n \right|^2)}_l(s) \right| +  \left| \widehat{(\left| \psi^n \right|^2)}_l(0) \right| + \int_0^s \left| \widehat{(\left| \psi^n \right|^2)}_l'(\theta) \right| d\theta \right). \label{lm1.4}
\end{align}
In order to estimate $\partial_s|\psi^n(s)|^2 = \partial_s|\psi(t_n + s)|^2$, multiplying both sides of \eqref{psi1d} by $\overline{\psi}$ and taking the imaginary part, we have
\begin{align}
    i \partial_t \rho(x,t) + \partial_x \left[ \overline{\psi(x,t)} \partial_x \psi(x,t) - \psi(x,t) \overline{\partial_x \psi(x,t)}\right]=0, \quad x \in \Omega, \quad t > 0,
\end{align}
where $\rho(x,t):= \left| \psi(x,t) \right|^2$. Under the assumption (A), we have
\begin{align}
    \left\| \partial_t^k \rho \right\|_{L^{\infty}([0,T];H^{m_0 -2k })} \lesssim 1, \qquad k = 0,1,2. \label{rho}
\end{align}
Then multiplying the square of \eqref{lm1.4} by $1 + \mu_l^2 + \ldots + \mu_l^{2m_0-4}$, and summing them up for $l \in \mathbb{Z}$, we get
\begin{align}
    \left\| r^n(\cdot,s) \right\|_{H^{m_0 -2}}^2 & \lesssim  \varepsilon^4 \left(  \left\| \rho^n (\cdot,s) \right\|_{H^{m_0 -2 }}^2 + \left\| \rho^n (\cdot,0) \right\|_{H^{m_0-2}}^2  + \int_0^s \left\| \partial_\theta \rho^n (\cdot,\theta) \right\|_{H^{m_0 -2}}^2 d\theta \right) \nonumber \\
    & \lesssim \varepsilon^4 \left(  \left\| \rho \right\|_{L^{\infty}([0,T];H^{m_0-2})}^2 + s^2\left\| \partial_t \rho \right\|_{L^{\infty}([0,T];H^{m_0-2})}^2 \right), \label{lm1.5}
\end{align}
where $\rho^n(x,s) := \rho(x,t_n + s)$ for $0 \le s \le \tau$.

Combining \eqref{rho} and \eqref{lm1.5}, we immediately obtain 
\begin{equation}
    \left\| r^n \right\|_{L^{\infty}([0,\tau];H^{m_0 -2})} \lesssim \varepsilon^2. \label{est_rn}
\end{equation}
Similarly, by differentiating the Fourier coefficient $\widehat{(r^n)}_l (s)$, it arrives at
\begin{align*}
\widehat{(r^n)}'_l(s) =  \int_0^s \frac{\cos(\omega_l(s - \theta))}{\varepsilon^2 }\widehat{(\left| \psi^n\right|^2)}_l(\theta) d \theta.
\end{align*}
Then combining integration by parts and \eqref{rho}, we can obtain the estimate
\begin{align}
     \left\| \partial_s r^n \right\|_{L^{\infty}([0,\tau];H^{m_0-2})} \lesssim 1, \qquad n \ge 0. \label{est_drn}
\end{align}
Recalling the equation of $r^n$ and the corresponding Duhamel's formula, i.e.,
\begin{align*}
& \varepsilon^2 \partial_{ss} r^n(x,s) = \Delta r^n(x,s) - \frac{1}{\varepsilon^2} r^n(x,s) +  |\psi^n(x,s)|^2, \\
& \widehat{(r^n)}_l (s) = \int_0^s \frac{\sin(\omega_l (s- \theta))}{\varepsilon^2 \omega_l} \widehat{(|\psi^n|^2)}_l(\theta) d\theta, 
\end{align*}
we have
\begin{align*}
   \left\| r^n \right\|_{L^{\infty}([0,\tau];H^{m_0})} \lesssim  \left\| \rho \right\|_{L^{\infty}([0,T];H^{m_0})} \lesssim 1, 
\end{align*}
which, together with \eqref{est_rn} and \eqref{est_drn}, immediately implies 
\begin{align}
&\varepsilon^2 \left\| \partial_{ss} r^n \right\|_{L^{\infty}([0,\tau];H^{m_0 -2})} \nonumber \\
&\lesssim \left\| r^n \right\|_{L^{\infty}([0,\tau];H^{m_0 })} + \frac{1}{\varepsilon^2} \left\| r^n \right\|_{L^{\infty}([0,\tau];H^{m_0 -2})} + \left\| \rho \right\|_{L^{\infty}([0,T];H^{m_0})} \lesssim 1. \label{est_ddrn}
\end{align}
Combining the estimates \eqref{est_rn}-\eqref{est_ddrn}, we arrive at the conclusion \eqref{lm1r}.
\end{proof}

\subsection{Error functions and the corresponding error equations}
In order to prove Theorem~\ref{theorem1}, we define another set of error functions
\begin{align*}
& e_{\psi,N}^n (x) := (P_N \psi)(x,t_n) - ( I_N \Psi^n ) (x), \quad x \in \overline{\Omega}, \quad n \ge 0, \\
& e_{\phi,N}^n (x) := (P_N \phi)(x,t_n) - ( I_N \Phi^n ) (x), \quad \dot{e}_{\phi,N}^n := (P_N \partial_t \phi)(x,t_n) - ( I_N \dot{\Phi}^n ) (x). 
\end{align*} 
By the regularity of the solutions in assumption (A), we have
\begin{equation}\label{Tri_err}
\begin{aligned}
&\left\| e^n_\psi \right\|_{H^1} \le \left\| \psi(\cdot,t_n) - (P_N \psi)(\cdot,t_n) \right\|_{H^1} + \left\| e^n_{\psi,N} \right\|_{H^1} \lesssim h^{m_0 - 1} + \left\| e_{\psi,N}^n \right\|_{H^1} , \\
& \left\| e^n_\phi \right\|_{H^1} \le \left\| \phi(\cdot,t_n) - (P_N \phi)(\cdot,t_n) \right\|_{H^1} + \left\| e^n_{\phi,N} \right\|_{H^1} \lesssim h^{m_0 -1} + \left\| e_{\phi,N}^n \right\|_{H^1} , \\
& \left\| \dot{e}^n_\phi \right\|_{H^1} \le \left\| \partial_t \phi(\cdot,t_n) - (P_N \partial_t \phi)(\cdot,t_n) \right\|_{H^1} + \left\| \dot{e}_{\phi,N}^n \right\|_{H^1} \lesssim \displaystyle \frac{h^{m_0 - 1}}{\varepsilon^2} + \left\| \dot{e}^n_{\phi,N} \right\|_{H^1}.
\end{aligned}
\end{equation}
Thus, we only need to prove estimates \eqref{thm1err1} and \eqref{thm1err2} with $e_\psi^n$, $e^n_\phi$ and $\dot{e}^n_\phi$
replaced by $e^n_{\psi,N}$, $e^n_{\phi,N}$ and $\dot{e}_{\phi,N}^n$, respectively. 

Before we introduce the error functions, we first give out the formula of the exact solutions $\psi$ and $\phi$, and reformulate the MTI-FP method \eqref{MTIFP}-\eqref{init00}.

\begin{Lemma}[Formula of exact solutions]\label{lemma:exactsol}
 Denote the Fourier expansion of the exact solutions $\psi(x,t)$ and $\phi(x,t)$ of the KGS system \eqref{psi1d}-\eqref{phi1d} as 
\begin{align}
\psi(x,t) = \sum_{l \in \mathbb{Z}} \widehat{\psi}_l(t) e^{i \mu_l (x-a)},\quad \phi(x,t) = \sum_{l \in \mathbb{Z}} \widehat{\phi}_l(t) e^{i \mu_l (x-a)}, \quad x \in \overline{\Omega}, \quad t \ge0,
\end{align}
then we have
\begin{equation}\label{FSexact}
\left\{\begin{aligned}
\widehat{\psi}_l(t_{n+1}) = &\ e^{-i\mu_l^2 \tau} \widehat{\psi}_l(t_n) + i e^{-i\mu_l^2 \tau} \int_0^\tau e^{i\left( \mu_l^2 + \frac{1}{\varepsilon^2} \right) \theta} \widehat{(v^n \psi^n)}_l(\theta) d \theta  \\
&\ + i e^{-i\mu_l^2 \tau} \int_0^\tau e^{i\left( \mu_l^2 - \frac{1}{\varepsilon^2} \right) \theta} \widehat{(\overline{v^n} \psi^n)}_l(\theta) d \theta + i \int_0^\tau e^{i\mu_l^2(\theta - \tau)} \widehat{(r^n \psi^n)}_l (\theta) d\theta, \\
\widehat{\phi}_l(t_{n+1}) = &\ \cos(\omega_l\tau) \widehat{\phi}_l(t_n) + \frac{\sin(\omega_l \tau)}{\omega_l} \widehat{\phi}_l' (t_n) + \int_0^{\tau} \frac{\sin(\omega_l (\tau - \theta))}{\varepsilon^2 \omega_l} \widehat{(  \rho^n )}_l(\theta) d\theta,  \\
\widehat{\phi}_l'(t_{n+1}) = &\  \cos(\omega_l \tau) \widehat{\phi}_l'(t_n) -\omega_l \sin(\omega_l \tau) \widehat{\phi}_l(t_n) + \int_0^{\tau} \frac{\cos(\omega_l (\tau - \theta))}{\varepsilon^2} \widehat{(\rho^n)}_l(\theta) d\theta,
\end{aligned}\right.
\end{equation}
where $\psi^n(\theta)= \psi(t_n + \theta)$ and $\rho^n(\theta)= \left| \psi (t_n + \theta)\right|^2$.
\end{Lemma}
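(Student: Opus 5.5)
The statement is Lemma~\ref{lemma:exactsol}, giving Fourier-coefficient formulas for the exact solution over one step $[t_n,t_{n+1}]$. I will apply Duhamel's principle mode by mode, once directly to the Klein--Gordon equation \eqref{phi1d} and once to the Schr\"odinger equation in its decomposed form \eqref{psi1d(D)}, and then evaluate at $s=\tau$.

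\emph{Step 1: the formula for $\widehat{\psi}_l$.} On $[t_n,t_{n+1}]$ set $\psi^n(x,s)=\psi(x,t_n+s)$. By the construction \eqref{ansatz1d}--\eqref{psi1d(D)}, $\phi(x,t_n+s)=e^{is/\varepsilon^2}v^n+e^{-is/\varepsilon^2}\overline{v^n}+r^n$ is exactly the solution of \eqref{phi1d} with data \eqref{I_n2}. The decomposition is consistent with \eqref{I_n2}:
\[
v^n(0)+\overline{v^n(0)}=\phi_0^n, \qquad r^n(0)=0,
\]
and the time derivative at $s=0$ is $\tfrac{i}{\varepsilon^2}(v^n(0)-\overline{v^n(0)})=\tfrac{1}{\varepsilon^2}\phi_1^n$, using $\partial_s v^n(0)=\partial_s r^n(0)=0$. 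Uniqueness of the linear problems \eqref{v1d}--\eqref{r1d} then gives equivalence, with $\psi^n$ the exact $\psi$.

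Taking the $l$-th Fourier coefficient of \eqref{psi1d(D)} gives the ODE
\[
i\widehat{\psi^n_l}{}'-\mu_l^2\widehat{\psi^n_l}+e^{is/\varepsilon^2}\widehat{(v^n\psi^n)}_l+e^{-is/\varepsilon^2}\widehat{(\overline{v^n}\psi^n)}_l+\widehat{(r^n\psi^n)}_l=0 .
\]
Multiply by the integrating factor $e^{i\mu_l^2 s}$ and integrate from $0$ to $\tau$. Combining the exponentials $e^{i\mu_l^2\theta}e^{\pm i\theta/\varepsilon^2}$ yields the phases $e^{i(\mu_l^2\pm 1/\varepsilon^2)\theta}$. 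Using $\psi^n(\cdot,0)=\psi(\cdot,t_n)$ and $\psi^n(\cdot,\tau)=\psi(\cdot,t_{n+1})$ then gives the first identity in \eqref{FSexact}.

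\emph{Step 2: the formulas for $\widehat{\phi}_l$ and $\widehat{\phi}_l'$.} Take the $l$-th Fourier coefficient of \eqref{phi1d}:
\[
\varepsilon^2\widehat{\phi}_l''+\left(\mu_l^2+\frac{1}{\varepsilon^2}\right)\widehat{\phi}_l=\widehat{\rho}_l .
\]
Dividing by $\varepsilon^2$ gives $\widehat{\phi}_l''+\omega_l^2\widehat{\phi}_l=\varepsilon^{-2}\widehat{\rho}_l$, since $\omega_l^2=\varepsilon^{-4}(1+\varepsilon^2\mu_l^2)$. Variation of constants on $[t_n,t_n+s]$ gives
\[
\widehat{\phi}_l(t_n+s)=\cos(\omega_l s)\widehat{\phi}_l(t_n)+\frac{\sin(\omega_l s)}{\omega_l}\widehat{\phi}_l'(t_n)+\int_0^s\frac{\sin(\omega_l(s-\theta))}{\varepsilon^2\omega_l}\widehat{(\rho^n)}_l(\theta)\,d\theta .
\]
Differentiate in $s$; the boundary term from the integral vanishes because $\sin 0=0$. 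Setting $s=\tau$ in both expressions produces the remaining two identities.

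\emph{Justification and main obstacle.} The steps are routine. The only point needing care is justifying termwise differentiation of the Fourier series and the regularity of the coefficients, and this is covered by assumption (A). Since $\psi\in C([0,T];H^{m_0})$, the equation itself gives $\psi\in C^1([0,T];H^{m_0-2})$. Likewise $\phi\in C^1$ and its equation give $\phi\in C^2([0,T];H^{m_0-2})$. Hence each Fourier coefficient is $C^1$, respectively $C^2$, in time, and the products $v^n\psi^n$, $r^n\psi^n$ and $\rho^n$ are continuous in time with values in $L^2$ (indeed $H^{m_0-2}$), which bounds the relevant Fourier coefficients. With this in place, the scalar ODE manipulations above are fully justified.
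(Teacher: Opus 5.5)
Your proof is correct and follows the same route as the paper. You take Fourier coefficients on $[t_n,t_{n+1}]$, insert the decomposition \eqref{ansatz1d} into the Schr\"odinger part, apply Duhamel's principle (the integrating factor $e^{i\mu_l^2 s}$ for $\widehat{\psi}_l$, variation of constants with frequency $\omega_l$ for $\widehat{\phi}_l$) and set $s=\tau$; along the way you also supply details the paper omits, notably the check that the simplified transmission conditions reproduce the data \eqref{I_n2}, so that $e^{is/\varepsilon^2}v^n+\mathrm{c.c.}+r^n$ is exactly $\phi(\cdot,t_n+s)$.
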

\begin{proof}
    Recalling the problem \eqref{psi1d}-\eqref{phi1d} on the interval $[t_n,t_{n+1}]$ and taking the Fourier transform, we have for $l \in \mathbb{Z}$ and $0 \le s \le \tau$
\begin{equation*}
\left\{\begin{aligned}
    & i \widehat{\psi}_l'(t_n + s) - \mu_l^2 \widehat{\psi}_l (t_n + s) + \widehat{(\phi \psi)}_l(t_n + s) = 0, \\
    & \varepsilon^2 \widehat{\phi}_l''(t_n + s) + \mu_l^2 \widehat{\phi}_l ( t_n + s) + \frac{1}{\varepsilon^2} \widehat{\phi}_l(t_n + s) = \widehat{( \left| \psi \right|^2)}_l(t_n + s).
\end{aligned}\right. 
\end{equation*}

Noticing the decomposition \eqref{ansatz1d} and taking $s = \tau$, we could prove \eqref{FSexact} by the Duhamel's principle and the details are omitted here for brevity. 
\end{proof}

\begin{Lemma}[Reformulation of MTI-FP]\label{lemma_reformulation} For $n \ge 0$, expanding numerical solutions $(I_N\Phi^n)(x)$ and $(I_N \dot{\Phi}^n)(x)$ into Fourier series as
\begin{align}\label{Reformulation}
(I_N \Phi^n)(x) = \sum_{l = -N/2}^{N/2-1} \widetilde{(\Phi^n)}_l e^{i\mu_l(x-a)},\quad (I_N \dot{\Phi}^n)(x) = \sum_{l = -N/2}^{N/2-1} \widetilde{(\dot{\Phi}^n)}_l e^{i\mu_l(x-a)},
\end{align}
then we have for $l = -\frac{N}{2},\ldots,\frac{N}{2}-1$,
\begin{equation}
\label{ReCoeff}
\left\{\begin{aligned}
\widetilde{(\Phi^{n+1})}_l = &\ \cos(\omega_l\tau) \widetilde{(\Phi^n)}_l + \frac{\sin(\omega_l \tau)}{\omega_l} \widetilde{(\dot{\Phi}^n)}_l \\
&\ + p_l(\tau) \widetilde{(\left| \Psi^n \right|^2)}_l + 2 q_l(\tau) \widetilde{( \mathrm{Re}\{\overline{\Psi^n} \dot{\Psi}^n\})}_l, \\
\widetilde{(\dot{\Phi}^{n+1})}_l = &\ \cos(\omega_l\tau) \widetilde{(\dot{\Phi}^n)}_l -\omega_l \sin(\omega_l \tau) \widetilde{(\Phi^n)}_l  \\
&\ + p_l'(\tau) \widetilde{(\left| \Psi^n \right|^2)}_l + 2 q_l'(\tau) \widetilde{( \mathrm{Re}\{\overline{\Psi^n} \dot{\Psi}^n\})}_l. 
\end{aligned}\right.
\end{equation}
\end{Lemma}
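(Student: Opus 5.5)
The plan is a direct computation on the discrete Fourier coefficients. The aim is to show that the MTI-FP update \eqref{MTIFP} for $\Phi^{n+1}$ and $\dot{\Phi}^{n+1}$ collapses to the exact ``free Klein--Gordon'' propagator acting on $(\Phi^n,\dot{\Phi}^n)$, plus the $r$-contribution. The $r$-part needs no work: by \eqref{MTIFPvr} and \eqref{MTIFP.Cr}, the coefficients $\widetilde{(\mathbf{r}^{n,1})}_l$ and $\widetilde{(\dot{\mathbf{r}}^{n,1})}_l$ are already the terms $p_l(\tau)\widetilde{(|\Psi^n|^2)}_l+2q_l(\tau)\widetilde{(\mathrm{Re}\{\overline{\Psi^n}\dot\Psi^n\})}_l$ and their primed analogues. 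So everything reduces to identifying the discrete Fourier coefficients of $e^{i\tau/\varepsilon^2}\mathbf{v}^{n,1}+e^{-i\tau/\varepsilon^2}\overline{\mathbf{v}^{n,1}}$, and of the analogous combination in the $\dot{\phi}$ update.

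First I would record the conjugation rule for discrete Fourier coefficients. For $\mathbf{u}\in Y_N$ one has $\widetilde{(\overline{\mathbf{u}})}_l=\overline{\widetilde{\mathbf{u}}_{-l}}$, with indices understood modulo $N$; in particular $l=-N/2$ is mapped to itself. Since $\Phi^n$ and $\dot\Phi^n$ are real (by induction from the real initial data \eqref{init00}), $\overline{\widetilde{(\Phi^n)}_{-l}}=\widetilde{(\Phi^n)}_l$, and the same holds for $\dot\Phi^n$. Next I would note that $a_l(\tau)$, $a_l'(\tau)$ and $\omega_l$ depend on $l$ only through $\mu_l^2$, so they are even in $l$; this also holds at the aliased index $l=-N/2$, because $\mu_{-N/2}^2=\mu_{N/2}^2$. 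Combining these with \eqref{MTIIni1} and \eqref{MTIFP.Cv}, the $l$-th coefficient of $\overline{\mathbf{v}^{n,1}}$ equals $\overline{a_l(\tau)}\cdot\frac12\bigl(\widetilde{(\Phi^n)}_l+i\varepsilon^2\widetilde{(\dot\Phi^n)}_l\bigr)$, while that of $\mathbf{v}^{n,1}$ equals $a_l(\tau)\cdot\frac12\bigl(\widetilde{(\Phi^n)}_l-i\varepsilon^2\widetilde{(\dot\Phi^n)}_l\bigr)$.

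The key algebraic step is to evaluate $A_l:=e^{i\tau/\varepsilon^2}a_l(\tau)$. Using $\sqrt{1+\varepsilon^2\mu_l^2}=\varepsilon^2\omega_l$ gives
\[
\lambda_l^\pm=-\frac{1}{\varepsilon^2}\mp\omega_l,
\qquad \lambda_l^+-\lambda_l^-=-2\omega_l,
\]
and hence
\[
e^{i\tau/\varepsilon^2}e^{i\tau\lambda_l^\mp}=e^{\pm i\omega_l\tau}.
\]
Substituting into \eqref{Coeffa} yields $A_l=\cos(\omega_l\tau)+\frac{i\sin(\omega_l\tau)}{\varepsilon^2\omega_l}$. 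Therefore
\[
\tfrac12\Bigl[(A_l+\overline{A_l})\widetilde{(\Phi^n)}_l-i\varepsilon^2(A_l-\overline{A_l})\widetilde{(\dot\Phi^n)}_l\Bigr]
=\cos(\omega_l\tau)\widetilde{(\Phi^n)}_l+\frac{\sin(\omega_l\tau)}{\omega_l}\widetilde{(\dot\Phi^n)}_l,
\]
which is the first identity of \eqref{ReCoeff}.

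For $\dot\Phi^{n+1}$ the same computation applies, with $e^{i\tau/\varepsilon^2}\bigl(a_l'(\tau)+\frac{i}{\varepsilon^2}a_l(\tau)\bigr)$ in place of $A_l$. Using \eqref{Coeffda} and $\lambda_l^+\lambda_l^-=\frac{1}{\varepsilon^4}-\omega_l^2$, I expect this quantity to simplify to $\frac{i}{\varepsilon^2}\cos(\omega_l\tau)-\omega_l\sin(\omega_l\tau)$. After taking real and imaginary parts as before, this yields $\cos(\omega_l\tau)\widetilde{(\dot\Phi^n)}_l-\omega_l\sin(\omega_l\tau)\widetilde{(\Phi^n)}_l$; this simplification is a quick check to carry out. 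The only mildly delicate point is the bookkeeping at the Nyquist index $l=-N/2$ under conjugation, which the evenness in $\mu_l^2$ handles. The rest is routine trigonometric algebra, so I do not expect any real obstacle.
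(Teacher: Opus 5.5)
Your proposal is correct and follows the same route as the paper. Both take the discrete Fourier coefficients of the update for $\Phi^{n+1}$, substitute $\widetilde{(\mathbf{v}^{n,1})}_l=a_l(\tau)\widetilde{(\mathbf{v}^{n,0})}_l$ and $\mathbf{v}^{n,0}=\frac12(\Phi^n-i\varepsilon^2\dot\Phi^n)$, and reduce to the identities $\mathrm{Re}(e^{i\tau/\varepsilon^2}a_l(\tau))=\cos(\omega_l\tau)$ and $\mathrm{Im}(e^{i\tau/\varepsilon^2}a_l(\tau))=\sin(\omega_l\tau)/(\varepsilon^2\omega_l)$. Your second identity, $e^{i\tau/\varepsilon^2}(a_l'(\tau)+\frac{i}{\varepsilon^2}a_l(\tau))=\frac{i}{\varepsilon^2}\cos(\omega_l\tau)-\omega_l\sin(\omega_l\tau)$, is correct; it is exactly what the paper's omitted $\dot\Phi^{n+1}$ case needs. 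Your explicit use of the realness of $\Phi^n,\dot\Phi^n$ and of the conjugation rule for discrete Fourier coefficients spells out a step that the paper's display \eqref{lm2.2} uses only implicitly.
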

\begin{proof}
Recalling the MTI-FP method \eqref{MTIFP}-\eqref{MTIFPvr}, we have for $l = -\frac{N}{2},\ldots,\frac{N}{2}-1$
\begin{align}
\widetilde{\left(\Phi^{n+1}\right)}_l = e^{i\tau / \varepsilon^2}\widetilde{\left(\mathbf{v}^{n,1}\right)}_l + e^{-i\tau / \varepsilon^2}\widetilde{\left(\overline{\mathbf{v}^{n,1}}\right)}_l + \widetilde{\left(\mathbf{r}^{n,1}\right)}_l. \label{Lm2.1}
\end{align}
Plugging the formula \eqref{MTIFP_Covrp} for $\widetilde{(\mathbf{v}^{n,1})}_l$ and $\widetilde{(\mathbf{r}^{n,1})}_l$ and the initial data \eqref{MTIFP_Ini} into \eqref{Lm2.1}, we obtain for $l = -\frac{N}{2},\ldots,\frac{N}{2}-1$
\begin{align}
    \widetilde{(\Phi^{n+1})}_l = &\  \mathrm{Re} \left( e^{i\tau / \varepsilon^2} a_l(\tau)\right) \widetilde{(\Phi^n)}_l + \varepsilon^2 \mathrm{Im} \left( e^{i\tau / \varepsilon^2} a_l(\tau) \right) \widetilde{(\dot{\Phi}^n)}_l \nonumber \\
    &\ + p_l(\tau) \widetilde{\left(\left| \Psi^n \right|^2\right)}_l + 2 q_l(\tau) \widetilde{\left( \mathrm{Re}\left(\overline{\Psi^n} \dot{\Psi}^n\right)\right)}_l, \label{lm2.2}
\end{align}
where $\mathrm{Im}(f)$ represents the imaginary part of the function $f$.

Noticing the coefficient \eqref{Coeffa}, we have 
\begin{equation*}
    \mathrm{Re} \left( e^{i\tau / \varepsilon^2} a_l(\tau)\right) = \cos(\omega_l\tau), \qquad \mathrm{Im} \left( e^{i\tau / \varepsilon^2} a_l(\tau)\right) = \frac{\sin(\omega_l \tau)}{\varepsilon^2 \omega_l},
\end{equation*}
which combined with \eqref{lm2.2} completes the proof of the first equation in \eqref{ReCoeff}. The proof for the second equation follows similarly, and we omit the details here for brevity.
\end{proof}

Then, we introduce the local truncation error functions as
\begin{equation}\label{Local}
\begin{aligned}
    & \xi_\psi^n(x) = \sum_{l = -N/2}^{N/2-1} \widehat{(\xi_\psi^n)}_l e^{i\mu_l(x-a)}, \qquad x \in \overline{\Omega},\qquad n \ge 0, \\
    &\xi_\phi^n(x) = \sum_{l = -N/2}^{N/2-1} \widehat{(\xi_\phi^n)}_l e^{i\mu_l(x-a)}, \quad \dot{\xi}_\phi^n(x) = \sum_{l = -N/2}^{N/2-1} \widehat{(\dot{\xi}_\phi^n)}_l e^{i\mu_l(x-a)},
\end{aligned}
\end{equation}
where
\begin{subequations}\label{LocalCo}
\begin{align}
\widehat{(\xi^n_\psi)}_l := &\ \widehat{\psi}_l(t_{n+1}) -\left[  e^{-i\mu_l^2 \tau} \widehat{\psi}_l(t_n) +  c_l^+(\tau) \widehat{(v^n \psi^n)}_l(0) +  d_l^+(\tau) \widehat{(v^n \delta^n)}_l(0) \right. \nonumber \\
&\left. +  \ c_l^-(\tau) \widehat{(\overline{v^n} \psi^n)}_l(0) + d_l^-(\tau) \widehat{(\overline{v^n} \delta^n)}_l(0) \right] , \label{Localpsi} \\
\widehat{(\xi^n_\phi)}_l :=&\  \widehat{\phi}_l(t_{n+1}) - \left[ \cos(\omega_l \tau) \widehat{\phi}_l(t_n) +  \frac{\sin(\omega_l \tau)}{\omega_l} \widehat{\phi}_l'(t_n) + p_l(\tau) \widehat{(\left| \psi^n \right|^2)}_l(0) \right. \nonumber \\
& \left. \ + 2 q_l(\tau) \widehat{\left( \mathrm{Re}\left(\overline{\psi^n}\delta^n\right)\right)}_l(0) \right], \label{Localphi}\\
\widehat{(\dot{\xi}^n_\phi)}_l := &\ \widehat{\phi}_l'(t_{n+1})- \left[ \cos(\omega_l \tau) \widehat{\phi}_l'(t_n) -\omega_l \sin(\omega_l\tau) \widehat{\phi}_l(t_n) + p_l'(\tau) \widehat{(\left| \psi^n \right|^2)}_l(0) \right.\nonumber \\
& \left. \ + 2 q_l'(\tau) \widehat{\left( \mathrm{Re}\left(\overline{\psi^n}\delta^n\right)\right)}_l(0) \right], \label{Localdphi}
\end{align}
\end{subequations}
with the auxiliary function defined as
\begin{equation}\label{delta}
\begin{aligned} 
&\delta^n(x) = \sum_{l \in \mathbb{Z}} \widehat{(\delta^n)}_l e^{i\mu_l(x-a)},\qquad x \in \overline{\Omega},\\
&\widehat{(\delta^n)}_l := -i\frac{\sin(\mu_l^2\tau)}{\tau} \widehat{(\psi^n)}_l(0) + i \widehat{(\phi^n\psi^n)}_l(0).
\end{aligned}
\end{equation}
In addition, define the errors from the nonlinear terms as 
\begin{equation}\label{NL}
\begin{aligned}
    & \eta_\psi^n(x) = \sum_{l = -N/2}^{N/2-1} \widehat{(\eta_\psi^n)}_l e^{i\mu_l(x-a)}, \qquad x \in \overline{\Omega},\qquad n \ge 0, \\
    &\eta_\phi^n(x) = \sum_{l = -N/2}^{N/2-1} \widehat{(\eta_\phi^n)}_l e^{i\mu_l(x-a)}, \quad \dot{\eta}_\phi^n(x) = \sum_{l = -N/2}^{N/2-1} \widehat{(\dot{\eta}_\phi^n)}_l e^{i\mu_l(x-a)},
\end{aligned}
\end{equation}
where
\begin{subequations}\label{FNL}
\begin{align}
\widehat{(\eta_\psi^n)}_l := &\ c_l^+(\tau) \left[\widehat{\left(v^n \psi^n\right)}_l(0) -\widetilde{\left(\mathbf{v}^{n,0} \Psi^n\right)}_l \right] + d_l^+(\tau) \left[ \widehat{(v^n \delta^n)}_l(0) -\widetilde{\left(\mathbf{v}^{n,0} \dot{\Psi}^n\right)}_l \right] \nonumber \\
&\ + c_l^-(\tau) \left[ \widehat{\left(\overline{v^n} \psi^n\right)}_l(0)- \widetilde{\left(\overline{\mathbf{v}^{n,0}} \Psi^n\right)}_l \right] + d_l^-(\tau) \left[ \widehat{\left(\overline{v^n} \delta^n\right)}_l(0) -  \widetilde{\left(\overline{\mathbf{v}^{n,0}} \dot{\Psi}^n\right)}_l  \right],   \label{FNLpsi}\\
\widehat{\left(\eta_\phi^n\right)}_l := &\ p_l(\tau) \left[   \widehat{\left(\left| \psi^n \right|^2\right)}_l(0) - \widetilde{\left(\left| \Psi^n \right|^2\right)}_l \right] \nonumber \\
& \ + 2 q_l(\tau) \left[  \widehat{\left( \mathrm{Re}\left(\overline{\psi^n}\delta^n\right)\right)}_l(0) - \widetilde{\left( \mathrm{Re}\left(\overline{\Psi^n} \dot{\Psi}^n\right)\right)}_l \right], \label{FNLphi}\\
\widehat{\left(\dot{\eta}_\phi^n\right)}_l := &\ p_l'(\tau) \left[ \widehat{\left(\left| \psi^n \right|^2\right)}_l(0) - \widetilde{\left(\left| \Psi^n \right|^2\right)}_l \right] \nonumber \\
& \ + 2 q_l'(\tau) \left[ \widehat{\left( \mathrm{Re}\left(\overline{\psi^n}\delta^n\right)\right)}_l(0) - \widetilde{\left( \mathrm{Re}\left(\overline{\Psi^n} \dot{\Psi}^n\right)\right)}_l  \right]. \label{FNLdphi}
\end{align}
\end{subequations}
Subtracting the exact flow \eqref{FSexact} from the numerical flow \eqref{MTIFP.CPsi} and \eqref{ReCoeff}, we obtain the following error equations in the Fourier space
\begin{equation}\label{Erreq}
\begin{aligned}
  & \widehat{(e^{n+1}_{\psi,N})}_l = e^{-i\mu_l^2 \tau} \widehat{(e^n_{\psi,N})}_l  + \widehat{(\xi^n_\psi)}_l + \widehat{(\eta^n_\psi)}_l , \qquad l = -\frac{N}{2},\ldots,\frac{N}{2}-1,  \\
  & \widehat{(e^{n+1}_{\phi,N})}_l = \cos(\omega_l\tau) \widehat{(e^n_{\phi,N})}_l + \frac{\sin(\omega_l\tau)}{\omega_l} \widehat{(\dot{e}^n_{\phi,N})}_l + \widehat{(\xi^n_\phi)}_l + \widehat{(\eta^n_\phi)}_l  ,\\
  & \widehat{(\dot{e}^{n+1}_{\phi,N})}_l = -\omega_l \sin(\omega_l\tau) \widehat{(e^n_{\phi,N})}_l + \cos(\omega_l \tau) \widehat{(\dot{e}^n_{\phi,N})}_l + \widehat{(\dot{\xi}^n_\phi)}_l + \widehat{(\dot{\eta}^n_\phi)}_l.
\end{aligned}
\end{equation}

\subsection{Energy estimates for error functions}
For the local truncation error functions \eqref{Local}-\eqref{LocalCo}, we have the following estimates.
\begin{Lemma}\label{lemma:local}
Under the assumption (A), when $0 < \tau \le \tau_1$, we have two independent estimates for $0 < \varepsilon \le 1$
    \begin{align}
        \mathcal{E}(\xi^n_\psi,\xi^n_\phi,\dot{\xi}^n_\psi) \lesssim \frac{\tau^6}{\varepsilon^6} ,\qquad \mathcal{E}(\xi^n_\psi,\xi^n_\phi,\dot{\xi}^n_\psi) \lesssim \tau^2 \varepsilon^2,\qquad n = 0,1,\ldots,\frac{T}{\tau}-1. \label{est_lol}
    \end{align}
\end{Lemma}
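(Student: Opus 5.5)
The plan is to estimate each of the three local truncation errors $\xi^n_\psi$, $\xi^n_\phi$, $\dot{\xi}^n_\phi$ separately in the Fourier space. I will compare the exact Duhamel formulas of Lemma~\ref{lemma:exactsol} with the definitions \eqref{LocalCo}. Then I weight by $(1+\mu_l^2)$ (and by $\mu_l^2(1+\mu_l^2)$ for the $\partial_x\xi_\phi$ part of the energy functional \eqref{enefun}) and sum over $l$. Each error is a Taylor remainder of the integrands $f(\theta)$ around $\theta=0$, namely
\[
f(\theta)=f(0)+\theta f'(0)+\int_0^\theta(\theta-\sigma)f''(\sigma)\,d\sigma .
\]
On top of this there is the discrepancy between $f'(0)$ and its surrogate built from $\delta^n$ in \eqref{delta}. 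I will use the bound
\[
\Bigl\|\delta^n-\partial_s\psi^n(0)\Bigr\|_{H^k}\lesssim \tau\|\psi^n(0)\|_{H^{k+4}},
\]
which holds because $|\sin(\mu_l^2\tau)/\tau-\mu_l^2|\le \tau^2\mu_l^6/6$ and $\min\{\mu_l^2,\tau^2\mu_l^6\}\lesssim \tau\mu_l^4$. This requires the regularity $m_0\ge5$ in (A).

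For $\xi_\phi,\dot\xi_\phi$, the linear part is exact (Lemma~\ref{lemma_reformulation}). Only the integral $\int_0^\tau \frac{\sin(\omega_l(\tau-\theta))}{\varepsilon^2\omega_l}[\widehat{\rho^n}_l(\theta)-\widehat{\rho^n}_l(0)-\theta\widehat{\rho^n}_l'(0)]\,d\theta$ survives, plus the $\delta^n$ correction multiplied by $q_l$. I will produce two bounds for these kernels.
\begin{itemize}
\item First bound: use $|\sin(\omega_l(\tau-\theta))|/(\varepsilon^2\omega_l)\le\tau/\varepsilon^2$ together with $\|\partial_t^2\rho\|_{H^{m_0-4}}\lesssim1$ from \eqref{rho}. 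This gives $\|\xi_\phi\|_{H^2}\lesssim\tau^4/\varepsilon^2$ and $\varepsilon^2\|\dot\xi_\phi\|_{H^1}\lesssim\tau^3$. Consequently $\varepsilon^{-2}\|\xi_\phi\|_{H^1}^2\lesssim\tau^8/\varepsilon^6\le\tau^6/\varepsilon^6$.
\item Second bound: use $1/(\varepsilon^2\omega_l)\le1$ and integrate by parts in $\theta$ once, exactly as in the proof of Lemma~\ref{lemma:prior}, to gain a factor $\varepsilon^2$. This gives $\|\xi_\phi\|_{H^2}\lesssim\varepsilon^2\tau^2$ and $\varepsilon^2\|\dot\xi_\phi\|_{H^1}\lesssim\varepsilon^2\tau$, hence a contribution $\lesssim\tau^2\varepsilon^2$ to $\mathcal{E}$.
\end{itemize}

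For $\xi_\psi$, the integrands $\widehat{(v^n\psi^n)}_l(\theta)$ involve $\partial_s v^n$, which is $O(1)$ but not zero, whereas the scheme uses $\partial_s v^n(0)=0$. I will split $v^n(\theta)=v^n(0)+\int_0^\theta\partial_sv^n$ and use the Lemma~\ref{lemma:prior} bounds.
\begin{itemize}
\item Route $\tau^2/\varepsilon^2$: a second-order Taylor remainder in $\theta$ with $\varepsilon^2\|\partial_{ss}v^n\|\lesssim1$ gives the remainder $\lesssim\tau^3/\varepsilon^2$. Then $\varepsilon^{-2}\|\xi_\psi\|^2\lesssim\tau^6/\varepsilon^6$.
\item Route $\varepsilon^2$: the oscillatory factor $e^{\pm i\theta/\varepsilon^2}$ lets one integrate by parts once, gaining $\varepsilon^2$ with an $O(1)$ derivative. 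This gives $\|\xi_\psi\|_{H^1}\lesssim\tau\varepsilon^2$. The non-oscillatory $r^n$ term is $O(\tau\varepsilon^2)$ directly by \eqref{lm1r}. Hence $\varepsilon^{-2}\|\xi_\psi\|^2\lesssim\tau^2\varepsilon^2$.
\end{itemize}
The terms from $\psi^n$-derivatives (second derivative $O(1)$ by (A)) and the $\delta^n$ discrepancy ($O(\tau^3)$ after multiplication by $d_l^\pm=O(\tau^2)$) are harmless in both regimes, since $\tau^3\le\min\{\tau^3/\varepsilon^3,\tau\varepsilon\}$ fails only when... For these terms I instead check case by case against $\min$. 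They are $O(\tau^3)$ in $H^1$, giving $\tau^6/\varepsilon^2$. This is fine for the first bound but not obviously below $\tau^2\varepsilon^2$ when $\tau\gg\varepsilon$. There I will use $|c_l^\pm|,|d_l^\pm|\lesssim\varepsilon^2,\tau\varepsilon^2$ via integration by parts, which restores the $\varepsilon$ factor.

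The main obstacle I expect is precisely this $\varepsilon^{-2}$ weighting in \eqref{enefun} applied to $\xi_\psi$ and $\xi_\phi$: every non-oscillatory error must carry an explicit $\varepsilon^2$ factor. The delicate part is tracking which pieces are oscillatory, so that integration by parts yields $\varepsilon^2$. The $r^n\psi^n$ term is the one I would check first, since $r^n=O(\varepsilon^2)$ supplies the factor. The Schrödinger-type $\psi$ Taylor remainders need either the same trick or the observation that they are absent in the exact coefficient comparison.
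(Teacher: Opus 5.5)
Your plan is essentially the paper's proof. You compare with the Duhamel formulas of Lemma~\ref{lemma:exactsol} in Fourier space. For the $\tau^6/\varepsilon^6$ bound you take a second-order Taylor remainder, with $\varepsilon^2\|\partial_{ss}v^n\|$ and $\varepsilon^4\|\partial_{ss}r^n\|$ from Lemma~\ref{lemma:prior}. For the $\tau^2\varepsilon^2$ bound you integrate by parts once against the fast phase, and you treat $\partial_s\psi^n(0)-\delta^n$ through $|\sin x-x|$. Your variants for $\xi_\phi$ (the kernel bound $\tau/\varepsilon^2$ in the first route, and integrating the second-order remainder by parts in the second) also stay within the required bounds.

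Three statements need correcting, though none changes the strategy.

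\textbf{The bounds on $c_l^-$ and $d_l^-$.} The bounds $|c_l^\pm|\lesssim\varepsilon^2$ and $|d_l^\pm|\lesssim\tau\varepsilon^2$ fail for the minus sign uniformly in $l$. The phase $\mu_l^2-\varepsilon^{-2}$ vanishes or is small at modes with $\mu_l^2\approx\varepsilon^{-2}$; at exact resonance $|c_l^-(\tau)|=\tau$ and $|d_l^-(\tau)|=\tau^2/2$. You must integrate by parts only against $e^{\pm i\theta/\varepsilon^2}$ and keep $e^{i\mu_l^2\theta}$ in the amplitude. This gives $|d_l^\pm(\tau)|\lesssim\tau\varepsilon^2(1+\mu_l^2)$ and an extra $\varepsilon^2\mu_l^2$ term in the oscillatory remainder integrals (cf.\ \eqref{xi_psi_1}). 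The two lost derivatives are paid for by measuring $v^n\delta^n$ and $\partial_s(v^n\psi^n)$ in $H^3$, which $m_0\ge5$ allows. Note that $\|\delta^n\|_{H^3}\lesssim\|\psi\|_{H^5}+\|\phi\psi\|_{H^3}$ uniformly in $\tau$, since $|\sin(\mu_l^2\tau)/\tau|\le\mu_l^2$.

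\textbf{The size of $\partial_{ss}\psi^n$.} It is not $O(1)$: it contains $i\,\partial_t\phi\,\psi^n=O(\varepsilon^{-2})$. This term cancels in $\partial_{ss}\rho^n$ only because $\partial_t\phi$ is real. The error is harmless, because those remainders are $O(\tau^3/\varepsilon^2)$, which is inside the first route's budget. In the second route, after integrating by parts only $\partial_s(v^n\psi^n)=O(1)$ enters. However, the bookkeeping that treats the ``$\psi$-derivative terms'' as $O(\tau^3)$ is wrong and should be dropped.

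\textbf{Regularity for $\|\xi_\phi\|_{H^2}$.} Bounding it directly needs $\partial_t^2\rho\in H^2$, i.e.\ $m_0\ge6$. With $m_0=5$, absorb the extra $\mu_l$ into the kernel as the paper does. In the first route use $\mu_l/(\varepsilon^2\omega_l)\le\varepsilon^{-1}$; in the second use $\mu_l/(\varepsilon^2\omega_l^2)\le\varepsilon/2$.
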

\begin{proof}
We first derive two independent estimates for $\xi^n_\phi$. On the one hand, plugging the exact solution \eqref{FSexact} into \eqref{Localphi} and applying Taylor's expansion, we get
\begin{align*}
    \widehat{(\xi^n_\phi)}_l = \int_0^{\tau} \frac{\sin(\omega_l(\tau - \theta))}{\varepsilon^2 \omega_l} \theta^2 \left[ \int_0^1(1-s) \widehat{(\left| \psi^n \right|^2)}_l''(\theta s) ds \right] d\theta + 2 q_l(\tau) \widehat{(T_0)}_l,
\end{align*}
where 
\begin{equation*}
    T_0 (x):= \mathrm{Re}\left( \overline{\psi^n (x,0)} \Bigl( \partial_s \psi^n(x,0) - \delta^n(x) \Bigr) \right),
\end{equation*}
with $\delta^n$ defined in  \eqref{delta}.

Noticing $\frac{1}{\varepsilon^2 \omega_l} = \frac{1}{\sqrt{1 + \varepsilon^2 \mu_l^2}} \le 1$ and $\left| q_l(\tau) \right| \le \tau^2$ for $l = -\frac{N}{2},\ldots,\frac{N}{2}-1$, we have
\begin{align}
   \left| \widehat{\left(\xi^n_\phi\right)}_l \right| \le \tau^2 \int_0^{\tau} \int_0^1 \left| \widehat{\left(\left| \psi^n \right|^2\right)}_l''(\theta s) \right| ds  d\theta + 2 \tau^2 \left| \widehat{(T_0)}_l \right|. \label{lmphi1.1}
\end{align}
Thus, by using the Cauchy-Schwarz inequality, i.e.,
\begin{align*}
    \left( \int^\tau_0\int_0^1 \left| \widehat{\left(\left| \psi^n \right|^2\right)}_l''(\theta s) \right| ds d\theta \right)^2 & \le \int_0^\tau 1 d\theta \cdot \int_0^\tau  \left( \int_0^1 \left| \widehat{\left(\left| \psi^n \right|^2\right)}_l''(\theta s) \right| ds \right)^2 d\theta \\
    & \le \tau \int_0^\tau \int_0^1 \left| \widehat{\left(\left| \psi^n \right|^2\right)}_l''(\theta s) \right|^2 ds d\theta,
\end{align*}
and noticing $\rho^n(x,s) := \left| \psi^n (x,s) \right|^2$, we can obtain
\begin{align}
\left\| \xi^n_\phi \right\|_{H^1}^2 & \lesssim \tau^5 \int_0^\tau \int_0^1 \left\| \partial_{ss} \rho^n (\theta s ) \right\|_{H^1}^2 ds d\theta + \tau^4 \left\| T_0\right\|_{H^1}^2 \nonumber \\
& \lesssim \tau^6 \left\| \partial_{ss} \rho^n \right\|_{L^{\infty}([0,\tau];H^1)}^2 + \tau^4 \left\|  \overline{\psi^n (\cdot,0)} \Bigl( \partial_s \psi^n(\cdot,0) - \delta^n(\cdot) \Bigr) \right\|_{H^1}^2. \label{lmphi1.2}
\end{align}
Under the assumption (A), applying Sobolev embedding theorem \cite{evans2022partial}, we have
\begin{align}
\left\|  \overline{\psi^n (\cdot,0)} \Bigl( \partial_s \psi^n(\cdot,0) - \delta^n(\cdot) \Bigr) \right\|_{H^1}^2  & 
\lesssim \left\| \psi\right\|_{L^{\infty}([0,T];H^3)}^2 \left\|   \partial_s \psi^n(\cdot,0) - \delta^n(\cdot) \right\|_{H^1}^2  \nonumber \\
& \lesssim \left\|   \partial_s \psi^n(\cdot,0) - \delta^n(\cdot) \right\|_{H^1}^2. \label{lmphi1.3}
\end{align}
Noting
\begin{align*}
    \left| \sin(x) - x \right| \le 2 \left| x \right|^\alpha, \quad \forall x \in \mathbb{R},\quad 1 \le \alpha \le 3,
\end{align*}
with the definition of $\delta^n$ in \eqref{delta} and the Parseval's identity, we have
\begin{align}
    \left\|   \partial_s \psi^n(\cdot,0) - \delta^n(\cdot) \right\|_{H^1}^2 & = \sum_{l \in \mathbb{Z}} \left( 1 + \mu_l^2 \right) \left| \frac{\sin(\mu_l^2 \tau)}{\tau} - \mu_l^2 \right|^2 \left| \widehat{(\psi^n)}_l(0) \right|^2 \nonumber \\
    & \lesssim \tau^2 \sum_{l \in \mathbb{Z}} \mu_l^{10}  \left| \widehat{(\psi^n)}_l(0) \right|^2 \lesssim \tau^2 \left\| \psi^n \right\|_{L^{\infty}([0,\tau];H^5)}^2. \label{lmphi1.4}
\end{align}
Plugging \eqref{lmphi1.3} and \eqref{lmphi1.4} into \eqref{lmphi1.2} and noticing the assumption (A) and \eqref{rho}, we arrive at
\begin{align}
    \left\| \xi^n_\phi\right\|_{H^1}^2 \lesssim \tau^6, \qquad n \ge 0. \label{lmphi1.5}
\end{align}
Similarly, by noticing $\left| \frac{\mu_l}{\varepsilon^2 \omega_l}\right| \le 1/ \varepsilon
$ and $ \tau p_l'(\tau) + q_l'(\tau) \lesssim \tau^2 / \varepsilon^2$, we have
\begin{align}
    \left\| \partial_x \xi^n_\phi\right\|_{H^1}^2 \lesssim \frac{\tau^6}{\varepsilon^2}, \qquad \left\| \dot{\xi}^n_\phi\right\|_{H^1}^2 \lesssim \frac{\tau^6}{\varepsilon^4}, \qquad n \ge 0.  \label{lmphi1.6}
\end{align}

On the other hand, by applying Taylor's expansion truncated at the first order term, we get
\begin{align}
    \widehat{(\xi^n_{\phi})}_l = \int_0^{\tau}\frac{\sin(\omega_l(\tau - \theta))}{\varepsilon^2 \omega_l} \theta \left[ \int_0^1 \widehat{\left(\left| \psi^n \right|^2\right)}_l'(\theta s ) ds \right] d\theta - 2 q_l(\tau) \widehat{\left(\mathrm{Re}\left( \overline{\psi^n} \delta^n \right)\right)}_l(0). \label{lmphi2.1}
\end{align}
Define the following functions
\begin{equation}\label{lmphi2.2}
\begin{aligned}
    & Q_l(\theta) = \frac{\theta}{\varepsilon^2 \omega_l^2} \cos(\omega_l(\tau - \theta)) + \frac{\sin(\omega_l(\tau - \theta)) - \sin(\omega_l\tau)}{\varepsilon^2 \omega_l^3} ,\\
    & Q_l'(\theta) = \theta \frac{\sin(\omega_l(\tau - \theta))}{\varepsilon 
    ^2\omega_l} , \qquad l = -\frac{N}{2},\ldots,\frac{N}{2}-1.
\end{aligned}
\end{equation}
Plugging \eqref{lmphi2.2} into \eqref{lmphi2.1}, then we get
\begin{align*}
    \widehat{(\xi^n_\phi)}_l  = & \  \int_0^\tau Q_l'(\theta) \left[ \int_0^1 \widehat{\left(\left| \psi^n \right|^2\right)}_l'(\theta s) ds  \right] d\theta - 2 q_l(\tau) \widehat{\left(\mathrm{Re}\left( \overline{\psi^n} \delta^n \right)\right)}_l(0)  \\
    = &\ Q_l(\tau) \int_0^1 \widehat{\left(\left| \psi^n \right|^2\right)}_l'(\tau s ) ds - \int_0^\tau Q_l(\theta) \left[ \int_0^1 s \widehat{\left(\left| \psi^n \right|^2\right)}_l''(\theta s ) ds\right] d\theta \\
    & \ - 2 q_l(\tau) \widehat{\left(\mathrm{Re}\left( \overline{\psi^n} \delta^n \right)\right)}_l(0) .
\end{align*}
Noticing $\left| q_l(\tau)\right| + \left| Q_l(\theta) \right| \lesssim \tau \varepsilon^2$ for $0 \le \theta \le \tau$ and $l = - \frac{N}{2},\ldots,\frac{N}{2}-1$, we obtain
\begin{align*}
\left| \widehat{(\xi^n_\phi)}_l \right| \lesssim & \tau \varepsilon^2 \left( \int_0^1 \left| \widehat{\left(\left| \psi^n \right|^2\right)}_l'(\tau s ) \right| ds + \int_0^\tau \int_0^1 \left| \widehat{\left(\left| \psi^n \right|^2\right)}_l''(\theta s ) \right| ds d\theta \right. \\
& \left. + \left| \widehat{\left(\mathrm{Re}\left( \overline{\psi^n} \delta^n \right)\right)}_l(0)  \right| \right),
\end{align*}
which implies
\begin{align*}
    \left\| \xi^n_\phi \right\|_{H^1}^2 \lesssim \tau^2 \varepsilon^4 \left( \left\| \partial_s \rho^n \right\|_{L^{\infty}([0,\tau];H^1)}^2 + \tau^2 \left\| \partial_{ss} \rho^n \right\|_{L^{\infty}([0,\tau];H^1)}^2 + \left\| \overline{\psi^n}\delta^n \right\|_{L^{\infty}([0,\tau];H^1)}^2 \right).
\end{align*}
Noting the estimates \eqref{rho}, the definition of $\delta^n$ \eqref{delta} and the assumption (A), we apply Sobolev embedding theorem \cite{evans2022partial} and obtain from the above equation that
\begin{align}
    \left\| \xi^n_\phi \right\|_{H^1}^2 \lesssim \tau^2\varepsilon^4. \label{lmphi2.3}
\end{align}
Similarly, with details omitted here, we also have
\begin{align} \label{lmphi2.4}
    \left\| \partial_x \xi^n_\phi \right\|_{H^1}^2 \lesssim \tau^2\varepsilon^2,\qquad \left\| \dot{\xi}^n_\phi \right\|_{H^1}^2 \lesssim \tau^2.
\end{align}

Next, we are going to carry out the error estimates for $\widehat{(\xi^n_\psi)}_l$. On the one hand, plugging the exact solution \eqref{FSexact} into \eqref{Localpsi} and using Taylor's expansion, we have
\begin{align}
\widehat{(\xi^n_\psi)}_l = &\  i e^{-i\mu_l^2 \tau} \int_0^\tau e^{i(\mu_l^2 + \frac{1}{\varepsilon^2}) \theta} \theta^2 \left[ \int_0^1 (1-s) \widehat{\left(v^n \psi^n\right)}_l''(\theta s ) ds \right] d\theta  \nonumber \\
&\ + i e^{-i\mu_l^2 \tau} \int_0^\tau e^{i(\mu_l^2 - \frac{1}{\varepsilon^2}) \theta} \theta^2 \left[ \int_0^1 (1-s) \widehat{\left(\overline{v^n} \psi^n \right)}_l''(\theta s ) ds \right] d\theta \nonumber \\
    &\ + d_l^+(\tau) \widehat{\left(v^n(\partial_s \psi^n - \delta^n)\right)}_l(0)  + d_l^-(\tau) \widehat{\left(\overline{v^n}(\partial_s \psi^n - \delta^n)\right)}_l(0) \nonumber \\
    &\ + i \int_0^\tau e^{i\mu_l^2 (\theta - \tau)} \theta^2 \left[ \int_0^1(1-s) \widehat{\left(r^n \psi^n\right)}_l''(\theta s ) ds \right] d\theta \label{lmpsi1.1},
\end{align}
where we apply the homogeneous initial data $\partial_s v^n(x,0) = r^n(x,0) = \partial_s r^n (x,0) \equiv 0$.
Thus, by noting $\left| d_l^\pm (\tau) \right| \le \tau^2$, we have from \eqref{lmpsi1.1} that
\begin{align}
\left\| \xi^n_\psi \right\|_{H^1}^2 \lesssim &\ \tau^6 \left(  \left\| \partial_{ss} \left( v^n \psi^n  \right)\right\|_{L^{\infty}([0,\tau];H^1)}^2 + \left\| \partial_{ss} \left( \overline{v^n} \psi^n  \right)\right\|_{L^{\infty}([0,\tau];H^1)}^2 \right) \nonumber \\
   &\  + \tau^4 \left( \left\| v^n\left( \partial_s \psi^n - \delta^n\right) \right\|_{L^{\infty}([0,\tau];H^1)}^2 + \left\| \overline{v^n} \left( \partial_s \psi^n - \delta^n\right) \right\|_{L^{\infty}([0,\tau];H^1)}^2  \right) \nonumber \\
   &\ +  \tau^6  \left\| \partial_{ss} \left( r^n \psi^n  \right)\right\|_{L^{\infty}([0,\tau];H^1)}^2 .\label{lmpsi1.2}
\end{align}
Similar to the estimate of \eqref{lmphi1.3}, using Sobolev embedding theorem and \eqref{lmphi1.4}, and applying prior estimates \eqref{lm1v} and \eqref{lm1r} to \eqref{lmpsi1.2}, we obtain
\begin{align}
    \left\| \xi^n_\psi \right\|_{H^1}^2 \lesssim \frac{\tau^6}{\varepsilon^4}.  \label{lmpsi1.3}
\end{align}

On the other hand, applying Taylor's expansion truncated at the first order term, we obtain
\begin{align}
    \widehat{(\xi^n_\psi)}_l = &\ i e^{-i\mu_l^2 \tau} \int_0^\tau e^{i\left(\mu_l^2 + \frac{1}{\varepsilon^2}\right) \theta} \theta \left[ \int_0^1  \widehat{\left(v^n \psi^n \right)}_l'(\theta s ) ds \right] d\theta - d_l^+(\tau) \widehat{\left(v^n \delta^n\right)}_l(0) \nonumber \\
    &\ + i e^{-i\mu_l^2 \tau} \int_0^\tau e^{i\left(\mu_l^2 - \frac{1}{\varepsilon^2}\right) \theta} \theta \left[ \int_0^1 \widehat{\left(\overline{v^n} \psi^n \right)}_l'(\theta s ) ds \right] d\theta  - d_l^-(\tau) \widehat{\left(\overline{v^n} \delta^n\right)}_l(0) \nonumber \\
    &\  + i \int_0^\tau e^{i\mu_l^2 (\theta - \tau)}  \widehat{\left(r^n \psi^n\right)}_l(\theta) d\theta. \label{xi_psi_esti}
\end{align}

Then applying integration by parts twice, i.e.,
\begin{align}
& \int_0^\tau e^{i\left(\mu_l^2 + \frac{1}{\varepsilon^2}\right) \theta} \theta \left[ \int_0^1  \widehat{(v^n \psi^n )}_l'(\theta s ) ds \right] d\theta \nonumber \\
& = -i \varepsilon^2\int_0^\tau e^{i \mu_l^2 \theta} \theta \left[ \int_0^1  \widehat{(v^n \psi^n )}_l'(\theta s ) ds \right] d(e^{i\theta/\varepsilon^2}) \nonumber \\
& = -i \varepsilon^2 \tau e^{i( \mu_l^2+\frac{1}{\varepsilon^2})\tau} \int_0^1  \widehat{\left(v^n \psi^n \right)}_l'(\tau s) ds   + i \varepsilon^2 \int_0^\tau  e^{i\left(\mu_l^2 + \frac{1}{\varepsilon^2}\right) \theta} \theta \left[ \int_0^1  s \widehat{\left(v^n \psi^n \right)}_l''(\theta s ) ds \right] d \theta \nonumber \\
& \quad + i \varepsilon^2 \int_0^\tau e^{i(\mu_l^2 + \frac{1}{\varepsilon^2}) \theta} (1 + i \mu_l^2 \theta) \left[ \int_0^1  \widehat{\left(v^n \psi^n \right)}_l'(\theta s) ds  \right] d\theta  \nonumber \\
& = -i \varepsilon^2 \tau e^{i\left(\mu_l^2+\frac{1}{\varepsilon^2}\right)\tau} \int_0^1  \widehat{\left(v^n \psi^n \right)}_l'(\tau s) ds   + i \varepsilon^2 \int_0^\tau  e^{i\left(\mu_l^2 + \frac{1}{\varepsilon^2}\right) \theta} \widehat{\left(v^n\psi^n\right)}'_l(\theta) d \theta \nonumber \\
& \quad -  \varepsilon^2 \mu_l^2 \int_0^\tau e^{i\left(\mu_l^2 + \frac{1}{\varepsilon^2}\right) \theta}  \theta \left[ \int_0^1  \widehat{\left(v^n \psi^n\right)}_l'(\theta s) ds  \right] d\theta, \label{xi_psi_1}
\end{align}
and noticing $\left| d_l^\pm (\tau) \right| \lesssim \tau \varepsilon^2 (1 + \mu_l^2)$ for $l = -\frac{N}{2},\ldots,\frac{N}{2}-1$, we combine \eqref{xi_psi_esti} and \eqref{xi_psi_1}, use Cauchy-Schwarz inequality and get
\begin{align}
\left\| \xi^n_\psi \right\|_{H^1}^2 \lesssim & \ \tau^2 \varepsilon^4 \left( \left\|v^n \delta^n \right\|_{L^{\infty}([0,\tau];H^3)}^2 + \left\| \partial_s(v^n \psi^n) \right\|_{L^{\infty}([0,\tau];H^3)}^2  + \left\|  \overline{v^n} \delta^n \right\|_{L^{\infty}([0,\tau];H^3)}^2  \right. \nonumber\\
&\ \left.  + \left\| \partial_s( \overline{v^n} \psi^n) \right\|_{L^{\infty}([0,\tau];H^3)}^2  \right)  + \tau^2 \left\|r^n \psi^n \right\|_{L^{\infty}([0,\tau];H^1)}^2  \nonumber\\
\lesssim &\ \tau^2 \varepsilon^4, \label{lmpsi2.1}
\end{align}
where we apply prior estimates \eqref{lm1v}-\eqref{lm1r} and Sobolev embedding theorem again.

Finally, recalling the energy functional \eqref{enefun} and combining \eqref{lmphi1.5}-\eqref{lmphi1.6} and \eqref{lmpsi1.3}, \eqref{lmphi2.3}-\eqref{lmphi2.4} and \eqref{lmpsi2.1}, respectively, we complete the proof.
\end{proof}

Then, we are going to derive the error estimates for the nonlinear terms \eqref{NL} and \eqref{FNL}.
\begin{Lemma}[Nonlinear terms error]\label{lemma:Non}
Under the assumption (A) and assuming \eqref{thm1err3} holds for n (which will be proved by induction later), we have for any $0 < \tau \le \tau_1$
\begin{align}
    \mathcal{E}(\eta^n_\psi,\eta^n_\phi,\dot{\eta}^n_\phi) \lesssim \tau^2 \mathcal{E}(e^n_{\psi,N},e^n_{\phi,N},\dot{e}^n_{\phi,N}) + \frac{\tau^2 h^{2 m_0-2}}{\varepsilon^2}. \label{lmNon_est}
\end{align}
\end{Lemma}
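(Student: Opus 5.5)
We prove \eqref{lmNon_est} by bounding each of $\eta^n_\psi$, $\eta^n_\phi$, $\dot\eta^n_\phi$ separately in the norms that make up $\mathcal{E}$. Each is a product of a coefficient ($c_l^\pm,d_l^\pm,p_l,q_l,p_l',q_l'$) with a nonlinear difference ``exact product minus discrete product''. The first step is a table of coefficient bounds, uniform in $l$ and $\varepsilon$:
\begin{align*}
|c_l^\pm(\tau)|\le\tau,\quad |d_l^\pm(\tau)|\le\tau^2,\quad |p_l(\tau)|\le\tau,\quad |q_l(\tau)|\le\tau^2,\quad |p_l'(\tau)|\le \frac{\tau}{\varepsilon^2},\quad |q_l'(\tau)|\le\frac{\tau^2}{\varepsilon^2},
\end{align*}
together with the refined bounds $\mu_l|p_l(\tau)|\lesssim \tau/\varepsilon$ and $\mu_l|q_l(\tau)|\lesssim\tau^2/\varepsilon$. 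The refined bounds come from $|\sin(\omega_l(\tau-\theta))|/(\varepsilon^2\omega_l)\le \min\{1,\,1/(\varepsilon\mu_l)\}$. They handle the term $\|\partial_x\eta^n_\phi\|_{H^1}$ in $\mathcal{E}$, and the $1/\varepsilon^2$ in $p_l',q_l'$ is absorbed by the weight $\varepsilon^2$ in front of $\|\dot\eta^n_\phi\|^2_{H^1}$. With these bounds, and using Parseval, each of the weighted norms in $\mathcal{E}(\eta^n_\psi,\eta^n_\phi,\dot\eta^n_\phi)$ is controlled by $\tau^2/\varepsilon^2$ times the $H^1$ or $H^2$ norm squared of the nonlinear differences, with a single $\varepsilon^{-2}$ weight in front of the $H^1$ parts. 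It therefore suffices to show
\begin{align*}
\|\text{difference}\|_{H^1}\lesssim \|e^n_{\psi,N}\|_{H^1}+\|e^n_{\phi,N}\|_{H^1}+\varepsilon^2\|\dot e^n_{\phi,N}\|_{H^1}+h^{m_0-1},
\end{align*}
with one extra derivative allowed when the term is paired with $\mu_l$.

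Next we split each difference with the standard chain
\begin{align*}
\widehat{f}_l-\widetilde{F}_l=\widehat{(f-P_Nf)}_l+\widehat{(P_N f - I_N F)}_l,
\end{align*}
where, for example, $f=v^n\psi^n(0)$ and $F=\mathbf v^{n,0}\Psi^n$. The second piece is handled by the aliasing error estimate for $I_N$ applied to smooth products, which gives $O(h^{m_0-1})$ in $H^1$ by assumption (A). What remains is the trigonometric interpolant of a product of errors, which we estimate in $H^1$ using the algebra property of $H^1$ in 1D and the $H^1$-stability of $I_N$ on $X_N$ products. For example,
\begin{align*}
\|v^n(0)\psi^n(0)-I_N(\mathbf v^{n,0}\Psi^n)\|_{H^1}\lesssim \|v^n(0)-I_N\mathbf v^{n,0}\|_{H^1}\|\psi^n\|_{H^1}+\|I_N\mathbf v^{n,0}\|_{H^1}\|e^n_\psi\|_{H^1}+h^{m_0-1}.
\end{align*}
Here $v^n(0)-I_N\mathbf v^{n,0}=\tfrac12(e^n_\phi-i\varepsilon^2\dot e^n_\phi)$. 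The induction hypothesis \eqref{thm1err3} bounds $\|I_N\Psi^n\|_{H^1}$, $\|I_N\Phi^n\|_{H^1}$ and $\varepsilon^2\|I_N\dot\Phi^n\|_{H^1}$, and hence $\|I_N\mathbf v^{n,0}\|_{H^1}$, uniformly in $\varepsilon$. We then pass from $e^n$ to $e^n_{\cdot,N}$ using \eqref{Tri_err}, which costs another $h^{m_0-1}$.

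The main obstacle is the $\delta^n$ versus $\dot\Psi^n$ terms, since $\dot\Psi^n$ contains the factor $\sin(\mu_l^2\tau)/\tau$, which is unbounded on high modes as an operator on $H^1$. We will use $|\sin(\mu_l^2\tau)/\tau|\le \min\{\mu_l^2,1/\tau\}$ and accept a loss by spending the extra coefficient power of $\tau$. The difference $\delta^n-I_N\dot\Psi^n$ then has the Fourier form $-i\frac{\sin(\mu_l^2\tau)}{\tau}\widehat{(e^n_{\psi,N})}_l+i[\widehat{\phi\psi}-\widetilde{\Phi\Psi}]$. Its $H^1$ norm is at most $\tau^{-1}\|e^n_{\psi,N}\|_{H^1}$ plus terms treated as above. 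Since $\delta^n-\dot\Psi^n$ only ever appears multiplied by $d_l^\pm$ or $q_l,q_l'$, which carry $\tau^2$ (or $\tau^2/\varepsilon^2$), the product is still $O(\tau)\|e^n_{\psi,N}\|_{H^1}$. Products like $v^n(\delta^n-\dot\Psi^n)$ are again controlled by the $H^1$ algebra property and \eqref{thm1err3}.

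The $\partial_x\eta^n_\phi$ part of $\mathcal{E}$ requires $H^2$ control of the $|\psi|^2$ and $\mathrm{Re}(\bar\psi\delta)$ differences. We trade the extra $\mu_l$ for the $1/\varepsilon$ in the refined bounds on $p_l$ and $q_l$, so that only $H^1$ norms of the differences are needed. Collecting everything and squaring gives
\begin{align*}
\mathcal{E}(\eta^n_\psi,\eta^n_\phi,\dot\eta^n_\phi)\lesssim \frac{\tau^2}{\varepsilon^2}\Bigl(\|e^n_{\psi,N}\|^2_{H^1}+\|e^n_{\phi,N}\|^2_{H^1}+\varepsilon^4\|\dot e^n_{\phi,N}\|_{H^1}^2+h^{2m_0-2}\Bigr),
\end{align*}
and the right-hand side is at most $\tau^2\mathcal{E}(e^n_{\psi,N},e^n_{\phi,N},\dot e^n_{\phi,N})+\tau^2h^{2m_0-2}/\varepsilon^2$, which is \eqref{lmNon_est}.
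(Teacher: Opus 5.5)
Your proposal is correct and follows essentially the same route as the paper. The common steps are the coefficient bounds on $c_l^\pm,d_l^\pm,p_l,q_l,p_l',q_l'$, splitting off the $P_N$ versus $I_N$ aliasing error, the product estimates, the bound $|\sin(\mu_l^2\tau)/\tau|\le 1/\tau$ absorbed by the extra $\tau$ in $d_l^\pm,q_l,q_l'$, and the induction hypothesis \eqref{thm1err3}; the only differences are that you use the $H^1$ algebra and $H^1$-stability of $I_N$ where the paper uses the equivalent discrete norm $\|\cdot\|_{Y,1}$, and that you spell out the $\mu_l|p_l|\lesssim\tau/\varepsilon$, $\mu_l|q_l|\lesssim\tau^2/\varepsilon$ bounds the paper uses tacitly for $\|\partial_x\eta^n_\phi\|_{H^1}$. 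One small correction: for products containing $\delta^n$ (and for $\delta^n-P_N\delta^n$) the aliasing error is only $O(h^{m_0-1}/\tau)$, because \eqref{est_delta} gives only $\|\delta^n\|_{H^{m_0}}\lesssim 1/\tau$, not $O(h^{m_0-1})$ as you state; it is absorbed by the same extra factor of $\tau$, exactly as in the paper.
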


In order to prove Lemma~\ref{lemma:Non}, for any $\mathbf{v} \in Y_N$, we denote $v_{N+1}  = v_1$ and define the difference operator $\delta^+_x \mathbf{v} \in Y_N$ as
\begin{align*}
    \delta^+_x \mathbf{v}_j = \frac{v_{j+1} - v_j}{h}, \qquad j = 0,1,\ldots,N,
\end{align*}
with the corresponding discrete norm
\begin{align*}
    \left\| \mathbf{v} \right\|^2_{Y,1} := \left\| \mathbf{v} \right\|_{l^2}^2 + \left\| \delta^+_x \mathbf{v} \right\|_{l^2}^2.
\end{align*}
We have the following estimates \cite{bao2014uniform,bao2017uniformly}
\begin{align}\label{equiv_norm}
    \left\| I_N \mathbf{v} \right\|_{H^1} \lesssim \left\| \mathbf{v} \right\|_{Y,1} \lesssim \left\| I_N \mathbf{v} \right\|_{H^1}, \quad \forall \mathbf{v} \in Y_N.
\end{align}
\begin{proof} By the estimate $\tau \left| p_l(\tau)\right| + \left| q_l(\tau)\right| \lesssim \tau^2$ and applying the triangle inequality to \eqref{NL}-\eqref{FNL}, we obtain
\begin{align*}
\left\| \eta^n_\phi \right\|_{H^1} \lesssim & \tau \left\| P_N(\left|\psi^n\right|^2)(\cdot,0) - I_N (\left| \Psi^n \right|^2) \right\|_{H^1}   + \tau^2 \left\| P_N(\overline{\psi^n} \delta^n)(\cdot,0) - I_N (\overline{\Psi^n} \dot{\Psi}^n ) \right\|_{H^1}.
\end{align*}
Recalling the definition of $\delta^n$ in \eqref{delta} and the regularity assumption (A), we have
\begin{align}
    \left\| \delta^n (\cdot)\right\|_{H^{m_0}} \le \frac{1}{\tau}\left\| \psi (\cdot,t_n) \right\|_{H^{m_0}} +  \left\| \phi(\cdot,t_n) \psi(\cdot,t_n) \right\|_{H^{m_0}} \lesssim \frac{1}{\tau}. \label{est_delta}
\end{align}
Combining \eqref{est_delta} and assumption (A), we apply the triangle inequality and get
\begin{align}\label{lm5.1}
\left\| \eta^n_\phi \right\|_{H^1} \lesssim & \ \tau  \left\| I_N(\left|\psi^n\right|^2)(\cdot,0) - I_N (\left| \Psi^n \right|^2) \right\|_{H^1}  \nonumber \\
& \ + \tau^2 \left\| I_N(\overline{\psi^n} \delta^n)(\cdot,0) - I_N (\overline{\Psi^n} \dot{\Psi}^n ) \right\|_{H^1}  + \tau h^{m_0 -1}.
\end{align}
Applying the equivalence of norms \eqref{equiv_norm}, we get the estimates
\begin{align}\label{lm5.2}
& \left\| I_N(\left|\psi^n\right|^2)(\cdot,0) - I_N (\left| \Psi^n \right|^2) \right\|_{H^1} \nonumber \\
& \lesssim \left\| \left|\psi^n\right|^2(\cdot,0) - \left| \Psi^n \right|^2 \right\|_{Y,1}  \lesssim \left\| e_\psi^n \right\|_{Y,1} \lesssim \left\| e_\psi^n \right\|_{H^1},
\end{align}
and 
\begin{align}
& \left\| I_N(\overline{\psi^n} \delta^n)(\cdot,0) - I_N (\overline{\Psi^n} \dot{\Psi}^n ) \right\|_{H^1} 
 \lesssim  \left\| ( \overline{\psi^n} \delta^n ) (\cdot,0) -  \overline{\Psi^n} \dot{\Psi}^n  \right\|_{Y,1} \nonumber \\
& \lesssim   \left\| \psi^n(\cdot,0) -  \Psi^n  \right\|_{Y,1} + \left\|  \delta^n (\cdot,0) - \dot{\Psi}^n  \right\|_{Y,1} \nonumber \\
& \lesssim  \left\| e^n_\psi  \right\|_{Y,1} + \left\|  \partial_{xx}^S \psi^n (\cdot,0)- \partial_{xx}^S \Psi^n  \right\|_{Y,1} + \left\| ( \phi^n \psi^n) (\cdot,0) - \Phi^n\Psi^n \right\|_{Y,1} \nonumber \\
& \lesssim  \left\| e^n_\psi  \right\|_{H^1} + \left\| e^n_\phi  \right\|_{H^1} + \left\|  \partial_{xx}^S \psi^n (\cdot,0)- \partial_{xx}^S \Psi^n  \right\|_{H^1},\label{lm5.3}
\end{align}
where the operator $\partial_{xx}^S$ for a function $v(x)$ (or $\mathbf{v} \in Y_N$) is defined as
\begin{align*}
    \partial_{xx}^S v(x):= - \sum_{l\in\mathbb{Z}} \frac{\sin(\mu_l^2 \tau)}{\tau} \widehat{v}_l e^{i \mu_l(x-a)}, \quad  \partial_{xx}^S \mathbf{v}:= - \sum_{l = -N/2}^{N/2 - 1} \frac{\sin(\mu_l^2 \tau)}{\tau} \widetilde{\mathbf{v}}_l e^{i \mu_l(x-a)}.
\end{align*}
Similar to the estimate of \eqref{lm5.2} and by noticing $\left| \frac{\sin(\mu_l^2 \tau)}{\tau} \right| \le \frac{1}{\tau}$, we obtain from \eqref{lm5.3} that
\begin{align}
 \left\| I_N(\overline{\psi^n} \delta^n)(\cdot,0) - I_N (\overline{\Psi^n} \dot{\Psi}^n ) \right\|_{H^1}  \lesssim  \frac{1}{\tau} \left\| e^n_\psi  \right\|_{H^1}  + \left\| e^n_\phi  \right\|_{H^1}. \label{lm5.4}
\end{align}
Plugging \eqref{lm5.2} and \eqref{lm5.4} into \eqref{lm5.1}, and noticing \eqref{Tri_err}, we get
\begin{align}
    \left\| \eta^n_\phi \right\|_{H^1} \lesssim \tau \left( \left\| e_{\psi,N}^n \right\|_{H^1} + \left\| e_{\phi,N}^n \right\|_{H^1} + h^{m_0 - 1} \right). \label{lm5.5}
\end{align}
In addition, combining with $\tau \left| p_l'(\tau) \right| + \left| q_l'(\tau) \right| \lesssim \frac{\tau^2}{\varepsilon^2}$, we obtain similarly
\begin{align}
\left\| \partial_x \eta^n_\phi \right\|_{H^1}^2 + \varepsilon^2 \left\| \dot{\eta}^n_\phi 
\right\|_{H^1}^2 \lesssim \frac{\tau^2}{\varepsilon^2} \left( \left\| e^n_{\psi,N} \right\|_{H^1}^2 + \left\| e^n_{\phi,N}  \right\|_{H^1}^2 + h^{2 m_0 -2} \right). \label{lm5.6}
\end{align}

Similarly, recalling the initial data in \eqref{v1d} and \eqref{MTIIni1}, we apply the estimates \eqref{equiv_norm} and \eqref{est_delta}, and analyze terms in $\eta^n_\psi$ \eqref{FNLpsi}, i.e.,
\begin{align}
& \left\| P_N( v^n\psi^n )(\cdot,0) - I_N( \mathbf{v}^{n,0} \Psi^n) \right\|_{H^1} \nonumber \\
& \lesssim \left\| I_N( v^n\psi^n )(\cdot,0) - I_N( \mathbf{v}^{n,0} \Psi^n) \right\|_{H^1}  + h^{m_0 - 1} \nonumber \\
& \lesssim \left\| ( v^n\psi^n )(\cdot,0) -  \mathbf{v}^{n,0} \Psi^n \right\|_{Y,1}  + h^{m_0 - 1} \nonumber \\
& \lesssim  \left\| e_\phi^n \right\|_{H^1} +  \varepsilon^2 \left\| \dot{e}_\phi^n \right\|_{H^1} + \left\| e_\psi^n \right\|_{H^1} + h^{m_0 -1} , \label{lm5.7}
\end{align}
and 
\begin{align}
& \left\| P_N( v^n \delta^n )(\cdot,0) - I_N( \mathbf{v}^{n,0} \dot{\Psi}^n) \right\|_{H^1} \nonumber \\
& \lesssim \left\| I_N( v^n \delta^n )(\cdot,0) - I_N( \mathbf{v}^{n,0} \dot{\Psi}^n) \right\|_{H^1}  + \frac{h^{m_0 - 1}}{\tau}  \nonumber \\
& \lesssim \left\| ( v^n \delta^n )(\cdot,0) -  \mathbf{v}^{n,0} \dot{\Psi}^n \right\|_{Y,1}  + \frac{h^{m_0 - 1}}{\tau} \nonumber \\
& \lesssim \left\| e_\phi^n \right\|_{H^1} +  \varepsilon^2 \left\| \dot{e}_\phi^n \right\|_{H^1} + \frac{1}{\tau} \left\| e^n_\psi  \right\|_{H^1} + \frac{h^{m_0 - 1}}{\tau} . \label{lm5.8}
\end{align}
Combining \eqref{Tri_err} and \eqref{lm5.7}-\eqref{lm5.8} and noting  $\tau \left|c_l^\pm (\tau) \right| + \left| d_l^\pm(\tau) \right| \lesssim \tau^2 $, we get from \eqref{FNLpsi} that
\begin{align}
    \left\| \eta_\psi^n \right\|_{H^1} \lesssim  \tau \left(  \left\| e_{\phi,N}^n \right\|_{H^1} +  \varepsilon^2 \left\| \dot{e}_{\phi,N}^n \right\|_{H^1} + \left\| e_{\psi,N}^n \right\|_{H^1} + h^{m_0 -1} \right) . \label{lm5.9}
\end{align}
Recalling the energy functional \eqref{enefun}, we conclude the proof by \eqref{lm5.5}-\eqref{lm5.6} and \eqref{lm5.9}.
\end{proof}

\subsection{Proof of Theorem~\ref{theorem1} by the energy method}
\begin{proof}
For $ n= 0$, from the initial data \eqref{init00} and the assumption (A), we have
\begin{align*}
&\left\|e_\psi^0 \right\|_{H^1} + \left\|e_\phi^0 \right\|_{H^1} + \varepsilon^2 \left\|\dot{e}_\phi^0 \right\|_{H^1} \\
& = \left\| \psi_0-I_N \psi_0 \right\|_{H^1} + \left\| \phi_0 - I_N \phi_0 \right\|_{H^1} + \varepsilon^2 \left\|\phi_1 - I_N \phi_1 \right\|_{H^1} \lesssim h^{m_0 - 1}.
\end{align*}
In addition, using the triangle inequality, we know that there exists a constant $h_1 >0$ independent of $\varepsilon$ such that for $ 0 < h \le h_1$ and $\tau > 0$,
\begin{align*}
& \left\| I_N \Psi^0 \right\|_{H^1} \le \left\| e_\psi^0 \right\|_{H^1} + \left\| \psi_0 \right\|_{H^1} \le 1 + C_\psi,\\
& \left\| I_N \Phi^0 \right\|_{H^1} \le \left\| e_\phi^0 \right\|_{H^1} + \left\| \phi_0 \right\|_{H^1} \le 1 + C_\phi,\\
& \left\| I_N \dot{\Phi}^0 \right\|_{H^1} \le \left\| \dot{e}_\phi^0 \right\|_{H^1} + \frac{\left\| \phi_1 \right\|_{H^1}}{\varepsilon^2} \le \frac{1 + C_\phi}{\varepsilon^2}.
\end{align*}
Thus, the error bounds \eqref{thm1err1}-\eqref{thm1err3} hold for $n = 0$.

Now, we assume \eqref{thm1err1}-\eqref{thm1err3} are valid for $ n = 0,1,\ldots,m$, and prove the case for $n = m+1$. Using Cauchy's inequality, we obtain from the error equations \eqref{Erreq}
\begin{align*}
& \left| \widehat{(e^{n+1}_{\psi,N})}_l \right|^2 \le (1  +\tau) \left| \widehat{(e^{n}_{\psi,N})}_l \right|^2 + \frac{1 + \tau}{\tau} \left|\widehat{(\xi^n_\psi)}_l + \widehat{(\eta^n_\psi)}_l \right|^2,\\
& \left| \widehat{(e^{n+1}_{\phi,N})}_l \right|^2 \le (1  +\tau) \left| \cos(\omega_l\tau) \widehat{(e^{n}_{\phi,N})}_l + \frac{\sin(\omega_l \tau)}{\omega_l} \widehat{(\dot{e}^n_{\phi,N})}_l \right|^2 + \frac{1 + \tau}{\tau} \left|\widehat{(\xi^n_\phi)}_l + \widehat{(\eta^n_\phi)}_l \right|^2, \\
& \left| \widehat{(\dot{e}^{n+1}_{\phi,N})}_l \right|^2 \le (1  +\tau) \left| \omega_l \sin(\omega_l\tau) \widehat{(e^{n}_{\phi,N})}_l -\cos(\omega_l \tau) \widehat{(\dot{e}^n_{\phi,N})}_l \right|^2 + \frac{1 + \tau}{\tau} \left|\widehat{(\dot{\xi}^n_\phi)}_l + \widehat{(\dot{\eta}^n_\phi)}_l \right|^2.
\end{align*}
Multiplying the above first two equations by $(\mu_l^2 + \frac{1}{\varepsilon^2})(1 + \mu_l^2)$ and the third equation by $\varepsilon^2(1 + \mu_l^2)$, and then summing them up for $l = -\frac{N}{2},\ldots,\frac{N}{2}-1$, we get
\begin{align}
& \mathcal{E}(e^{n+1}_{\psi,N},e^{n+1}_{\phi,N},\dot{e}^{n+1}_{\phi,N}) \nonumber \\
& \le (1 + \tau) \mathcal{E}(e^{n}_{\psi,N},e^{n}_{\phi,N},\dot{e}^{n}_{\phi,N}) + \frac{1 + \tau}{\tau} \mathcal{E}(\xi^n_\psi + \eta^n_\psi,\xi^n_\phi + \eta^n_\phi,\dot{\xi}^n_\phi + \dot{\eta}^n_\phi) \nonumber \\
& \le  (1 + \tau) \mathcal{E}(e^{n}_{\psi,N},e^{n}_{\phi,N},\dot{e}^{n}_{\phi,N}) + \frac{1 + \tau}{\tau} \left[ \mathcal{E}(\xi^n_\psi,\xi^n_\phi,\dot{\xi}^n_\phi) + \mathcal{E}(\eta^n_\psi,\eta^n_\phi,\dot{\eta}^n_\phi)\right]. \label{thm.1}
\end{align}
Inserting \eqref{est_lol} in Lemma~\ref{lemma:local} and \eqref{lmNon_est} in Lemma~\ref{lemma:Non} into \eqref{thm.1}, respectively, we obtain two independent estimates
\begin{align*}
&\mathcal{E}(e^{n+1}_{\psi,N},e^{n+1}_{\phi,N},\dot{e}^{n+1}_{\phi,N}) - \mathcal{E}(e^{n}_{\psi,N},e^{n}_{\phi,N},\dot{e}^{n}_{\phi,N}) \lesssim \tau \mathcal{E}(e^{n}_{\psi,N},e^{n}_{\phi,N},\dot{e}^{n}_{\phi,N}) + \frac{\tau^5}{\varepsilon^6} + \frac{\tau h^{2m_0 -2}}{\varepsilon^2}, \\
& \mathcal{E}(e^{n+1}_{\psi,N},e^{n+1}_{\phi,N},\dot{e}^{n+1}_{\phi,N}) - \mathcal{E}(e^{n}_{\psi,N},e^{n}_{\phi,N},\dot{e}^{n}_{\phi,N}) \lesssim \tau \mathcal{E}(e^{n}_{\psi,N},e^{n}_{\phi,N},\dot{e}^{n}_{\phi,N}) + \tau \varepsilon^2 + \frac{ \tau h^{2m_0 -2}}{\varepsilon^2}.
\end{align*}
Summing the above equations for $n = 0,1,\ldots,m$ and applying the discrete Gronwall's inequality \cite{tao2006local}, we get
\begin{align*}
    \mathcal{E}(e^{m+1}_{\psi,N},e^{m+1}_{\phi,N},\dot{e}^{m+1}_{\phi,N}) \lesssim \frac{\tau^4}{\varepsilon^6} + \frac{h^{2m_0 - 2}}{\varepsilon^2} ,\qquad \mathcal{E}(e^{m+1}_{\psi,N},e^{m+1}_{\phi,N},\dot{e}^{m+1}_{\phi,N})  \lesssim  \varepsilon^2 + \frac{ h^{2m_0 -2}}{\varepsilon^2},
\end{align*}
which implies by the energy functional \eqref{enefun}
\begin{subequations}
\begin{align}
& \left\| e^{m+1}_{\psi,N} \right\|_{H^1} + \left\| e^{m+1}_{\phi,N} \right\|_{H^1} + \varepsilon^2 \left\| \dot{e}^{m+1}_{\phi,N} \right\|_{H^1} \lesssim h^{m_0 -1} + \frac{\tau^2}{\varepsilon^2},\\
 & \left\| e^{m+1}_{\psi,N} \right\|_{H^1} + \left\| e^{m+1}_{\phi,N} \right\|_{H^1} + \varepsilon^2 \left\| \dot{e}^{m+1}_{\phi,N} \right\|_{H^1} \lesssim h^{m_0 -1} +  \varepsilon^2.   
\end{align}\label{thm.2}
\end{subequations}
Combining \eqref{thm.2} with \eqref{Tri_err}, we prove \eqref{thm1err1}-\eqref{thm1err2} for $ n = m+1$. Thus, by taking the minimum of \eqref{thm1err1} and \eqref{thm1err2}, we obtain a uniform error bound with respect to $\varepsilon \in (0,1]$,
\begin{align*}
    \left\| e^{m+1}_{\psi} \right\|_{H^1} + \left\| e^{m+1}_{\phi} \right\|_{H^1} + \varepsilon^2 \left\| \dot{e}^{m+1}_{\phi} \right\|_{H^1} \lesssim h^{m_0 -1} + \tau.
\end{align*}

Finally, by the triangle inequality, there exist constants $\tau_2 >0$ and $h_2>0$ independent of $\varepsilon$ such that for $0<\tau \le \tau_2$ and $0<h\le h_2$
\begin{align*}
& \left\| I_N \Psi^{m+1} \right\|_{H^1} \le \left\| e_\psi^{m+1} \right\|_{H^1} + \left\| \psi(\cdot,t_{m+1}) \right\|_{H^1} \le 1 + C_\psi,\\
& \left\| I_N \Phi^{m+1} \right\|_{H^1} \le \left\| e_\phi^{m+1} \right\|_{H^1} + \left\| \phi(\cdot,t_{m+1} )\right\|_{H^1} \le 1 + C_\phi,\\
& \left\| I_N \dot{\Phi}^{m+1} \right\|_{H^1} \le \left\| \dot{e}_\phi^{m+1} \right\|_{H^1} + \left\| \partial_t \phi(\cdot,t_{m+1}) \right\|_{H^1} \le \frac{1 + C_\phi}{\varepsilon^2}.
\end{align*}
Therefore, \eqref{thm1err3} is also valid for $n = m+1$. The proof is completed by choosing $\tau_0 = \mathrm{min}\{ \tau_1,\tau_2\}$ and $h_0 = \mathrm{min}\{ h_1,h_2\}$.
\end{proof}

\begin{Remark}
Compared to the MTI-FP method with well-prepared initial data in the reference \cite{bao2017uniformly}, we choose homogeneous transmission conditions $\gamma(x) \equiv 0$ and apply the Gautschi's quadrature instead of the trapezoidal rule for discretization of integrals involving $r^n$. This brings two advantages for the MTI-FP method in this work: (i) it relaxes the regularity requirement $\phi \in C^1([0,T];H^{m_0 + 2})$ in the reference \cite{bao2017uniformly} to $\phi \in C^1([0,T];H^{m_0})$, and (ii) it greatly simplifies the scheme and the corresponding error analysis. For practical computation, it improves computational efficiency and reduces CPU time.

\end{Remark}

\begin{Remark}
 When $d = 1$, Theorem~\ref{theorem1} holds without any CFL-conditions. However, for $d = 2$ or $d = 3$, due to the use of inverse inequalities to control the $l^{\infty}$ norm of the numerical solutions \cite{bao2013optimal,shen2011spectral} , we have to impose the technical condition
  \begin{align*}
      \tau \lesssim \rho_d(h), \quad \mathrm{with} \quad\rho_d(h) = 
      \begin{cases}
          1 / \left| \mathrm{ln} h\right|, \quad & d = 2,\\
          \sqrt{h}, \quad & d = 3.
      \end{cases}
  \end{align*}
Furthermore, if under a stronger assumption (B) of the regularity, i.e., for $m_0 \ge 6$
\begin{align*}
 \mathrm{(B)} \qquad  \left\| \psi \right\|_{L^{\infty} \left( [0,T]; H^{m_0} \right)} +  \left\| \phi \right\|_{L^{\infty} \left( [0,T]; H^{m_0} \right)} + \varepsilon^2 \left\| \partial_t \phi \right\|_{L^{\infty} \left( [0,T]; H^{m_0} \right)} \lesssim 1, \nonumber 
\end{align*}
we can derive error estimates in $H^2$-norm and all of the above analysis could be extended easily. In this case, by using the following Sobolev inequalities \cite{evans2022partial}, CFL conditions are not needed for $d =1, 2, 3$,
\begin{align*}
    \left\| u \right\|_{L^{\infty}(\Omega)} \le C \left\| u \right\|_{H^2 (\Omega)},\quad 
    \left\| u \right\|_{W^{1,p}(\Omega)} \le C \left\| u \right\|_{H^2 (\Omega)}, \quad 1 < p < 6,
\end{align*}
where $\Omega$ is a bounded domain in $d$-dimensional space $(d = 1 , 2, 3)$.

\end{Remark}




\section{A multiscale interpolation in time and its uniformly accurate error bounds}\label{sec:interpolation}
In this section, we present a multiscale interpolation in time based on (i) the multiscale decomposition 
\eqref{ansatz1d} and (ii) a linear interpolation of the numerical results obtained via the MTI-FP \eqref{MTIFP}-\eqref{init00}.

\subsection{The multiscale interpolation in time}

Let $\mathcal{I}_\tau: C([0,\tau]) ({\rm or}\ {\mathbb C}^2) \rightarrow W:={\rm span}\{1, s\}$ be the linear 
interpolation operator, i.e.,
\begin{align}
&(\mathcal{I}_\tau w)(s) =\frac{\tau - s}{\tau} w(0) +\frac{s}{\tau}w(\tau), \qquad  0\le s\le \tau,  \qquad 
w\in  C([0,\tau]), \\
&(\mathcal{I}_\tau \mathbf{w})(s) =\frac{\tau - s}{\tau} w_0 +\frac{s}{\tau}w_1, \qquad  0\le s\le \tau, \qquad 
\mathbf{w}=(w_0,w_1)^T\in {\mathbb C}^2.
\end{align}
Then, we have the following estimates of this linear interpolation operator.
\begin{Proposition} For $\sigma >0$ and $\omega \in C^0([0,\tau];H^\sigma(\Omega)) $, we have
\begin{align*}
& \left\| w (\cdot,s) - (\mathcal{I}_\tau  w )(\cdot,s)\right\|_{H^\sigma} \le C_1 \tau \left\| \partial_s w \right\|_{L^\infty([0,\tau];H^\sigma)}, \\
& \left\| w (\cdot,s) -  ( \mathcal{I}_\tau w) (\cdot,s)\right\|_{H^\sigma} \le C_2 \tau^2 \left\| \partial_{ss} w \right\|_{L^\infty([0,\tau];H^\sigma)}, \qquad 0 \le s \le \tau,
\end{align*}
where $C_1$ and $C_2$ are constants independent of $\sigma$, $\tau$ and $\omega$.
\label{pro_int}
\end{Proposition}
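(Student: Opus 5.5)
The plan is to write the interpolation error at each fixed $s\in[0,\tau]$ as a combination of increments of $w$ and then estimate these with the Bochner/Fundamental Theorem of Calculus in $H^\sigma$. (We tacitly need $w\in C^1$ respectively $C^2$ in time with values in $H^\sigma$ for the right-hand sides to be finite; otherwise there is nothing to prove.) Since $(\mathcal{I}_\tau w)(s)=\frac{\tau-s}{\tau}w(0)+\frac{s}{\tau}w(\tau)$ and $\frac{\tau-s}{\tau}+\frac{s}{\tau}=1$, we have the identity
\begin{align*}
w(\cdot,s)-(\mathcal{I}_\tau w)(\cdot,s)=\frac{\tau-s}{\tau}\bigl(w(\cdot,s)-w(\cdot,0)\bigr)-\frac{s}{\tau}\bigl(w(\cdot,\tau)-w(\cdot,s)\bigr).
\end{align*}

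For the first estimate, write $w(\cdot,s)-w(\cdot,0)=\int_0^s\partial_s w(\cdot,\theta)\,d\theta$ and $w(\cdot,\tau)-w(\cdot,s)=\int_s^\tau\partial_s w(\cdot,\theta)\,d\theta$. Take $H^\sigma$ norms using Minkowski's integral inequality. This gives the bound $\bigl(\frac{(\tau-s)s}{\tau}+\frac{s(\tau-s)}{\tau}\bigr)\|\partial_s w\|_{L^\infty([0,\tau];H^\sigma)}\le \frac{\tau}{2}\|\partial_s w\|_{L^\infty([0,\tau];H^\sigma)}$. So $C_1=1/2$ works, and it clearly does not depend on $\sigma$, $\tau$ or $w$.

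For the second estimate, use the Peano kernel representation. The operator $\mathcal{I}_\tau$ reproduces linear functions in $s$, so the error $E(s):=w(s)-(\mathcal{I}_\tau w)(s)$ satisfies $E(0)=E(\tau)=0$ and $E''=\partial_{ss}w$ (as $H^\sigma$-valued functions). Hence
\begin{align*}
E(s)=-\int_0^\tau G(s,\theta)\,\partial_{ss}w(\cdot,\theta)\,d\theta,\qquad G(s,\theta)=\begin{cases}\theta(\tau-s)/\tau,&\theta\le s,\\ s(\tau-\theta)/\tau,&\theta> s,\end{cases}
\end{align*}
where $G\ge0$ is the Green's function of $-d^2/ds^2$ on $[0,\tau]$ with Dirichlet conditions. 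Equivalently, one can apply Taylor's formula with integral remainder about $s$ to $w(0)$ and $w(\tau)$; the first-order terms cancel in the convex combination. Minkowski's inequality then gives $\|E(s)\|_{H^\sigma}\le \int_0^\tau G(s,\theta)\,d\theta\,\|\partial_{ss}w\|_{L^\infty H^\sigma}=\frac{s(\tau-s)}{2}\|\partial_{ss}w\|_{L^\infty H^\sigma}\le\frac{\tau^2}{8}\|\partial_{ss}w\|_{L^\infty H^\sigma}$, so $C_2=1/8$.

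The only delicate point, and hence the main obstacle, is justifying the vector-valued calculus. We need the fundamental theorem of calculus and the Minkowski inequality for Bochner integrals in the Hilbert space $H^\sigma$. Alternatively, we can avoid this by working Fourier mode by mode. The identity above holds for each coefficient $\widehat{w}_l(s)$ with the same nonnegative kernel. Multiplying by $(1+\mu_l^2)^{\sigma}$, summing over $l$, and using Minkowski's inequality in $\ell^2$ yields the same constants. This also makes the independence of the constants from $\sigma$ transparent.
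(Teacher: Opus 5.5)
The paper gives no proof of this proposition. It states it as a standard fact about linear interpolation and then uses it with $\sigma=1$ in the proof of Theorem~\ref{theorem2}, so there is no argument of the authors' to compare with.

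Your proof is correct and complete.
\begin{itemize}
\item For the first estimate, the splitting $E(s)=\frac{\tau-s}{\tau}\bigl(w(s)-w(0)\bigr)-\frac{s}{\tau}\bigl(w(\tau)-w(s)\bigr)$ gives the pointwise bound $\frac{2s(\tau-s)}{\tau}\|\partial_s w\|_{L^\infty([0,\tau];H^\sigma)}$. This is at most $\frac{\tau}{2}\|\partial_s w\|_{L^\infty([0,\tau];H^\sigma)}$, so $C_1=\frac12$.
\item For the second estimate, the Dirichlet Green's function representation of $E$ is correct. So is its equivalent form via Taylor expansion about $s$, where the first-order terms cancel. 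Together with $\int_0^\tau G(s,\theta)\,d\theta=\frac{s(\tau-s)}{2}\le\frac{\tau^2}{8}$, this gives $C_2=\frac18$.
\end{itemize}
Only the fundamental theorem of calculus and Minkowski's inequality for Bochner integrals are used. The argument therefore works verbatim in any Banach space, which is the real reason the constants are absolute and independent of $\sigma$. The Fourier mode-by-mode variant is a valid alternative but not needed.

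One refinement on the hypotheses. You rightly note that $w\in C^0([0,\tau];H^\sigma)$, as written in the statement, does not give the right-hand sides a meaning; the statement also writes $\omega$ where $w$ is meant. You need less than $C^1$, resp.\ $C^2$, in time, though. It suffices that $\partial_s w$, resp.\ $\partial_{ss} w$, lies in $L^\infty([0,\tau];H^\sigma)$. Then $w$, resp.\ $\partial_s w$, is absolutely continuous with values in the Hilbert space $H^\sigma$, and all the integral identities you use remain valid.
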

Based on the numerical solutions from the MTI-FP \eqref{MTIFP}-\eqref{init00}, denote
\[\mathbf{v}_j^n=(v^{n,0}_j,v^{n,1}_j)^T\in{\mathbb C}^2, \quad  \mathbf{r}_j^n=(0,r^{n,1}_j)^T\in{\mathbb C}^2,
\quad j=0,1,\ldots,N, \quad n\ge0,\]
then we can present a multiscale interpolation in time for $t\ge0$ as
\begin{equation}
\mathbf{\Psi}(t) = (\Psi_0(t),\ldots,\Psi_N(t))^T, \quad \mathbf{\Phi}(t) = (\Phi_0(t),\ldots,\Phi_N(t))^T, \quad t \ge 0, \label{MTIint1}
\end{equation}
where for $j=0,1,\ldots,N$
\begin{subequations}\label{MTIint2}
\begin{align}
& \Psi_j(t_n + s) = \frac{\tau - s }{\tau} \psi^n_j+ \frac{s}{\tau} \psi_j^{n+1}, \qquad 0 \le s \le \tau, \qquad n \ge 0,\label{intpsi}\\
&\Phi_j(t_n+s) =e^{is/\varepsilon^2} (\mathcal{I}_\tau\mathbf{v}_j^n)(s)+ {\rm c.c.} + (\mathcal{I}_\tau\mathbf{r}_j^n)(s), \label{intphi}
\end{align}
\end{subequations}
with
\[ (\mathcal{I}_\tau\mathbf{v}_j^n)(s)=\frac{\tau-s}{\tau}v_j^{n,0}+\frac{s}{\tau}v_j^{n,1},\qquad 
(\mathcal{I}_\tau\mathbf{r}_j^n)(s)=\frac{s}{\tau} r_j^{n,1}, \qquad 0\le s\le \tau.
\]

\subsection{A uniformly accurate error bound}

For the multiscale interpolation \eqref{MTIint2}, we have the following error bounds.

\begin{Theorem}\label{theorem2} Under the assumption (A) and $\tau_0$, $h_0$ independent of $\varepsilon$ obtained in Theorem~\ref{theorem1}, for any $0 < \varepsilon \le 1$, when $0 < h \le h_0$ and $0 < \tau \le \tau_0$, we have for $0 \le t \le T$
\begin{equation} \label{int_err1}
\begin{aligned}
& \left\| \psi(\cdot,t) - (I_N\mathbf{\Psi})(\cdot,t) \right\|_{H^1}  \lesssim h^{m_0 -1} + \displaystyle \frac{\tau^2}{\varepsilon^2}, \\
& \left\|  \psi(\cdot,t) - (I_N\mathbf{\Psi})(\cdot,t)  \right\|_{H^1}  \lesssim h^{m_0 -1} + \tau + \varepsilon^2, 
\end{aligned}
\end{equation}
\begin{equation}
\begin{aligned}
& \left\|  \phi(\cdot,t) - (I_N\mathbf{\Phi})(\cdot,t) \right\|_{H^1}  \lesssim h^{m_0 -1} + \displaystyle \frac{\tau^2}{\varepsilon^2}, \\
& \left\|  \phi(\cdot,t) - (I_N\mathbf{\Phi})(\cdot,t)  \right\|_{H^1}  \lesssim h^{m_0 -1} + \tau +  \varepsilon^2. 
\end{aligned}\label{int_err2}
\end{equation}
Thus, by first taking the minimum of the two error estimates in \eqref{int_err1}-\eqref{int_err2} and then taking the maximum for $\varepsilon \in (0,1]$, we obtain a uniform error estimate with respect to  $\varepsilon \in (0,1]$ as
\begin{align}
& \left\|  \psi(\cdot,t) - (I_N\mathbf{\Psi})(\cdot,t) 
\right\|_{H^1}  \lesssim h^{m_0 -1} + \tau +\max_{0<\varepsilon\le 1}\min\left  \{\frac{\tau^2}{\varepsilon^2}, \varepsilon^2\right\}\lesssim h^{m_0 -1} + \tau, \label{int_uerr1} \\
&  \left\|  \phi(\cdot,t) - (I_N\mathbf{\Phi})(\cdot,t) 
\right\|_{H^1}  \lesssim h^{m_0 -1}  + \tau +\max_{0<\varepsilon\le 1}\min\left  \{\frac{\tau^2}{\varepsilon^2}, \varepsilon^2\right\}\lesssim h^{m_0 -1} + \tau. \label{int_uerr2}
\end{align}
\end{Theorem}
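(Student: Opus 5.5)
Fix $t\in[0,T]$ and write $t=t_n+s$ with $0\le s\le\tau$. The plan is to split the error at time $t$ into an \emph{interpolation error} of the exact (micro) solution and a \emph{nodal error} already controlled by Theorem~\ref{theorem1}. For $\psi$ we write
\[
\psi(\cdot,t)-(I_N\mathbf{\Psi})(\cdot,t)=\Bigl[\psi^n(\cdot,s)-(\mathcal{I}_\tau\psi^n)(\cdot,s)\Bigr]+\frac{\tau-s}{\tau}e^n_\psi+\frac{s}{\tau}e^{n+1}_\psi,
\]
where $(\mathcal{I}_\tau\psi^n)(s)$ interpolates $\psi(t_n)$ and $\psi(t_{n+1})$. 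For $\phi$ we use the decomposition \eqref{ansatz1d}:
\[
\phi(\cdot,t)=e^{is/\varepsilon^2}v^n(\cdot,s)+\mathrm{c.c.}+r^n(\cdot,s),
\]
and the interpolant \eqref{intphi}, which has the same structure with $v^n(0),v^n(\tau),r^n(\tau)$ replaced by $I_N\mathbf{v}^{n,0},I_N\mathbf{v}^{n,1},I_N\mathbf{r}^{n,1}$. This gives
\begin{align*}
\phi(\cdot,t)-(I_N\mathbf{\Phi})(\cdot,t)={}&e^{is/\varepsilon^2}\bigl[v^n-\mathcal{I}_\tau v^n\bigr](s)+\mathrm{c.c.}+\bigl[r^n-\mathcal{I}_\tau r^n\bigr](s)\\
&+e^{is/\varepsilon^2}\mathcal{I}_\tau\bigl(v^n(0)-I_N\mathbf{v}^{n,0},\,v^n(\tau)-I_N\mathbf{v}^{n,1}\bigr)(s)+\mathrm{c.c.}\\
&+\frac{s}{\tau}\bigl(r^n(\tau)-I_N\mathbf{r}^{n,1}\bigr).
\end{align*}
Since $|e^{\pm is/\varepsilon^2}|=1$ and the interpolation weights lie in $[0,1]$, each piece can be bounded separately in $H^1$.

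\textbf{Interpolation pieces.} We bound these with Proposition~\ref{pro_int}.

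For $\psi$: $\partial_t\psi=i\partial_{xx}\psi+i\phi\psi$ is bounded in $H^1$ by (A), so the first-order bound gives $O(\tau)$. For the $\tau^2/\varepsilon^2$ version we would like $\partial_{tt}\psi=O(\varepsilon^{-2})$ in $H^1$. This holds because $\partial_t\phi=O(\varepsilon^{-2})$ by (A), and differentiating the Schr\"odinger equation gives $\partial_{tt}\psi=O(\varepsilon^{-2})$. The second-order bound then yields $O(\tau^2/\varepsilon^2)$.

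For $v^n$: Lemma~\ref{lemma:prior} gives $\|\partial_s v^n\|_{H^{m_0-2}}\lesssim1$ and $\|\partial_{ss}v^n\|_{H^{m_0-2}}\lesssim\varepsilon^{-2}$. The interpolation errors are therefore $O(\tau)$ and $O(\tau^2/\varepsilon^2)$.

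For $r^n$: \eqref{lm1r} gives $\|r^n\|\lesssim\varepsilon^2$, $\|\partial_s r^n\|\lesssim1$ and $\|\partial_{ss}r^n\|\lesssim\varepsilon^{-2}$. The interpolation error is thus bounded by $\min\{\varepsilon^2,\tau,\tau^2/\varepsilon^2\}$, where the $\varepsilon^2$ bound comes directly from the triangle inequality, since $r^n$ and its interpolant are both $O(\varepsilon^2)$.

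So the interpolation pieces are $\lesssim\tau^2/\varepsilon^2$ and $\lesssim\tau$. This explains why the second bound in the statement carries $\tau+\varepsilon^2$ rather than just $\varepsilon^2$.

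\textbf{Nodal pieces.} For $\psi$ these are exactly $e^n_\psi$ and $e^{n+1}_\psi$, bounded by \eqref{thm1err1}--\eqref{thm1err2}.

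For $v$: from \eqref{MTIIni1}, $v^n(0)-I_N\mathbf{v}^{n,0}=\tfrac12(e^n_\phi-i\varepsilon^2\dot e^n_\phi)$. Also $v^n(\tau)-I_N\mathbf{v}^{n,1}=a(\tau)\bigl(v^n(0)-P_Nv^n(0)\bigr)+a(\tau)P_N(\cdots)$, using $|a_l(\tau)|\le2$ and the relation between $P_N$ and $I_N$ on the initial data. Hence both are $\lesssim h^{m_0-1}+\|e^n_\phi\|_{H^1}+\varepsilon^2\|\dot e^n_\phi\|_{H^1}$.

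For $r$: $r^n(\tau)-I_N\mathbf{r}^{n,1}$ is the $r$-part of $\xi^n_\phi+\eta^n_\phi$. Indeed, by the reformulation in Lemma~\ref{lemma_reformulation}, $\xi^n_\phi$ and $\eta^n_\phi$ are precisely the discrepancies between $r^n(\tau)$ and the Gautschi quadrature evaluated, respectively, at exact and numerical data. Lemmas~\ref{lemma:local} and \ref{lemma:Non} then bound it by $\min\{\tau^3,\tau\varepsilon^2\}$ plus $\tau$ times the nodal error plus $h^{m_0-1}$. This is harmless.

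Combining the interpolation pieces, the nodal pieces and Theorem~\ref{theorem1} gives \eqref{int_err1}--\eqref{int_err2}. Then \eqref{int_uerr1}--\eqref{int_uerr2} follow by taking the minimum and using $\min\{\tau^2/\varepsilon^2,\varepsilon^2\}\le\tau$.

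\textbf{Main obstacle.} I expect the hardest step to be the $H^1$ estimate of $\partial_{ss}v^n$ and $\partial_{tt}\psi$ at the sharp order $\varepsilon^{-2}$ without losing extra regularity; I would lean on Lemma~\ref{lemma:prior} with $m_0\ge5$. A second delicate point is checking that the nodal errors of $v^{n,1}$ and $r^{n,1}$ are controlled by the $e^n$ quantities, rather than only by the recombined $\phi^{n+1}$ error.
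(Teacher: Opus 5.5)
Your proposal is correct and follows essentially the same route as the paper: you split the error into the linear-interpolation errors of the exact $\psi$, $v^n$, $r^n$ (bounded via Proposition~\ref{pro_int} and Lemma~\ref{lemma:prior}, using $\partial_{tt}\psi,\partial_{ss}v^n=O(\varepsilon^{-2})$) plus nodal errors at $t_n,t_{n+1}$, where $v^{n,0},v^{n,1}$ are controlled through $\tfrac12(e^n_\phi-i\varepsilon^2\dot e^n_\phi)$ and $|a_l(\tau)|\le 2$, and the $r^{n,1}$ error is identified with $\xi^n_\phi+\eta^n_\phi$ and bounded by Lemmas~\ref{lemma:local}--\ref{lemma:Non}. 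The only cosmetic differences are that the paper first passes to $P_N$ of the exact micro-variables rather than absorbing the $h^{m_0-1}$ projection error at the end, and that your extra $O(\varepsilon^2)$ bound on the $r^n$-interpolation error is not needed.
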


\begin{proof}
For $n \ge 0$ and $0 \le s \le \tau$, we define
\begin{align}
    & \Psi_N^\tau(x,t_n+s) = \mathcal{I}_\tau (P_N \psi^n) (x,s), \label{psi_exactint}\\
    & \Phi^\tau_N(x,t_n + s) = e^{is/\varepsilon^2} (\mathcal{I}_\tau (P_N v^n))(x,s) + {\rm c.c} + (\mathcal{I}_\tau (P_N r^n))(x,s), \label{phi_exactint}
\end{align}
with
\begin{align*}
\psi^n := \psi(x,t_n+\theta), \qquad v^n := v^n(x,\theta), \qquad r^n := r^n(x,\theta), \qquad 0 \le \theta \le \tau.
\end{align*}

We first prove the estimates for $I_N \mathbf{\Psi}$. Using the triangle inequality and combing interpolations \eqref{intpsi} and \eqref{psi_exactint}, we get
\begin{align*}
& \left\|  \psi(\cdot,t_n+s) -(I_N\mathbf{\Psi})(\cdot,t_n + s)\right\|_{H^1} \nonumber \\
&\le \left\| \psi(\cdot,t_n + s) -(P_N \psi)(\cdot,t_n + s)\right\|_{H^1}+ \left\| (P_N \psi)(\cdot,t_n + s) - \Psi^\tau_N (\cdot,t_n + s) \right\|_{H^1} \nonumber \\
&\quad + \left\| \Psi^\tau_N (\cdot,t_n + s) - (I_N\mathbf{\Psi})(\cdot,t_n + s) 
\right\|_{H^1} \nonumber \\
&\le \left\| \psi(\cdot,t_n + s) -(P_N \psi)(\cdot,t_n + s)\right\|_{H^1} + \left\| (I - \mathcal{I}_\tau) (P_N \psi)(\cdot,t_n + s) \right\|_{H^1} \nonumber \\
& \quad + \left\| ( P_N \psi)(\cdot,t_n) - (I_N \Psi^n)(\cdot)  \right\|_{H^1} + \left\| (P_N \psi) (\cdot,t_{n+1}) - (I_N \Psi^{n+1})(\cdot)  \right\|_{H^1}.
\end{align*}
Under the assumption (A) of $\psi$, i.e.
\begin{align*}
    \left\| \psi \right\|_{L^\infty([0,T];H^{m_0})} +  \left\| \partial_t \psi \right\|_{L^\infty([0,T];H^{m_0 -2})} + \varepsilon^2  \left\| \partial_{tt} \psi \right\|_{L^\infty([0,T];H^{m_0-4})} \lesssim 1,
\end{align*}
combining Proposition~\ref{pro_int} and error bounds \eqref{thm1err1}-\eqref{thm1err2} in Theorem~\ref{theorem1}, we obtain
\begin{align*}
&\left\|  \psi(\cdot,t_n+s) -(I_N\mathbf{\Psi})(\cdot,t_n + s)\right\|_{H^1} \\
& \lesssim h^{m_0 - 1} + \tau^2 \left\| \partial_{tt} \psi \right\|_{L^\infty([0,T];H^{1})} + \left\| e_\psi^n \right\|_{H^1} + \left\| e^{n+1}_\psi \right\|_{H^1} \lesssim h^{m_0 -1} + \frac{\tau^2}{\varepsilon^2}, 
\end{align*}
and
\begin{align*}
& \left\|  \psi(\cdot,t_n+s) -(I_N\mathbf{\Psi})(\cdot,t_n + s)\right\|_{H^1} \\
& \lesssim h^{m_0 - 1} + \tau \left\| \partial_{t} \psi \right\|_{L^\infty([0,T];H^{1})} + \left\| e_\psi^n \right\|_{H^1} + \left\| e^{n+1}_\psi \right\|_{H^1} \lesssim h^{m_0 -1} + \tau +\varepsilon^2,
\end{align*}
which proves \eqref{int_err1} and directly implies \eqref{int_uerr1}. 

Then we estimate the error bounds of $I_N \mathbf{\Phi}$. Similarly, by the triangle inequality and the assumption (A), noting \eqref{ansatz1d}, \eqref{phi_exactint} and \eqref{intphi}, we have for $0 \le s \le \tau$
\begin{align}
& \left\|  \phi(\cdot,t_n+s) -(I_N\mathbf{\Phi})(\cdot,t_n + s)\right\|_{H^1} \nonumber \\
&\le \left\| \phi(\cdot,t_n + s) -(P_N \phi)(\cdot,t_n + s)\right\|_{H^1}+ \left\| (P_N \phi)(\cdot,t_n + s) - \Phi^\tau_N (\cdot,t_n + s) \right\|_{H^1} \nonumber \\
&\quad + \left\| \Phi^\tau_N (\cdot,t_n + s) - (I_N\mathbf{\Phi})(\cdot,t_n + s) 
\right\|_{H^1} \nonumber \\
&\lesssim h^{m_0 -1} + \left\| (I - \mathcal{I}_\tau) (P_N v^n)(\cdot, s) \right\|_{H^1} + \left\| (I - \mathcal{I}_\tau) (P_N r^n)(\cdot, s) \right\|_{H^1} \nonumber \\
& \quad + \left\| ( P_N v^n)(\cdot,0) - (I_N \mathbf{v}^{n,0})(\cdot)  \right\|_{H^1} + \left\| ( P_N v^n)(\cdot,\tau) - (I_N \mathbf{v}^{n,1})(\cdot)  \right\|_{H^1} \nonumber \\
& \quad + \left\| (P_N r^n) (\cdot,\tau) - (I_N \mathbf{r}^{n,1})(\cdot)  \right\|_{H^1}. \label{thm2P1}
\end{align}
Noticing Proposition~\ref{pro_int} and the prior estimate \eqref{lm1v} of $v^n$, we obtain for $0\le s\le \tau$
\begin{equation}\label{thm2P2}
\begin{aligned}
&\left\| v^n(\cdot, s) - (\mathcal{I}_\tau v^n)(\cdot,s)\right\|_{H^1} \lesssim \tau \left\| \partial_s v^n \right\|_{L^{\infty}([0,\tau];H^1)} \lesssim \tau,  \\
& \left\| v^n(\cdot, s) - (\mathcal{I}_\tau v^n)(\cdot,s) \right\|_{H^1} \lesssim \tau^2 \left\| \partial_{ss} v^n \right\|_{L^{\infty}([0,\tau];H^1)} \lesssim \displaystyle \frac{\tau^2}{\varepsilon^2}.
\end{aligned}
\end{equation}
Similarly, for $ 0 \le s \le \tau$, we have 
\begin{align}
\left\| r^n(\cdot, s) - (\mathcal{I}_\tau r^n)(\cdot,s) \right\|_{H^1} \lesssim \tau, \quad  \left\| r^n(\cdot, s) -(\mathcal{I}_\tau r^n)(\cdot,s) \right\|_{H^1} \lesssim \displaystyle \frac{\tau^2}{\varepsilon^2}. \label{thm2P3}
\end{align}
From the initial data in \eqref{v1d}, \eqref{MTIIni1} and the error estimates \eqref{thm1err1}-\eqref{thm1err2}, we immediately obtain
\begin{equation}\label{thm2P4}
\begin{aligned}
&\left\|(P_Nv^n)(\cdot,0)-(I_N\mathbf{v}^{n,0})(\cdot)\right\|_{H^1}\lesssim h^{m_0 -1}+ \frac{\tau^2}{\varepsilon^2},\\
&\left\|(P_Nv^n)(\cdot,0)-(I_N\mathbf{v}^{n,0})(\cdot)\right\|_{H^1}\lesssim h^{m_0 -1}+ \varepsilon^2.
\end{aligned}
\end{equation}
Subtracting the Fourier coefficients $\widehat{(v^n)}_l(\tau)$ in \eqref{Sv} from $\widetilde{(\mathbf{v}^{n,1})}_l$ in \eqref{MTIFP.Cv}, noting \eqref{thm2P4} and using  $|a_l(\tau)| \lesssim1$, we get
\begin{equation}
\begin{aligned}
\left\| (P_N v^n)(\cdot,\tau) - (I_N \mathbf{v}^{n,1}) (\cdot) \right\|_{H^1}&\lesssim  \left\| (P_N v^n )(\cdot,0) - ( I_N\mathbf{v}^{n,0}) (\cdot) \right\|_{H^1} \\
& \lesssim  h^{m_0 -1} + \frac{\tau^2}{\varepsilon^2}, \\
\left\| (P_N v^n)(\cdot,\tau) - (I_N \mathbf{v}^{n,1}) (\cdot) \right\|_{H^1} & \lesssim h^{m_0 -1} + \varepsilon^2. 
\end{aligned}
\label{thm2P5}
\end{equation}
Similarly, noting the formulation of $\xi^n_\phi$ in \eqref{Localphi} and $\eta^n_\phi$ in \eqref{FNLphi}, we subtract the Fourier coefficients $\widehat{(r^n)}_l(\tau)$ in \eqref{Sr} from $\widetilde{(\mathbf{r}^{n,1})}_l$ in \eqref{MTIFP.Cr} and obtain
\begin{align}
 \left\| (P_N r^n)(\cdot,\tau) - (I_N \mathbf{r}^{n,1}) (\cdot) \right\|_{H^1} = \left\| \xi^n_\phi  + \eta_\phi^n \right\|_{H^1} \le \left\| \xi^n_\phi \right\|_{H^1} + \left\| \eta^n_\phi \right\|_{H^1} \label{thm2P6}.
\end{align}
Plugging the energy estimates of error functions \eqref{est_lol} and \eqref{lmNon_est} into \eqref{thm2P6}, it arrives at
\begin{equation}\label{thm2P7}
\begin{aligned}
& \left\| (P_N r^n)(\cdot,\tau) - (I_N \mathbf{r}^{n,1}) (\cdot) \right\|_{H^1}\lesssim   h^{m_0 -1}+ \frac{\tau^2}{\varepsilon^2},\\
& \left\| (P_N r^n)(\cdot,\tau) - (I_N \mathbf{r}^{n,1}) (\cdot) \right\|_{H^1}\lesssim
h^{m_0 -1} + \varepsilon^2 .
\end{aligned}
\end{equation}
Plugging \eqref{thm2P2}-\eqref{thm2P5} and \eqref{thm2P7} into \eqref{thm2P1}, we obtain the error estimates in 
\eqref{int_err2}, which directly implies \eqref{int_uerr2}. 
\end{proof}


\section{Numerical results}\label{sec:numerical}
In this section, we report numerical results to verify our error bounds and apply the MTI-FP method for numerically studying different limiting models of the KGS system \eqref{KGSndim}.

\subsection{Accuracy test}
We consider a 1D example and take the following initial data for \eqref{KGSndim}
\begin{align}
\psi_0(x) = \frac{1+i}{2}\mathrm{sech}(x^2), \quad \phi_0(x)  = \frac{1}{2} e^{-x^2},\quad \phi_1(x) = \frac{1}{\sqrt{2}}e^{-x^2}, \quad x \in {\mathbb R}, \label{nume_ini}
\end{align}
Practically, the problem is truncated on an interval $\Omega = (-16,16)$, which is large enough such that the truncation error of \eqref{psi1d}-\eqref{phi1d} to the original whole space problem \eqref{KGSndim} is negligible. To quantify numerical errors, we introduce the error functions 
\begin{align*}
&e^{\varepsilon}_{\psi}(t_n) = \left\| \psi(\cdot,t_n) - I_N \Psi^{n} \right\|_{H^1}, \quad e^{*}_{\psi}(t_n) = \max_{0 < \varepsilon \le 1} \{  e^{\varepsilon}_{\psi}(t_n)\},\\
&e^{\varepsilon}_{\phi} (t_n) = \left\| \phi(\cdot,t_n) - I_N \Phi^{n} \right\|_{H^1},\quad e^{*}_{\phi}(t_n) = \max_{0 < \varepsilon \le 1} \{  e^{\varepsilon}_{\phi}(t_n)\},\quad n \ge 0.
\end{align*}

Since the exact solution is not available, the ``exact'' solution is computed numerically by the MTI-FP method under well-prepared initial data \cite{bao2017uniformly} with very small mesh size $h = h_e =  1/32$ and time step $\tau = \tau_e = 1 \times 10^{-6}$. Tables~\ref{t1} \& \ref{t2} show the spatial errors for $\psi$ and $\phi$ at $t_n=1$ under different $\varepsilon$ and $h$ with a very small time step $\tau = \tau_e$ such that the temporal discretization error is negligible. Tables~\ref{t3} \& \ref{t4} show the temporal errors for $\psi$ and $\phi$ at $t_n=1$ under different $\varepsilon$ and $\tau$ with a very fine mesh size $h = h_e$ such that the spatial discretization error is negligible. In addition, Fig.~\ref{fig:interpolation} plots the multiscale interpolation errors $e_\psi^*(T)$ and $e_\phi^*(T)$ at $T = 1.679$ obtained via the interpolation \eqref{MTIint1}-\eqref{MTIint2} for different $\tau$ with $h = h_e$.

\begin{table}[ht!]
\caption{Spatial errors for $e^{\varepsilon}_{\psi}(t=1)$ with $\tau = 1 \times 10^{-6}$ for different $\varepsilon$ and $h$.} \label{t1}
\centering
\begin{tabular}{@{}llllll@{}} 
\hline
{$e^{\varepsilon}_{\psi}(1)$}\quad\quad \quad& {$h_0 = 1$} \quad\quad \quad & {$ h_0 / 2$} \quad\quad\quad& {$h_0 / 2^2$} \quad\quad\quad  & {$h_0 / 2^3$} \quad\quad \quad  & {$h_0 / 2^4$} \quad \quad  \\ \hline
{$\varepsilon_0=0.5$} & 2.83E-01 & 2.57E-02 & 1.44E-04 & 9.80E-09 & 2.26E-10 \\ {$\varepsilon_0/2^1$} & 2.45E-01 & 2.63E-02 & 1.42E-04 & 9.77E-09 & 3.64E-12 \\ {$\varepsilon_0/2^2$} & 2.48E-01 & 2.40E-02 & 1.56E-04 & 9.77E-09 & 6.25E-12 \\
{$\varepsilon_0 / 2^3$} & 2.49E-01 & 2.41E-02 & 1.51E-04 & 9.71E-09 & 2.54E-11 \\
{$\varepsilon_0 / 2^4$} & 2.48E-01 & 2.41E-02 & 1.51E-04 & 9.72E-09 & 1.02E-10 \\
{$\varepsilon_0 / 2^5$} & 2.48E-01 & 2.41E-02 & 1.51E-04 & 9.72E-09 & 4.10E-10 \\
{$\varepsilon_0 / 2^6$} & 2.48E-01 & 2.41E-02 & 1.51E-04 & 9.85E-09 & 1.64E-09 \\
{$\varepsilon_0 / 2^7$} & 2.48E-01 & 2.41E-02 & 1.51E-04 & 1.17E-08 & 6.55E-09 \\
{$\varepsilon_0 / 2^8$} & 2.48E-01 & 2.41E-02 & 1.51E-04 & 2.78E-08 & 2.61E-08 \\
{$\varepsilon_0 / 2^9$} & 2.48E-01 & 2.41E-02 & 1.51E-04 & 9.61E-08 & 9.56E-08 \\
\hline
\end{tabular}
\end{table}

\begin{table}[ht!]
\caption{Spatial errors for $e^{\varepsilon}_{\phi}(t=1)$ with $\tau = 1 \times 10^{-6}$ for different $\varepsilon$ and $h$.}
\label{t2}
\centering
\begin{tabular}{@{}llllll@{}}
\hline
{$e^{\varepsilon}_{\phi}(1)$}\quad\quad  \quad&
{$h_0 = 1$} \quad\quad\quad  &
{$ h_0 / 2$} \quad\quad\quad &
{$h_0 / 2^2$} \quad\quad\quad  &
{$h_0 / 2^3$} \quad\quad \quad & {$h_0 / 2^4$} \quad\quad \\
\hline
{$\varepsilon_0=0.5$} & 3.50E-02 & 3.73E-04 & 1.03E-06 & 1.14E-10 & 1.03E-10 \\
{$\varepsilon_0/2^1$} & 8.53E-02 & 1.92E-03 & 3.82E-06 & 1.31E-10 & 1.18E-10 \\
{$\varepsilon_0/2^2$} & 1.25E-01 & 2.56E-04 & 7.69E-06 & 2.03E-10 & 2.02E-10 \\
{$\varepsilon_0 / 2^3$} & 9.40E-02 & 1.98E-04 & 2.74E-06 & 2.67E-10 & 2.13E-10 \\
{$\varepsilon_0 / 2^4$} & 9.83E-02 & 3.61E-05 & 5.72E-07 & 1.49E-10 & 1.24E-10 \\
{$\varepsilon_0 / 2^5$} & 4.23E-02 & 8.50E-05 & 1.99E-07 & 1.09E-10 & 8.84E-11 \\
{$\varepsilon_0 / 2^6$} & 1.43E-01 & 7.10E-05 & 5.18E-08 & 1.09E-10 & 7.67E-11 \\
{$\varepsilon_0 / 2^7$} & 2.66E-02 & 9.50E-05 & 5.48E-09 & 9.47E-11 & 6.81E-11 \\
{$\varepsilon_0 / 2^8$} & 1.21E-01 & 8.93E-06 & 2.31E-09 & 1.10E-10 & 7.17E-11 \\
{$\varepsilon_0 / 2^9$} & 1.36E-01 & 3.99E-05 & 7.76E-10 & 1.10E-10 & 7.31E-11 \\
\hline
\end{tabular}
\end{table}

\begin{table}[ht!]
\caption{Temporal errors for $e^{\varepsilon}_{\psi}(t=1)$ with $h = 1/32$ for different $\varepsilon$ and $\tau$.}
\label{t3}
\centering
\begin{tabular}{@{}lllllll@{}}
\hline
{$e^{\varepsilon}_{\psi}(1)$}\quad\quad &
{$\tau_0=0.1$} \quad\quad&
{$\tau_0/2^2$} \quad\quad&
{$\tau_0/2^4$} \quad\quad&
{$\tau_0/2^6$} \quad\quad&
{$\tau_0/2^8$} \quad\quad&
{$\tau_0/2^{10}$} \\
\hline
{$\varepsilon_0=1$} & 1.26E-02 & 8.20E-04 & 4.87E-05 & 3.00E-06 & 1.87E-07 & 1.17E-08 \\
rate & {---} & 1.97 & 2.04 & 2.01 & 2.00 & 2.00 \\
\hline
{$\varepsilon_0/2^1$} & 1.77E-02 & 1.16E-03 & 6.93E-05 & 4.28E-06 & 2.66E-07 & 1.66E-08 \\
rate & {---} & 1.96 & 2.03 & 2.01 & 2.00 & 2.00 \\
\hline
{$\varepsilon_0/2^2$} & 1.29E-02 & 7.93E-04 & 5.05E-05 & 3.16E-06 & 1.98E-07 & 1.24E-08 \\
rate & {---} & 2.01 & 1.99 & 2.00 & 2.00 & 2.00 \\
\hline
{$\varepsilon_0 / 2^3$} & 1.42E-02 & 1.51E-03 & 8.93E-05 & 5.49E-06 & 3.42E-07 & 2.13E-08 \\
rate & {---} & 1.62 & 2.04 & 2.01 & 2.00 & 2.00 \\
\hline
{$\varepsilon_0 / 2^4$} & 2.64E-03 & 2.76E-03 & 3.57E-04 & 2.15E-05 & 1.32E-06 & 8.25E-08 \\
rate & {---} & {---} & 1.48 & 2.03 & 2.01 & 2.00 \\
\hline
{$\varepsilon_0 / 2^5$} & 3.59E-04 & 4.17E-04 & 4.11E-04 & 8.77E-05 & 5.30E-06 & 3.27E-07 \\
rate & {---} & {---} & {---} & 1.11 & 2.02 & 2.01 \\
\hline
{$\varepsilon_0 / 2^6$} & 1.44E-04 & 6.61E-05 & 9.67E-05 & 9.74E-05 & 2.18E-05 & 1.32E-06 \\
rate & {---} &{---} & {---} & {---} & 1.08 & 2.02 \\
\hline
{$\varepsilon_0 / 2^{7}$} & 2.24E-05 & 2.25E-05 & 1.62E-05 & 2.39E-05 & 2.41E-05 &  5.45E-06 \\
rate & {---} & {---} & {---} & {---} & {---} & 1.07 \\
\hline
{$\varepsilon_0 / 2^{8}$} & 1.28E-05 & 4.69E-06 & 5.49E-06 & 4.02E-06 & 5.96E-06 & 6.01E-07 \\
rate & {---} & {---} & {---} & {---} & {---} & {---} \\
\hline \hline
{$e^{*}_{\psi}(1)$} & 1.77E-02 & 2.76E-03 & 4.11E-04 & 9.74E-05 & 2.41E-05 & 6.01E-06 \\
rate & {---} & 1.34 & 1.37 & 1.04 & 1.01 & 1.00 \\
\hline
\end{tabular}
\end{table}

\begin{table}[ht!]
\caption{Temporal errors for $e^{\varepsilon}_{\phi}(t=1)$ with $h = 1/32$ for different $\varepsilon$ and $\tau$.}
\label{t4}
\centering
\begin{tabular}{@{}lllllll@{}}
\hline
{$e^{\varepsilon}_{\phi}(1)$}\quad\quad &
{$\tau_0=0.1$} \quad\quad&
{$\tau_0/2^2$} \quad\quad&
{$\tau_0/2^4$} \quad\quad&
{$\tau_0/2^6$} \quad\quad&
{$\tau_0/2^8$} \quad\quad&
{$\tau_0/2^{10}$} \\
\hline
{$\varepsilon_0=1$} & 2.18E-03 & 7.42E-05 & 3.71E-06 & 2.25E-7 & 1.41E-08 & 9.48E-10 \\
rate & {---} & 2.44 & 2.16 & 2.02 & 2.00 & 1.95 \\
\hline
{$\varepsilon_0/2^1$} & 5.94E-03 & 2.81E-04 & 1.60E-05 & 9.85E-07 & 6.14E-08 & 3.84E-09 \\
rate & {---} & 2.20 & 2.07 & 2.01 & 2.00 & 2.00 \\
\hline
{$\varepsilon_0/2^2$} & 9.56E-03 & 6.80E-04 & 3.94E-05 & 2.41E-06 & 1.50E-07 & 9.35E-09 \\
rate & {---} & 1.91 & 2.06 & 2.02 & 2.00 & 2.00 \\
\hline
{$\varepsilon_0 / 2^3$} & 3.46E-03 & 1.81E-04 & 1.28E-05 & 8.05E-07 & 5.03E-08& 3.14E-09 \\
rate & {---} & 2.13 & 1.91 & 2.00 & 2.00 & 2.00 \\
\hline
{$\varepsilon_0 / 2^4$} & 1.29E-03 & 3.58E-04 & 6.43E-06 & 4.18E-07 & 2.61E-08 & 1.64E-09 \\
rate & {---} & 0.93 & 2.90 & 1.97 & 2.00 & 2.00 \\
\hline
{$\varepsilon_0 / 2^5$} & 2.13E-04 & 8.38E-05 & 2.23E-05 & 3.40E-07 & 2.28E-08 & 1.45E-09 \\
rate & {---} & 0.67 & 0.95 & 3.02 & 1.95 & 1.99 \\
\hline
{$\varepsilon_0 / 2^6$} & 8.59E-05 & 3.54E-06 & 9.42E-07 & 2.69E-07 & 2.07E-08 & 1.34E-09 \\
rate & {---} & 2.30 & 0.96 & 0.90 & 1.85 & 1.98 \\
\hline
{$\varepsilon_0 / 2^{7}$} & 1.28E-05 & 1.77E-06 & 6.00E-08 & 1.53E-08 & 7.53E-09 & 1.30E-09 \\
rate & {---} & 1.42 & 2.44 & 0.99 & 0.51 & 1.27 \\
\hline
{$\varepsilon_0 / 2^{8}$} & 4.61E-06 & 5.56E-07 & 2.09E-08 & 1.04E-09 & 4.75E-10 & 4.01E-10 \\
rate & {---} & 1.53 & 2.37 & 2.16 & 0.57 & 0.12 \\
\hline \hline
{$e^{*}_{\phi}(1)$} & 9.56E-03 & 6.80E-04 & 3.94E-05 & 2.41E-06 & 1.50E-07 & 9.35E-09 \\
rate & {---} & 1.91 & 2.06 & 2.02 & 2.00 & 2.00 \\
\hline
\end{tabular}
\end{table}

From Tables~\ref{t1}--\ref{t4} and Fig.~\ref{fig:interpolation}, we can draw the following conclusions:
\begin{itemize}
\item The MTI-FP method \eqref{MTIFP}-\eqref{init00} achieves spectral accuracy in space when the solution is smooth, which is uniformly for $\varepsilon \in (0,1]$ (cf. Tables~\ref{t1} $\&$ \ref{t2}). It is second-order accurate in time when $0 < \tau \lesssim \varepsilon^2$ (cf. Tables~\ref{t3} $\&$ \ref{t4} upper triangle) and the error is at $O(\varepsilon^2)$ when $0 < \varepsilon \lesssim \sqrt{\tau}$ , which is independent of $\tau$ (cf. Tables~\ref{t3} $\&$ \ref{t4} lower triangle). The method obtains uniformly first-order accuracy in time for $\varepsilon \in (0,1]$ (cf. Tables~\ref{t3} $\&$ \ref{t4} last row). In addition, the practical temporal uniform convergence rate of the MTI-FP method for $\phi$ is second order, better than the theoretical result which is limited by the nonlinear coupling with $\psi$ and analysis techniques. All these numerical results confirm our error bounds in Theorem~\ref{theorem1}.
\item The multiscale interpolation \eqref{MTIint1}-\eqref{MTIint2} achieves first-order temporal accuracy for any $t >0$, which is uniform with respect to $\varepsilon \in (0,1]$. The results show that our error bounds in Theorem~\ref{theorem2} is sharp.
\end{itemize} 

\begin{figure}[ht!]
\centerline{\includegraphics[width=11cm,height=5cm]{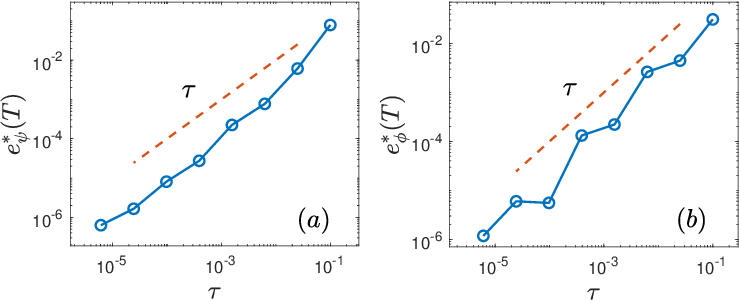}}
\caption{Multiscale interpolation error of $e_\psi^*(T)$ (left) and $e_\phi^*(T)$ (right) at $T = 1.679$ under smooth initial data \eqref{nume_ini}.}
\label{fig:interpolation}
\end{figure}


\subsection{Convergence rates from KGS to its limiting models}
In this subsection, we apply the proposed MTI-FP method \eqref{MTIFP}-\eqref{init00} to numerically study the
convergence rates from the KGS system \eqref{KGSndim} to its limiting model---the Schr\"odinger equations \eqref{lim}-\eqref{lim_initial}, as well as the semi-limiting model---the Schr\"odinger equations with wave operator \eqref{slim}-\eqref{slim_initial} under different initial data $\gamma(\bx)$. 

We consider the 1D example and denote by $(\psi,\phi)$ the solution of the KGS equations \eqref{psi1d}-\eqref{phi1d}, which is numerically solved by the proposed MTI-FP method on a bounded domain $\Omega=(-16,16)$ with a very fine mesh $h = 1/32$ and a small time step $\tau = 1 \times 10^{-5}$. Let $(\psi_{_{\rm S}}, v_{_{\rm S}})$ be the solution of the Schr\"odinger equations \eqref{lim}-\eqref{lim_initial} and $(\psi_{_{\rm SW}}, v_{_{\rm SW}})$ be the solution of the Schr\"odinger equations with wave operator \eqref{slim}-\eqref{slim_initial}, which are both obtained numerically by the exponential wave integrator Fourier pseudospectral (EWI-FP) method \cite{bao2024optimal,hochbruck2010exponential} under the same set-up with $\lambda =\mu =1$. Based on the asymptotic performance \eqref{ansatzRd}, define the solution $\phi$ from the two limiting models as
\begin{align*}
&\phi_{_{\rm S}}(x,t) = e^{it/\varepsilon^2} v_{_{\rm S}}(x,t) + e^{-it/\varepsilon^2} \overline{v_{_{\rm S}}(x,t)},\qquad t \ge 0, \\
& \phi_{_{\rm SW}}(x,t) = e^{it/\varepsilon^2} v_{_{\rm SW}}(x,t) + e^{-it/\varepsilon^2} \overline{v_{_{\rm SW}}(x,t)}.
\end{align*} 
Introduce the following error functions 
\begin{align*}
    & e_{_{\rm S}}(t)  = e_{_{\rm S}}^\psi(t)  + e_{_{\rm S}}^\phi (t), \qquad   e_{_{\rm SW}}(t)  = e_{_{\rm SW}}^\psi(t)  + e_{_{\rm SW}}^\phi (t), \qquad t \ge 0,
\end{align*}
where
\begin{align*}
& e_{_{\rm S}}^\psi(t) := \left\| \psi(\cdot,t) - \psi_{_{\rm S}}(\cdot,t) \right\|_{H^1}, &&  e_{_{\rm S}}^\phi(t) := \left\| \phi(\cdot,t) - \phi_{_{\rm S}}(\cdot,t) \right\|_{H^1},\\
& e_{_{\rm SW}}^\psi(t) := \left\| \psi(\cdot,t) - \psi_{_{\rm SW}}(\cdot,t) \right\|_{H^1}, &&  e_{_{\rm SW}}^\phi(t) := \left\| \phi(\cdot,t) - \phi_{_{\rm SW}}(\cdot,t) \right\|_{H^1}.
\end{align*}

Fig.~\ref{fig:lim_smooth} plots the errors $e_{_{\rm S}}(t)$ with smooth initial data \eqref{nume_ini} and Fig.~\ref{fig:semi_smooth} depicts the errors $e_{_{\rm SW}}(t)$ under the same set-up with different choices of $\gamma(x)$ in \eqref{slim_initial}, i.e., (a) homogeneous initial data and (b) well-prepared initial data. In addition, we also study the convergence rates of the KGS system \eqref{KGSndim} with $d = \lambda = \mu = 1$ when the initial data \eqref{KGSini} is not smooth. For this purpose, we consider the following non-smooth initial data:
\begin{align}
\psi_0(x) = \frac{1 + i}{2} x^m |x| \mathrm{sech}(x^2),\quad \phi_0(x) =  \frac{1}{2} x^m |x| e^{-x^2}, \quad \phi_1(x) = \frac{1}{\sqrt{2}} x^m |x| e^{-x^2}.\label{non_smooth}
\end{align}
Figs.~\ref{fig:limi_H3} \& \ref{fig:limi_H4} depict the errors $e_{_{\rm S}}^\psi(t)$ and $e_{_{\rm S}}^\phi(t)$ when $m=3,4$ in the non-smooth initial data \eqref{non_smooth} and Figs.~\ref{fig:semi_H2} \& \ref{fig:semi_H3} show the evolution of $e_{_{\rm SW}}^\psi(t)$ and $e_{_{\rm SW}}^\phi(t)$ when $m = 2,3$ in \eqref{non_smooth}.

\begin{figure}[ht!]
\centerline{\includegraphics[width=10cm,height=4.5cm]{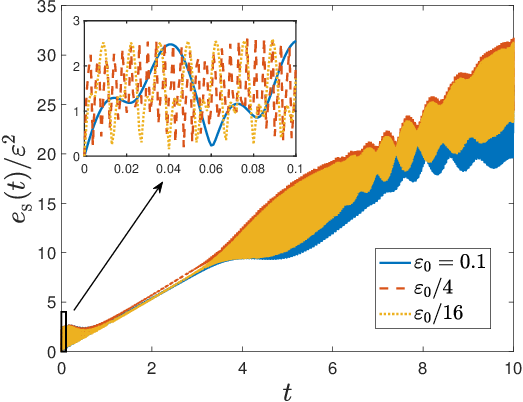}}
\caption{Convergence of the KGS \eqref{KGSndim} to its limiting model \eqref{lim} in 1D with smooth initial data \eqref{nume_ini}.}
\label{fig:lim_smooth}
\end{figure}

\begin{figure}[ht!]
\centerline{\includegraphics[width=11cm,height=4.5cm]{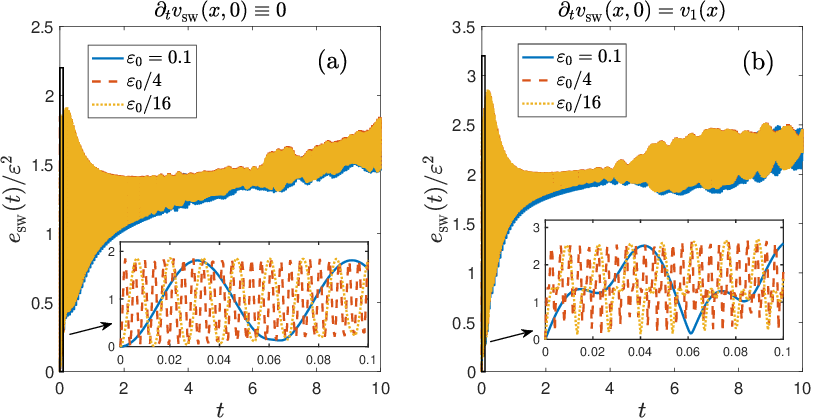}}
\caption{Convergence of the KGS \eqref{KGSndim} to its semi-limiting model \eqref{slim} in 1D with smooth initial data \eqref{nume_ini}, under different $\gamma$ in \eqref{slim_initial}: (a) $\gamma(x) \equiv 0$, and (b) $\gamma(x)$ is taken as the well-prepared initial data $v_1(x)$ in \eqref{wellprepared}.}
\label{fig:semi_smooth}
\end{figure}

\begin{figure}[ht!]
\centerline{\includegraphics[width=11cm,height=5cm]{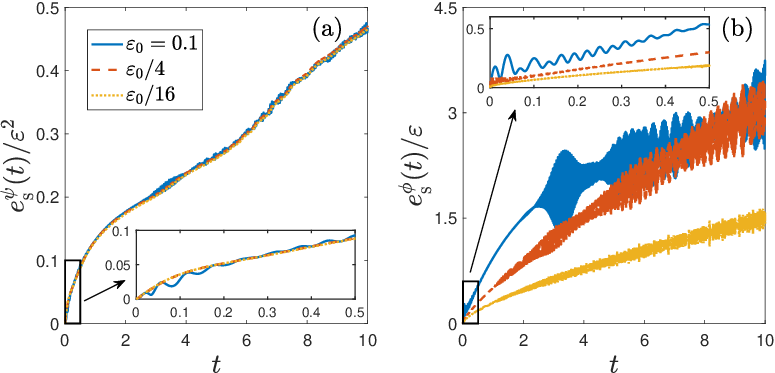}}
\caption{Convergence of the KGS \eqref{KGSndim} to its limiting model \eqref{lim} in 1D with non-smooth initial data $m = 2$ in \eqref{non_smooth}.}
\label{fig:limi_H3}
\end{figure}

\begin{figure}[ht!]
\centerline{\includegraphics[width=11cm,height=5cm]{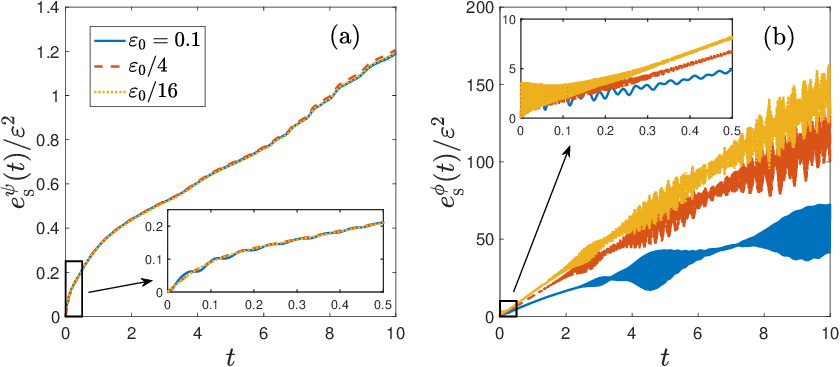}}
\caption{Convergence of the KGS \eqref{KGSndim} to its limiting model \eqref{lim} in 1D with non-smooth initial data $m = 3$ in \eqref{non_smooth}.}
\label{fig:limi_H4}
\end{figure}

\begin{figure}[ht!]
\centerline{\includegraphics[width=11cm,height=7cm]{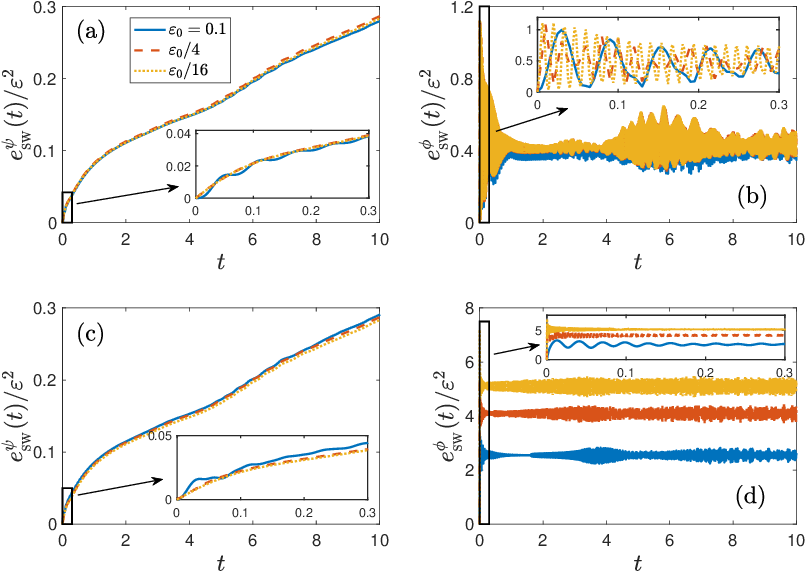}}
\caption{Convergence of the KGS \eqref{KGSndim} to its semi-limiting model \eqref{slim} in 1D with non-smooth initial data $m = 1$ in \eqref{non_smooth}, under different $\gamma$: (a)-(b) $\gamma(x) \equiv 0$ (upper), and (c)-(d) $\gamma(x)$ is taken as the well-prepared initial data $v_1(x)$ in \eqref{wellprepared} (lower).}
\label{fig:semi_H2}
\end{figure}

\begin{figure}[ht!]
\centerline{\includegraphics[width=11cm,height=7cm]{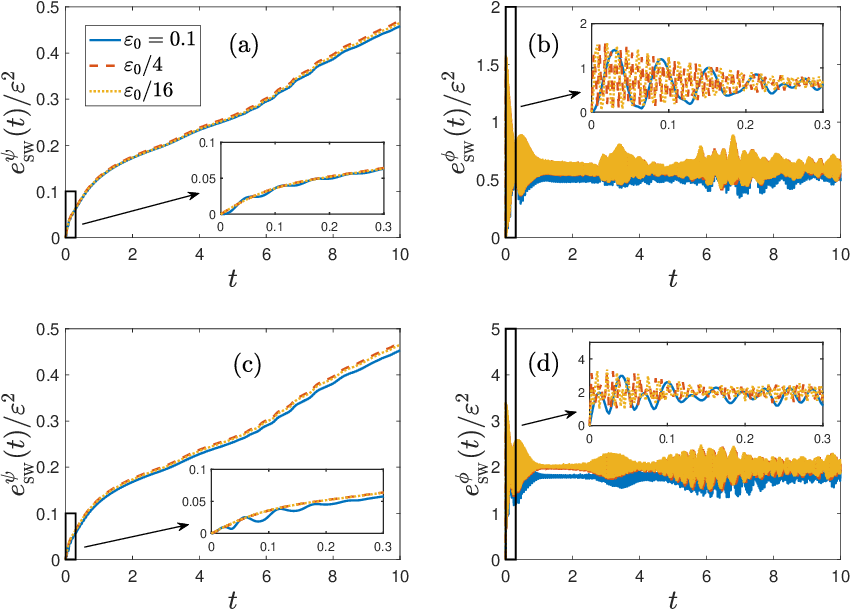}}
\caption{Convergence of the KGS \eqref{KGSndim} to its semi-limiting model \eqref{slim} in 1D with non-smooth initial data $m = 2$ in \eqref{non_smooth}, under different $\gamma$: (a)-(b) $\gamma(x) \equiv 0$ (upper), and (c)-(d) $\gamma(x)$ is taken as the well-prepared initial data $v_1(x)$ in \eqref{wellprepared} (lower).}
\label{fig:semi_H3}
\end{figure}


From Figs.~\ref{fig:lim_smooth}--\ref{fig:semi_H3}, we can draw the following conclusions:
\begin{itemize}
\item The solution of the KGS system \eqref{KGSndim} converges to that of the limiting model---the Schr\"odinger equations \eqref{lim} quadratically in $\varepsilon$ (not uniform in time), provided that the initial data \eqref{KGSini} satisfies: $\psi_0$, $\phi_0$ and $\phi_1 \in H^4(\mathbb{R}^d)$, i.e., for $0 \le t \le T$, 
\begin{align*}
    \left\| \psi(\cdot,t) - \psi_{_{\rm S}}(\cdot,t) \right\|_{H^1(\Omega)} + \left\| \phi(\cdot,t) - \phi_{_{\rm S}}(\cdot,t) \right\|_{H^1(\Omega)} \le (C_1 + C_2 T) \varepsilon^2,
\end{align*}
where $C_1$, $C_2>0$ are two constants independent of $\varepsilon$ and $T$ (cf. Figs.~\ref{fig:lim_smooth} $\&$ \ref{fig:limi_H4}). On the contrary, if under weaker regularity assumption, i.e., $\psi_0$, $\phi_0$ and $\phi_1 \in H^3$, the convergence rate of $\phi$ in $\varepsilon$ collapses to linear rate while that of $\psi$ remains the same, i.e.,
\begin{align*}
& \left\| \psi(\cdot,t) - \psi_{_{\rm S}}(\cdot,t) \right\|_{H^1(\Omega)} \le (C_1 + C_2 T) \varepsilon^2,\quad 0 \le t \le T, \\
& \left\| \phi(\cdot,t) - \phi_{_{\rm S}}(\cdot,t) \right\|_{H^1(\Omega)} \le (C_1 + C_2 T) \varepsilon, 
\end{align*}
where $C_3$, $C_4>0$ are two constants independent of $\varepsilon$ and $T$ (cf. Fig.~\ref{fig:limi_H3}).
\item The solution of the KGS system \eqref{KGSndim} converges to that of the semi-limiting model---the Schr\"odinger equations with wave operator \eqref{slim} quadratically in $\varepsilon$ (not uniform in time for $\psi$ and uniform in time for $\phi$), provided that the initial data \eqref{KGSini} satisfies: $\psi_0$, $\phi_0$ and $\phi_1 \in H^2(\mathbb{R}^d)$, i.e.,
\begin{align*}
& \left\| \psi(\cdot,t) - \psi_{_{\rm SW}}(\cdot,t) \right\|_{H^1(\Omega)} \le (C_1 + C_2 T) \varepsilon^2,\quad 0 \le t \le T, \\
& \left\| \phi(\cdot,t) - \phi_{_{\rm SW}}(\cdot,t) \right\|_{H^1(\Omega)} \le C_0 \varepsilon^2, 
\end{align*}
where $C_0$, $C_1$, $C_2>0$ are three constants independent of $\varepsilon$ and $T$ (cf. Figs.~\ref{fig:semi_smooth} $\&$ \ref{fig:semi_H2} $\&$ \ref{fig:semi_H3}).
\item For different choices of $\gamma(\bx)$ in \eqref{slim_initial}, the KGS system \eqref{KGSndim}-\eqref{KGSini} converges to its semi-limiting model---the Schr\"odinger equations with wave operator \eqref{slim} and the constant $C_0$ depends on $\gamma$. In general, when $\gamma(\bx) \equiv0$, the constant $C_0$ becomes the minimum among different choices of $\gamma$, which implies better asymptotic performance in $\varepsilon$ (cf. Figs.~\ref{fig:semi_H2} $\&$ \ref{fig:semi_H3}).
\end{itemize}


\section{Conclusion}\label{sec:conclusion}
In this paper, we proposed a simplified uniformly accurate multiscale time integrator Fourier pseudospectral (MTI-FP) method for the Klein-Gordon-Schr\"odinger (KGS) equation in the nonrelativistic limit regime with a dimensionless parameter $0 < \varepsilon \le 1$. This method was designed based on two ingredients: (i) a multiscale decomposition by frequency in each time interval with simplified transmission conditions, and (ii) an exponential wave integrator for temporal discretization and a Fourier pseudospectral method for spatial discretization. We established optimal error bounds for the MTI-FP method, which is uniformly first-order accurate in time. In addition, a multiscale interpolation in time was presented to obtain a uniformly accurate approximation of the solution at any time $t \ge 0$ by adopting a linear interpolation of the micro-variables within each time interval. Numerical results were reported to confirm our error bounds and to demonstrate the accuracy and efficiency of this method as well as to show the convergence rates of the KGS to its different limiting models.

\section*{Acknowledgment}
The work of the first author was partially supported by National Natural Science Foundation of China (No. 12401539). The work of the second author was funded by the Fog Research Institute under contract No.~FRI-454.

\bibliographystyle{plain}
\bibliography{references}

\end{document}